\newtheorem{theorem}{Theorem}[section]
\newtheorem{lemma}[theorem]{Lemma}
\newtheorem{definition}[theorem]{Definition}
\newtheorem{remark}[theorem]{Remark}
\def\cF{\mathcal{F}}
\def\cH{\mathcal{H}}
\def\cS{\mathcal{S}}
\def\bC{\mathbb{C}}
\def\bE{\mathbb{E}}
\def\bN{\mathbb{N}}
\def\bP{\mathbb{P}}
\def\bR{\mathbb{R}}
\begin{document}

\title{Parabolic Anderson model with rough initial condition: continuity in law of the solution}

\author{Xiao Liang\footnote{University of Ottawa, Department of Mathematics and Statistics, 150 Louis Pasteur Private, Ottawa, Ontario, K1G 0P8, Canada. E-mail address: tlian081@uottawa.ca}
}

\date{December, 2023}
\maketitle

\begin{abstract}
\noindent The parabolic Anderson model (PAM) is one of the most interesting and challenging SPDEs related to various physical phenomena, and can be described mathematically as a stochastic heat equation driven by linear multiplicative noise. In this paper, we consider PAM with initial condition given by a signed Borel measure on $\mathbb{R}^d$. The forcing term under investigation is examined in two cases: (i) the regular noise, with the spatial covariance given by the Riesz kernel of order $\alpha \in (0,d)$ in spatial dimension $d\geq 1$; (ii) the rough noise, which is a fractional noise in space with Hurst index $H<1/2$ and $d=1$. In both cases, the noise is assumed to be colored in time and we consider a general initial condition. The objective of this article is to show that the solution is continuous in law with respect to the spatial noise parameter. The similar problem for the constant initial condition has been recently studied in \cite{BL2023}. 
\end{abstract}

\noindent {\em MSC 2020:} Primary 60H15; Secondary 60G60, 60H07

\vspace{1mm}

\noindent {\em Keywords:} stochastic partial differential equations, random fields, rough noise, rough initial condition

\section{Introduction and Main Results}
In this article, we consider the following stochastic heat equation:
\begin{equation}
\label{PAM}
\begin{cases}
    \dfrac{\partial u}{\partial t}(t,x) = \dfrac{1}{2} \dfrac{\partial^2 u}{\partial x^2}(t,x) + u(t,x)\dot{W}(t,x) , \quad t > 0, x \in \bR^d\\[0.8em]
    u(0,\cdot) = u_0(\cdot), 
  \end{cases}
\end{equation}
with initial condition given by a signed Borel measure $u_0$ on $\bR^d$ such that
\begin{equation}
\label{eqn1-20230522-11:45am}
\int_{\bR^d} e^{-a|x|^2} | u_0 | (\mathrm{d}x) < \infty, \text{ for all } a > 0.
\end{equation}
This general initial condition, known as ``rough initial condition'', was introduced in \cite{CD2015}, and has been considered in the analysis of the stochastic heat equation in several other papers; see e.g. \cite{BC2018, CH2019, CK2019}. Here $|u_0| := u_{0, +} + u_{0, -}$, where $u_0 = u_{0, +} - u_{0, -}$ is the Jordan decomposition and $u_{0, \pm}$ are two non-negative Borel measure with disjoint support.

We assume that $W$ is a space-time homogeneous Gaussian noise. More precisely, $W=\{W(\varphi);\varphi \in C_0^{\infty}(\bR_{+} \times \bR^d)\}$ is a zero-mean Gaussian process, defined on a complete probability space $(\Omega,\cF,\bP)$, with the following covariance structure:
\begin{equation}
\label{cov}
\bE[W(\varphi)W(\psi)]=\int_{\bR_{+}^2 \times \bR^d} \cF \varphi(t,\cdot)(\xi) \overline{\cF \psi(s,\cdot)(\xi)}\gamma_0(t-s)dtds \mu(d\xi)=:\langle \varphi,\psi \rangle_{\cH},
\end{equation}
where $\cF \varphi(t,\cdot)(\xi)=\int_{\bR^d}e^{-i \xi \cdot x}\varphi(t,x)dx$ is the Fourier transform of the function $\varphi(t,\cdot)$, $\xi \cdot x$ is the Euclidean product in $\bR^d$, and $\gamma_0$ is a non-negative and non-negative definite, i.e. for any $\phi \in \mathcal{S}(\bR)$, 
$$\int_{\bR} (\phi \ast \widetilde{\phi})(t) \gamma_0(t) dt \ge 0.$$
We denote by $\mathcal{H}$ the Hilbert space defined as the completion of $C_0^{\infty}(\bR_{+} \times \bR^d)\}$ with respect to the inner product $\langle \cdot,\cdot \rangle_{\cH}$. In this setting, the noise $W$ can be extended to an isonormal Gaussian process $\{ W(\varphi) ; \varphi \in \cH \}$, and the solution can be defined using Malliavin calculus. 

By the Bochner-Schwartz theorem, there exists a tempered measure $\mu_0$ on $\bR$ such that $\gamma_0$ is the Fourier transform of $\mu_0$ in the space $\cS_{\bC}'(\bR)$ of tempered distributions on $\bR$. Then,
\begin{equation}
\label{Fourier-phi}
\int_{\bR^2}\phi(t)\phi(s)\gamma_0(t-s)dtds =\int_{\bR}|\cF \phi(\tau)|^2 \mu_0(d\tau),
\end{equation}
for any $\phi \in \cS_{\bC}(\bR)$, where $\cS_{\bC}(\bR)$ is the space of $\bC$-valued rapidly decreasing $C^{\infty}$-functions on $\bR$, and $\cF \phi$ is the Fourier transform of $\phi$, given by $\cF \phi(\tau)=\int_{\bR}e^{-i \tau t}\phi(t)dt$ for all $\tau \in \bR$. We assume that
there exists a function $g_0:\bR \to [0,\infty]$ such that
\begin{equation}
\label{cond-mu0}
\mu_0(d\tau)=g_0(\tau) d\tau.
\end{equation}
We will be interested in the temporal covariance function  
\begin{equation}
\label{temporal-cov}
\gamma_0(t) = \alpha_{H_0} |t|^{2H_0 - 2}
\end{equation}
with $H_0 \in (1/2, 1)$, in which case $g_0(\tau)=c_{H_0}|\tau|^{1-2H_0}$, with $\alpha_{H_0} = H_0(2H_0 - 1)$ and  
\begin{equation}
\label{constant_cH}
c_{H_0}=\frac{\Gamma(2H_0+1)\sin(\pi H_0)}{2\pi}.
\end{equation}
In this case, $\gamma_0$, we say that the noise $W$ is fractional in time with index $H_0 > 1/2$. 

Similarly, there exists a tempered measure $\mu$ on $\bR^d$ such that $\gamma$ is the Fourier transform of $\mu$ in $\cS'_{\bC}(\bR)$, i.e. for all $\varphi \in \cS_{\bC}(\bR^d)$, 
\begin{equation}
\label{Fourier-varphi}
\int_{\bR^{2d}}\varphi(x) \varphi(y)\gamma(x-y)dxdy =\int_{\bR^d}|\cF \varphi(\xi)|^2 \mu(d\xi).
\end{equation}
The spatial correlation pattern of the noise is defined by this spatial spectral measure $\mu$. Typically, this measure is assumed to be tempered, non-negative, and non-negative definite. We will consider two cases:
\[
\mu(d\xi)=
\left\{
\begin{array}{ll}
|\xi|^{-\alpha}d\xi  & \mbox{with $\alpha \in (0,d)$ and $d\geq 1$ ({\em Case I: the regular case})} \\
c_H|\xi|^{1-2H}d\xi & \mbox{with $H \in (0,1/2)$ and $d=1$ ({\em Case II: the rough case})}
\end{array} \right.
\]
In Case II, the constant $c_H$ is defined in \eqref{constant_cH}. 

We are now ready to give the definition of the solution to equation \eqref{PAM}.
\begin{definition}
We say that a process $u = \{u(t,x), (t,x) \in \bR_{+} \times \bR^d \}$ is a {\bf (mild Skorohod) solution} of equation \eqref{PAM} if for any $(t,x) \in \bR_{+} \times \bR^d$, $u(t,x)$ is $\mathcal{F}_t$-measurable, $\mathbb{E}|u(t,x)|^2 < \infty$ and solves the following equation:
$$u(t,x)=w(t,x)+\int_0^t \int_{\bR^d}G_{t-s}(x-y)u(s,y) W(\delta s, \delta y),$$
where the stochastic integral is interpreted in the Skorohod sense and $w(t,x)$ is the solution of the homogeneous heat equation with initial data $u_0$:
\begin{equation}
\label{defn-of-w}
w(t,x) = \int_{\bR^d} G_t(x-y) u_0(\mathrm{d}y).
\end{equation}
\end{definition}
Note that if $u_0 \ge 0$, then \eqref{eqn1-20230522-11:45am} is the necessary and sufficient condition for $w(t,x)$ to be finite. By Lemma B.2 of \cite{BC2018}, we know that $w(t,x)$ is continuous on $\bR_{+} \times \bR^d$. Hence $w(t,x)$ is uniformly bounded in any compact subset of $\bR_{+} \times \bR^d$. We recall that $G_t(x)$ is the fundamental solution of the heat equation in $\bR^d$ with
$$G_t(x)=\frac{1}{(2\pi t)^{d/2}}\exp\left(-\frac{|x|^2}{2t} \right),$$
and the Fourier transform of $G_t$ in any dimension $d$ is given by $\cF G_t(\xi)= e^{-t|\xi|^2/2}$. Moreover, we note that $|w(t,x)| \le w_{+}(t,x)$, where 
\begin{equation}
w_{+}(t,x) =  \int_{\bR^d} G_t(x-y) |u_0|(\mathrm{d}y),
\end{equation}
and it can be shown that condition \eqref{eqn1-20230522-11:45am} is equivalent to
$$w_{+}(t,x) < \infty \quad \text{ for all } t > 0 \text{ and } x \in \bR^d.$$

The solution to equation \eqref{PAM} is understood in {\bf Skorohod} sense. We recall that $\delta : \text{Dom}(\delta) \subset L^2(\Omega; \cH) \to L^2(\Omega)$ is the divergence operator with respect to $W$ which is defined as the adjoint of the Malliavin derivative $D$ with respect to $W$. If $u \in \text{Dom}(\delta)$, we use the notation
$$\delta(u) = \int_{0}^{\infty} \int_{\bR^d} u(t,x) W(\delta t, \delta x)$$
and we say $\delta(u)$ is the {\em Skorohod integral} of $u$ with respect to $W$. Further elaboration on the subject of Malliavin calculus is available in \cite{nualart06}. 

Using the methodology introduced in \cite{hu-nualart09}, we know that if it exists, the solution of equation \eqref{PAM} has the series expansion:
\begin{equation}
\label{series}
u(t,x)=w(t,x)+\sum_{n\geq 1}I_n\big(f_{t,x,n}\big) \quad \mbox{in} \quad L^2(\Omega),
\end{equation}
the terms of these series being orthogonal in $L^2(\Omega)$.
Here $I_n$ is the multiple Wiener integral of order $n$ with respect to $W$, and $f_{t,x,n}$ is given by: for all $0<t_1<\ldots<t_n<t$,
\begin{align*}
f_{t,x,n}(t_1,x_1,\ldots,t_n,x_n)&=G_{t-t_n}(x-x_n)\ldots G_{t_2-t_1}(x_2-x_1) w(t_1, x_1)\\
&= \int_{\bR^d} G_{t-t_n}(x - x_n) \cdots G_{t_2-t_1}(x_2 - x_1) G_{t_1}(x_1-x_0) u_0(\mathrm{d}x_0)
\end{align*}
where the second expression of this kernel is obtained by \eqref{defn-of-w}. By the orthogonality of the terms in this series, the necessary and sufficient condition for the existence of solution is
\[
\sum_{n\geq 1}\bE|I_n\big(f_{t,x,n}\big) |^2 =\sum_{n\geq 1}n! \, \|\widetilde{f}_{t,x,n}\|_{\cH^{\otimes n}}^2<\infty.
\]
where
$\widetilde{f}$ is the symmetrization of the function $f$, defined by:
\[
\widetilde{f}(t_1,x_1,\ldots,t_n,x_n)=\frac{1}{n!}\sum_{\rho \in S_n}f(t_{\rho(1)},x_{\rho(1)},\ldots,t_{\rho(n)},x_{\rho(n)}),
\]
and $S_n$ is the set of permutations of $\{1,\ldots,n\}$. 

In what follows, we will need the following result which gives the explicit expression of the Fourier transform of the kernel $f_{t,x,n}$ in the space variables.  
\begin{lemma}[Lemma 2.5 of \cite{BC2018}]
\label{lem1-20230317-4:21pm}
For any $0 < t_1 < \ldots < t_n < t = t_{n+1}$ and for any $\xi_1, \ldots, \xi_n \in \bR^d$, we have 
\begin{align*}
&\mathcal{F}f_{t,x,n}(t_1, \cdot,\ldots, t_n, \cdot)(\xi_1, \ldots, \xi_n)\\
&\qquad = \prod_{k = 1}^{n} \exp \bigg\{ -\frac{1}{2} \frac{t_{k+1} - t_k}{t_k t_{k+1}} \Big| \sum_{j = 1}^{k} t_j \xi_j \Big|^2 \bigg\}
\exp \bigg\{ -\frac{i}{t} \Big( \sum_{j = 1}^{n} t_j \xi_j  \Big) \cdot x \bigg\}\\
&\qquad \quad \times \int_{\bR^d} \exp\Big\{ -i \Big[ \sum_{j = 1}^{n} \Big( 1 - \frac{t_j}{t} \Big) \xi_j \Big] \cdot x_0  \Big\} G_t(x-x_0) u_0(\mathrm{d}x_0).
\end{align*}
\end{lemma}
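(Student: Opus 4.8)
The plan is induction on $n$, after isolating the Gaussian computation sitting underneath the integral against $u_0$. Write $t_0 = 0$, $t_{n+1} = t$, $x_{n+1}=x$, and for a fixed $x_0 \in \bR^d$ set
\[
p_n^{x_0}(x_1,\dots,x_n) = \prod_{k=0}^{n} G_{t_{k+1}-t_k}(x_{k+1}-x_k),
\]
so that $f_{t,x,n}(t_1,x_1,\dots,t_n,x_n) = \int_{\bR^d} p_n^{x_0}(x_1,\dots,x_n)\, u_0(\mathrm{d}x_0)$. The Chapman--Kolmogorov identity for the heat kernel gives $\int_{\bR^{dn}} p_n^{x_0}\,\mathrm{d}x_1\cdots\mathrm{d}x_n = G_t(x-x_0)$, hence $\int_{\bR^{dn}}\int_{\bR^d} p_n^{x_0}\, |u_0|(\mathrm{d}x_0)\,\mathrm{d}x_1\cdots\mathrm{d}x_n = w_{+}(t,x) < \infty$ by \eqref{eqn1-20230522-11:45am}. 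By Fubini's theorem this permits pulling the spatial Fourier transform through the $u_0$--integral, so it is enough to prove the asserted identity for $\mathcal{F}_{x_1,\dots,x_n} p_n^{x_0}$ with the factor $\int_{\bR^d}(\cdots)\, u_0(\mathrm{d}x_0)$ replaced by the single term $(\cdots)\, G_t(x-x_0)$.

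The computation that does all the work is the twisted convolution of two heat kernels: for $a,b>0$ and $\eta,z\in\bR^d$,
\[
\int_{\bR^d} e^{-i\eta\cdot y}\, G_a(z-y)\, G_b(y)\,\mathrm{d}y = G_{a+b}(z)\, \exp\Big\{ -i\,\tfrac{b}{a+b}\,\eta\cdot z \Big\}\, \exp\Big\{ -\tfrac{ab}{2(a+b)}\,|\eta|^2 \Big\},
\]
obtained by completing the square in $y$ in the product of the two Gaussian exponents and evaluating the resulting Gaussian integral. For $n=1$ we have $p_1^{x_0}(x_1) = G_{t-t_1}(x-x_1)\,G_{t_1}(x_1-x_0)$; translating $x_1 \mapsto x_1 + x_0$ and applying the above with $a = t-t_1$, $b=t_1$, $z = x-x_0$, $\eta=\xi_1$ (and grouping the $x_0$--exponentials) yields the claim for $n=1$, after noting $\tfrac{t_1(t-t_1)}{t} = t_1^{2}\cdot\tfrac{t_2-t_1}{t_1 t_2}$ with $t_2=t$.

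For the inductive step I use the chain recursion $p_n^{x_0}(x_1,\dots,x_n) = G_{t-t_n}(x-x_n)\, p_{n-1}^{x_0}(x_1,\dots,x_{n-1})$, in which the endpoint data of $p_{n-1}$ is $(t_n,x_n)$ rather than $(t,x)$. Taking the Fourier transform of $p_{n-1}^{x_0}$ in $x_1,\dots,x_{n-1}$ by the inductive hypothesis produces, besides the product $\prod_{k=1}^{n-1}\exp\{-\tfrac12\tfrac{t_{k+1}-t_k}{t_k t_{k+1}}|\tau_k|^2\}$ with $\tau_k := \sum_{j=1}^{k} t_j\xi_j$, an $x_n$--dependent factor $e^{-\frac{i}{t_n}(\sum_{j=1}^{n-1} t_j\xi_j)\cdot x_n}\,G_{t_n}(x_n-x_0)$ and an $x_0$--dependent factor $e^{-i[\sum_{j=1}^{n-1}(1-t_j/t_n)\xi_j]\cdot x_0}$. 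The remaining Fourier transform in $x_n$ --- multiply by $G_{t-t_n}(x-x_n)\,e^{-i\xi_n\cdot x_n}$ and integrate $\mathrm{d}x_n$ --- is again an instance of the twisted--convolution identity, now with $a=t-t_n$, $b=t_n$, $z=x-x_0$ (after $x_n\mapsto x_n+x_0$) and twist vector $\eta = \xi_n + \tfrac1{t_n}\sum_{j=1}^{n-1}t_j\xi_j = \tfrac1{t_n}\tau_n$. Substituting $\eta=\tfrac1{t_n}\tau_n$ back, the $|\eta|^2$ term becomes $\tfrac12\tfrac{t_{n+1}-t_n}{t_n t_{n+1}}|\tau_n|^2$ and completes the product up to $k=n$; the $x$--linear terms collapse to $-\tfrac{i}{t}(\sum_{j\le n}t_j\xi_j)\cdot x$; and the $x_0$--linear exponent --- which now collects a contribution from the inductive hypothesis \emph{and} from the new integration --- simplifies term by term in $j$, via $(1-\tfrac{t_j}{t_n})+(\tfrac{t_j}{t_n}-\tfrac{t_j}{t}) = 1-\tfrac{t_j}{t}$, to $-i[\sum_{j\le n}(1-t_j/t)\xi_j]\cdot x_0$. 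This is exactly the claim for $n$. The only genuine work is this exponent bookkeeping in the inductive step, and the point most likely to cause trouble is the $x_0$--linear term, since it is fed from two sources; everything else is the single Gaussian integral applied twice.

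Alternatively, one can avoid the induction entirely: $p_n^{x_0} = G_t(x-x_0)\,q(x_1,\dots,x_n)$, where $q$ is the joint density at times $t_1,\dots,t_n$ of the Brownian bridge pinned to $x_0$ at time $0$ and to $x$ at time $t$, so that $\mathcal{F}_{x_1,\dots,x_n}p_n^{x_0}$ equals $G_t(x-x_0)$ times the characteristic function of that Gaussian vector. Its mean $x_0+\tfrac{t_j}{t}(x-x_0)$ at time $t_j$ produces the two linear exponentials, and its covariance $\tfrac{(t_j\wedge t_k)(t-t_j\vee t_k)}{t}$ produces the quadratic exponential, the latter after the elementary identity $\sum_{j,k=1}^{n}(\xi_j\cdot\xi_k)\tfrac{(t_j\wedge t_k)(t-t_j\vee t_k)}{t} = \sum_{k=1}^{n}\tfrac{t_{k+1}-t_k}{t_k t_{k+1}}|\tau_k|^2$, proved by exchanging the order of summation and telescoping $\sum_{k\ge m}\big(\tfrac1{t_k}-\tfrac1{t_{k+1}}\big) = \tfrac1{t_m}-\tfrac1{t}$.
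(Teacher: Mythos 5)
Your proof is correct: the twisted-convolution identity $\int e^{-i\eta\cdot y}G_a(z-y)G_b(y)\,dy=G_{a+b}(z)e^{-i\frac{b}{a+b}\eta\cdot z}e^{-\frac{ab}{2(a+b)}|\eta|^2}$ checks out, the Fubini step is properly justified by $w_+(t,x)<\infty$, and the exponent bookkeeping in the inductive step (including the two-source $x_0$-linear term via $(1-\tfrac{t_j}{t_n})+(\tfrac{t_j}{t_n}-\tfrac{t_j}{t})=1-\tfrac{t_j}{t}$) is accurate. The paper itself imports this statement from Lemma 2.5 of \cite{BC2018} without proof, and your induction exploiting the semigroup structure of the heat kernel is essentially the argument used there, so no comparison beyond that is needed; the Brownian-bridge reformulation you sketch is a valid and cleaner alternative.
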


In Case I with $\gamma_0$ arbitrary, a sufficient condition for the existence of the solution the heat equation \eqref{PAM} is {\em Dalang's condition}:
\[
\int_{\bR^d} \frac{1}{1+|\xi|^2}\mu(d\xi)<\infty,
\]
which is equivalent to
\begin{equation}
\label{dalang-cond}
d-\alpha<2.
\end{equation}
(see \cite{BC14}). In Case II with temporal covariance $\gamma_0$ given by \eqref{temporal-cov}, a sufficient condition for the existence of the solution is $H_0+H>3/4$ (see \cite{hu-le19}). We note that the authors of \cite{hu-le19} considered a slightly different initial condition compared to the present paper. 


\medskip


The following theorems are the main results of the present article.

\begin{theorem}[The Regular Case]
\label{main-th1}
Let $\{W^{\alpha}\}_{\alpha \in (0,d)}$ be a family of zero-mean Gaussian processes with covariance \eqref{cov} in which $\gamma_0$ is chosen such that its corresponding measure $\mu_0$ satisfies \eqref{cond-mu0}, and the measure $\mu$ is given by:
\begin{equation}
\label{def-mu1}
\mu(d\xi)=|\xi|^{-\alpha}d\xi \quad \mbox{with $\alpha \in (0,d)$ and $d\geq 1$}.
\end{equation}
Let $u^{\alpha}$ be a continuous modification of the Skorohod solution of equation \eqref{PAM} with noise $W$ replaced by $W^{\alpha}$. Assume that the initial condition $u_0$ is a signed measure which satisfies \eqref{eqn1-20230522-11:45am}. For any $\alpha \in \big(\max(d-2, 0),d \big)$ and for any $t_0 \in (0,T)$, if $\alpha_n \to \alpha^* \in (\max(d-2,0),d)$, then
\[
u^{\alpha_n} \stackrel{d}{\longrightarrow} u^{\alpha^*} \quad \
\mbox{in $C([t_0,T] \times \bR^d)$},
\]
where $C([t_0,T] \times \bR^d)$ is equipped with the topology of uniform convergence on compact sets.
\end{theorem}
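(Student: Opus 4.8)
The plan is to follow the standard two-step route to weak convergence in the Polish space $C([t_0,T]\times\bR^d)$ (topology of uniform convergence on compacts), adapting to the rough initial condition the argument used for constant $u_0$ in \cite{BL2023}. Since that topology is metrised by uniform convergence on the sets $[t_0,T]\times K$ with $K\subset\bR^d$ compact, it suffices to prove, for every such $K$: (a) tightness of $\{u^{\alpha_n}\}$ in $C([t_0,T]\times K)$; and (b) convergence of the finite-dimensional distributions of $u^{\alpha_n}$ to those of $u^{\alpha^*}$. The structural fact exploited throughout is that in the chaos expansion \eqref{series} the kernels $f_{t,x,n}$ do \emph{not} depend on $\alpha$; the parameter enters only through the Hilbert space $\cH=\cH_\alpha$, i.e. through $\mu(d\xi)=|\xi|^{-\alpha}d\xi$, which on a fixed window $\alpha\in[\alpha^*-\delta,\alpha^*+\delta]\subset(\max(d-2,0),d)$ is dominated by the elementary bound $|\xi|^{-\alpha}\le|\xi|^{-(\alpha^*-\delta)}+|\xi|^{-(\alpha^*+\delta)}$.

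\textbf{Step 1: uniform moment estimates and tightness.} Fix $\delta>0$ with $[\alpha^*-\delta,\alpha^*+\delta]\subset(\max(d-2,0),d)$. Using the orthogonality of the terms in \eqref{series} and the explicit Fourier transform of Lemma~\ref{lem1-20230317-4:21pm}, I would estimate $n!\,\|\widetilde f_{t,x,n}\|_{\cH_\alpha^{\otimes n}}^2$: the temporal structure through the spectral representation \eqref{Fourier-phi} (or directly when $\gamma_0$ is locally bounded), the spatial structure through the Gaussian factors $\exp\{-\tfrac12\tfrac{t_{k+1}-t_k}{t_kt_{k+1}}|\sum_{j\le k}t_j\xi_j|^2\}$ and $\int_{\bR^d}|\cF\varphi(\xi)|^2|\xi|^{-\alpha}d\xi$, and the initial measure through $|w(t_1,x_1)|\le w_{+}(t_1,x_1)$ together with the factor $G_{t}(x-x_0)\,u_0(\mathrm{d}x_0)$ appearing in Lemma~\ref{lem1-20230317-4:21pm}; since $t\ge t_0>0$, \eqref{eqn1-20230522-11:45am} and the continuity of $w_{+}$ (hence its boundedness on $[t_0,T]\times K$) keep these terms under control. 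The domination $|\xi|^{-\alpha}\le|\xi|^{-(\alpha^*-\delta)}+|\xi|^{-(\alpha^*+\delta)}$ makes the bound uniform over the window, of the form $C^{n}/\Gamma(\beta n+1)$ for suitable $\beta,C>0$, hence summable in $n$; this gives
\[
\sup_{\alpha\in[\alpha^*-\delta,\alpha^*+\delta]}\ \sup_{(t,x)\in[t_0,T]\times K}\bE\big|u^{\alpha}(t,x)\big|^2<\infty,
\]
and, running the same analysis on the kernel $f_{t,x,n}-f_{s,y,n}$ (splitting off the factors carrying the $(t,x)$ versus $(s,y)$ difference and trading regularity via $|e^{ia}-e^{ib}|\le 2|a-b|^{\gamma}$ and Hölder-continuity bounds for the Gaussian density), the increment estimate
\[
\bE\big|u^{\alpha}(t,x)-u^{\alpha}(s,y)\big|^2\le C_{K}\big(|t-s|^{\theta_1}+|x-y|^{\theta_2}\big),\qquad (t,x),(s,y)\in[t_0,T]\times K,
\]
for some $\theta_1,\theta_2>0$ and $C_K$ independent of $\alpha$ in the window. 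Hypercontractivity on a fixed Wiener chaos upgrades both estimates to arbitrary $L^p$, $p\ge 2$, and Kolmogorov--Chentsov then yields tightness of $\{u^{\alpha_n}\}$ in $C([t_0,T]\times K)$.

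\textbf{Step 2: identification of the limit.} Realise all the noises on one probability space by setting
\[
W^{\alpha}(\varphi)=\int_{\bR\times\bR^d}\cF\varphi(\tau,\xi)\,\sqrt{g_0(\tau)}\,|\xi|^{-\alpha/2}\,B(d\tau,d\xi)
\]
for a fixed $\bC$-valued Gaussian white noise $B$ (with the Hermitian symmetry needed to represent real noises); by \eqref{Fourier-phi} and \eqref{Fourier-varphi} this has covariance \eqref{cov}. For $\varphi,\psi\in C_0^{\infty}$, dominated convergence (using again the window bound and the rapid decay of $\cF\varphi,\cF\psi$) gives $W^{\alpha}(\varphi)\to W^{\alpha^*}(\varphi)$ in $L^2(\Omega)$ and $\langle\varphi,\psi\rangle_{\cH_\alpha}\to\langle\varphi,\psi\rangle_{\cH_{\alpha^*}}$ as $\alpha\to\alpha^*$; iterating and polarising, $\langle \widetilde f_{t_i,x_i,n},\widetilde f_{t_j,x_j,n}\rangle_{\cH_\alpha^{\otimes n}}\to\langle \widetilde f_{t_i,x_i,n},\widetilde f_{t_j,x_j,n}\rangle_{\cH_{\alpha^*}^{\otimes n}}$ for each fixed $n$ and all points in $[t_0,T]\times\bR^d$. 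By a routine approximation argument (covariance convergence plus the uniform bounds of Step 1) the multiple Wiener integrals of a fixed kernel converge in $L^2(\Omega)$, so for each fixed $N$ the truncations $u^{\alpha,N}(t,x):=w(t,x)+\sum_{n=1}^{N}I_n^{\alpha}(f_{t,x,n})$ satisfy $(u^{\alpha,N}(t_i,x_i))_{i\le m}\stackrel{d}{\to}(u^{\alpha^*,N}(t_i,x_i))_{i\le m}$ as $\alpha\to\alpha^*$, for any finite family of points. Combining this with the uniform tail bound $\sup_{\alpha\in[\alpha^*-\delta,\alpha^*+\delta]}\sum_{n>N}\bE|I_n^{\alpha}(f_{t_i,x_i,n})|^2\to0$ as $N\to\infty$ from Step 1 (and the analogous, hence also uniformly small, tail at $\alpha^*$), a $3\varepsilon$-argument gives convergence of the full finite-dimensional distributions. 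Together with the tightness of Step 1 this proves $u^{\alpha_n}\stackrel{d}{\to}u^{\alpha^*}$ in $C([t_0,T]\times K)$ for every compact $K$, hence in $C([t_0,T]\times\bR^d)$.

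\textbf{Main obstacle.} The heart of the matter is Step 1: producing an increment estimate for $\bE|u^{\alpha}(t,x)-u^{\alpha}(s,y)|^2$ that is simultaneously (i) summable over the chaos order $n$, (ii) uniform over $\alpha$ near $\alpha^*$, and (iii) sharp enough in $|t-s|$ and $|x-y|$ for Kolmogorov's criterion, all in the presence of the (possibly singular) temporal kernel $\gamma_0$ and of the rough initial measure $u_0$ --- which, unlike in \cite{BL2023}, contributes the extra factor $\int_{\bR^d}\exp\{-i[\sum_j(1-t_j/t)\xi_j]\cdot x_0\}\,G_t(x-x_0)\,u_0(\mathrm{d}x_0)$ in Lemma~\ref{lem1-20230317-4:21pm} and destroys uniform control in $x\in\bR^d$ (only local boundedness of $w_{+}$ survives, which is exactly why the conclusion is uniform convergence on compacts). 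Forcing the $n$-dependence of the constants to appear through a Gamma function, so that the chaos series still converges after these manipulations, is the main technical effort; the uniformity in $\alpha$ is then inexpensive, being inherited from the pointwise convergence $|\xi|^{-\alpha}\to|\xi|^{-\alpha^*}$ and its domination on the compact parameter window.
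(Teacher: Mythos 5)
Your proposal is correct and follows essentially the same route as the paper: finite-dimensional convergence via the joint spectral realization of all $W^\alpha$ on one probability space, $L^2$-convergence of each fixed chaos $I_k^{\alpha_n}(f_{t,x,k})$ by dominated convergence on a compact parameter window $[a,b]\ni\alpha^*$, a uniform-in-$\alpha$ tail bound for the chaos series, and tightness from increment moment estimates on $[t_0,T]\times K$ that are uniform over $[a,b]$ (the paper's Theorem \ref{Th-uniform-C}), combined with hypercontractivity and Kolmogorov--Centsov. Your domination $|\xi|^{-\alpha}\le|\xi|^{-(\alpha^*-\delta)}+|\xi|^{-(\alpha^*+\delta)}$ is the same device as the paper's split $|\xi|^{-b}1_{\{|\xi|\le1\}}+|\xi|^{-a}1_{\{|\xi|>1\}}$, and your treatment of the rough initial condition via local boundedness of $w_+$ on $[t_0,T]\times K$ matches the paper's.
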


\begin{theorem}[The Rough Case]
\label{main-th2}
Let $\{W^{H} \}_{H \in (0,1/2)}$ be a family of zero-mean Gaussian processes with covariance \eqref{cov} in which $\gamma_0$ is given by \eqref{temporal-cov} and the measure $\mu$ is given by:
\begin{equation}
\label{def-mu2}
\mu(d\xi)= c_H |\xi|^{1-2H}d\xi \quad \mbox{with $H \in (0,1/2)$ and $d=1$.}
\end{equation}
Fix $H_0 \in (1/2,1)$. We define $\ell= \max(3/4-H_0,0)$. Assume that the initial condition $u_0$ is a signed measure which satisfies \eqref{eqn1-20230522-11:45am}. For any $H \in (\ell,1/2)$ and for any $t_0 \in (0,T)$, let $u^{H}$ be a continuous modification of the Skorohod solution of equation \eqref{PAM} with noise $W$ replaced by $W^{H}$.
If $H_n \to H^* \in (\ell,1/2)$, then
\[
u^{H_n} \stackrel{d}{\longrightarrow} u^{H^*} \quad \
\mbox{in $C([t_0,T] \times \bR)$},
\]
where $C([t_0,T] \times \bR)$ is equipped with the topology of uniform convergence on compact sets.
\end{theorem}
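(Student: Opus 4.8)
The plan is to adapt the argument of \cite{BL2023} (which treats constant $u_0$) to the measure-valued initial condition, replacing the explicit kernels there by the kernels $f_{t,x,n}$ whose spatial Fourier transform is given in Lemma~\ref{lem1-20230317-4:21pm}, and borrowing the quantitative moment estimates from \cite{BC2018} and \cite{hu-le19}. Fix a compact $K\subset\bR$; since the topology on $C([t_0,T]\times\bR)$ is that of uniform convergence on compacts, it suffices to prove $u^{H_n}\stackrel{d}{\longrightarrow}u^{H^*}$ in $C([t_0,T]\times K)$ for every such $K$ and then exhaust $\bR$ by compacts. This in turn follows from: (a) tightness of $\{u^{H_n}\}_n$ in $C([t_0,T]\times K)$; and (b) convergence of the finite-dimensional distributions $\big(u^{H_n}(t_i,x_i)\big)_{i\le k}\stackrel{d}{\longrightarrow}\big(u^{H^*}(t_i,x_i)\big)_{i\le k}$ for all finite families $(t_i,x_i)\in[t_0,T]\times K$. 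Throughout I would fix a closed subinterval $[H_-,H_+]\subset(\ell,1/2)$ with $H^*$ in its interior and, after discarding finitely many $n$, assume $H_n\in[H_-,H_+]$; all constants below are required to be uniform over $[H_-,H_+]$.

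\textbf{Tightness.} For (a) I would use Kolmogorov's criterion: it is enough that for $p$ large and some $\beta\in(0,1)$,
$$\sup_n\sup_{(t,x)\in[t_0,T]\times K}\bE|u^{H_n}(t,x)|^p<\infty\quad\text{and}\quad \sup_n\bE\big|u^{H_n}(t,x)-u^{H_n}(t',x')\big|^p\le C\big(|t-t'|+|x-x'|\big)^{\beta p},$$
with $C$ not depending on $n$ nor on $(t,x),(t',x')$ in the box. By the chaos expansion \eqref{series} and Gaussian hypercontractivity both reduce to uniform-in-$n$ bounds $\sum_{m\ge1}m!\,\|\widetilde f_{t,x,m}\|_{\cH^{\otimes m}}^2\le\sum_m C^m/(m!)^{\delta}$ and the analogous estimate for the symmetrized difference $\widetilde f_{t,x,m}-\widetilde f_{t',x',m}$, with $C,\delta>0$ uniform. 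To obtain these I would feed the Fourier transform from Lemma~\ref{lem1-20230317-4:21pm} into the spectral formula for $\|\cdot\|_{\cH^{\otimes m}}$, which carries the temporal density $g_0(\tau)=c_{H_0}|\tau|^{1-2H_0}$ (independent of $n$) and the spatial density $c_{H_n}|\xi|^{1-2H_n}$, and bound the factor $\int_{\bR}\exp\{-i[\sum_j(1-t_j/t)\xi_j]\,x_0\}\,G_t(x-x_0)\,u_0(\mathrm dx_0)$ appearing there in modulus by $w_{+}(t,x)$, which is finite and bounded on $[t_0,T]\times K$ since $t_0>0$. The remaining integrals are exactly those controlled in \cite{BC2018} and \cite{hu-le19} for a fixed noise; the only new point is to verify that their constants stay bounded as long as $H_n$ lies in $[H_-,H_+]$ (so that $H_n+H_0$ stays above $3/4$ and $H_n$ below $1/2$). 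For the increments one differentiates the smooth factor $\int e^{-i(\cdots)\,x_0}\,G_t(x-x_0)\,u_0(\mathrm dx_0)$ in $x$ and in $t$, again using $t\ge t_0$ to stay away from the singularity at the origin.

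\textbf{Finite-dimensional distributions.} The main device for (b) is to realize all the noises on one probability space. Since $H_0$ is fixed, the temporal factor $\cH_0^{\mathrm{time}}$ of $\cH$ (the Hilbert space attached to $\gamma_0$) does not vary, while the spatial factor, after a spatial Fourier transform, is the weighted space with density $c_H|\xi|^{1-2H}$. Hence, taking an isonormal Gaussian process $\mathcal G$ on the fixed Hilbert space $\cH_0^{\mathrm{time}}\otimes L^2(\bR,\mathrm d\xi)$ and setting $W^H(\varphi):=\mathcal G(\Theta_H\varphi)$ with $\Theta_H\varphi(t,\xi):=\cF\varphi(t,\cdot)(\xi)\sqrt{c_H}\,|\xi|^{\frac{1-2H}{2}}$, one checks $\Theta_H$ is an isometry, so $W^H$ has covariance \eqref{cov} for every $H$, and $u^H(t,x)=w(t,x)+\sum_{m\ge1}I_m^{\mathcal G}\big(\Theta_H^{\otimes m}\widetilde f_{t,x,m}\big)$. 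For fixed $m$ and fixed $(t,x)$,
$$\bE\big|I_m^{\mathcal G}(\Theta_{H_n}^{\otimes m}\widetilde f_{t,x,m})-I_m^{\mathcal G}(\Theta_{H^*}^{\otimes m}\widetilde f_{t,x,m})\big|^2=m!\int_{\bR^m}\Big(\textstyle\prod_{j\le m}\sqrt{c_{H_n}}|\xi_j|^{\frac{1-2H_n}{2}}-\prod_{j\le m}\sqrt{c_{H^*}}|\xi_j|^{\frac{1-2H^*}{2}}\Big)^2 Q_m(\xi_1,\dots,\xi_m)\,\mathrm d\xi_1\cdots\mathrm d\xi_m,$$
where $Q_m\ge0$ is the $n$-independent factor produced by carrying out the temporal integrations against $\prod_j\gamma_0$. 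The integrand tends to $0$ pointwise as $H_n\to H^*$ and, for $H_n\in[H_-,H_+]$, is dominated by $C^m\prod_{j\le m}\big(1+|\xi_j|^{1-2H_-}\big)\,Q_m(\xi_1,\dots,\xi_m)$, whose integral over $\bR^m$ is finite — this is precisely the kind of fixed-level second-moment integral shown finite in \cite{BC2018}, and is where $H_->\ell\ge0$ enters — so by dominated convergence the right-hand side goes to $0$. Thus $I_m^{\mathcal G}(\Theta_{H_n}^{\otimes m}\widetilde f_{t,x,m})\to I_m^{\mathcal G}(\Theta_{H^*}^{\otimes m}\widetilde f_{t,x,m})$ in $L^2(\Omega)$, jointly over finitely many orders $m$ and points $(t_i,x_i)$. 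Combining this with the uniform chaos-tail bound $\lim_{N\to\infty}\sup_n\bE\big|\sum_{m>N}I_m^{\mathcal G}(\Theta_{H_n}^{\otimes m}\widetilde f_{t,x,m})\big|^2=0$ from the tightness step gives $u^{H_n}(t,x)\to u^{H^*}(t,x)$ in $L^2(\Omega)$ for each $(t,x)$, hence jointly in probability over finitely many points, which yields (b). Finally, (a) and (b) together give $u^{H_n}\stackrel{d}{\longrightarrow}u^{H^*}$ in $C([t_0,T]\times K)$.

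\textbf{Main difficulty and the regular case.} The substantive work lies in the tightness step: establishing the chaos bounds and, especially, the increment bounds for the kernels $f_{t,x,m}$ carrying the rough initial condition, with constants genuinely uniform in $H_n$ over $[H_-,H_+]$. Concretely, one must revisit the estimates of \cite{BC2018} and \cite{hu-le19} to pin down how their constants behave as $H+H_0\downarrow3/4$ and as $H\uparrow1/2$, and confirm that the extra $u_0$-dependent factor from Lemma~\ref{lem1-20230317-4:21pm} (and its $x$- and $t$-derivatives, controlled on $[t_0,T]\times K$ via $w_{+}$ and the smoothness of the Gaussian convolution with $u_0$) can be peeled off without spoiling the spatial spectral integrals $\int c_H|\xi|^{1-2H}\mathrm d\xi$ entering at each chaos level. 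Theorem~\ref{main-th1} is proved by the same scheme, with $c_H|\xi|^{1-2H}$ replaced by $|\xi|^{-\alpha}$, $H_n\to H^*$ by $\alpha_n\to\alpha^*$, the coupling weight $\sqrt{c_H}|\xi|^{(1-2H)/2}$ by $|\xi|^{-\alpha/2}$, and the endpoint constraints $H+H_0>3/4$, $H<1/2$ by Dalang's condition $\alpha>\max(d-2,0)$ and $\alpha<d$.
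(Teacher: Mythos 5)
Your overall architecture — realize all $W^{H}$ on one probability space via the isometry $\varphi\mapsto \cF\varphi(t,\cdot)(\xi)\sqrt{c_H}|\xi|^{(1-2H)/2}$, prove $L^2(\Omega)$-convergence chaos level by chaos level by dominated convergence, control the chaos tail uniformly in $H$ on a compact subinterval, and get tightness from Kolmogorov-type moment bounds uniform in $H$ — is exactly the paper's strategy (compare \eqref{In-H}, Lemma \ref{rough-conv-Ik}, \eqref{rough-unif-conv} and Theorem \ref{rough-unif-mom}). The bound $w_{+}(t,x)$ for the $u_0$-dependent factor and the restriction to $[t_0,T]\times K$ are also as in the paper.

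However, there is a genuine gap at the heart of the second-moment and increment estimates: you never identify the mechanism that makes these integrals finite in the rough case, and the references you lean on do not supply it. In the regular case one bounds $\|f_{t,x,n}\|_{\cH^{\otimes n}}^2\le \Gamma_{0,t}^n\int_{[0,t]^n}\psi(\pmb{t_n})\,d\pmb{t_n}$ and the time singularities $(t_{j+1}-t_j)^{-(d-\alpha)/2}$ are integrable under Dalang's condition. In the rough case the spatial density $c_H|\xi|^{1-2H}$ grows at infinity, and after the change of variables and the splitting \eqref{prod-ineq} the spatial $d\xi$-integrals produce time singularities as bad as $(t_{j+1}-t_j)^{-(3-4H)/2}$, which are \emph{not} integrable on the simplex once $H<1/4$. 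The crude bound by $\Gamma_{0,t}^n$ therefore fails, and the whole point of the hypothesis $H>\ell=\max(3/4-H_0,0)$ is that the Littlewood--Hardy inequality \eqref{LH-ineq} lets one replace the double temporal integral against $\prod_j\gamma_0(t_j-s_j)$ by the $L^{1/H_0}$-norm of the diagonal, i.e.\ by $\big(\int(\cdots)^{1/(2H_0)}\big)^{2H_0}$, turning the exponent into $-(3-4H)/(4H_0)>-1$. This step, together with Lemma \ref{lem1-20230823-11:07am} to evaluate the resulting simplex integrals with mixed $t_k^{\widetilde\alpha_k}(t_{k+1}-t_k)^{\widetilde\beta_k}$ weights uniformly in $H\in[a,b]$, is the substantive content of Lemma \ref{rough-conv-Ik}, of \eqref{rough-unif-conv}, and of Theorem \ref{rough-unif-mom}; your proposal elides it entirely. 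Relatedly, your claim that ``the remaining integrals are exactly those controlled in \cite{BC2018}'' is off target: \cite{BC2018} treats only spectral measures satisfying Dalang's condition (the regular case); the fixed-$H$ rough-case estimates are in \cite{BCY2022} (and \cite{hu-le19}, with a different initial condition), and it is their Littlewood--Hardy-based computations whose constants must be tracked in $H$. Until that device is put in place, the asserted uniform bounds $\sum_m C^m/(m!)^{\delta}$, the integrability of your dominating function in the finite-dimensional step, and the increment estimates are unsupported for $H$ near $\ell$.
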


\begin{remark}
We note that in Theorems \ref{main-th1} and \ref{main-th2}, the convergence is only on compact sets of the form $[t_0, T] \times K$ with $0 < t_0 < T$ and $K \subset \bR^d$ compact. This limitation is due to some uniform moment estimates, which are needed for the proof of tightness. 
\end{remark}


The proof of Theorems \ref{main-th1} and \ref{main-th2} follow the same strategy. We first prove the convergence of the corresponding finite-dimensional distributions and tightness. In both cases, we need to guarantee that all noise processes are defined on the same probability space, for all parameter values $\alpha$, or $H$. To construct all noise processes $(W^{\alpha})_{\alpha \in (0,d)}$, respectively $(W^{H})_{H \in (0,1/2)}$, on the same probability space, we need to assume that $\mu_0$ has density (see \eqref{cond-mu0}). We refer reader to \cite{BL2023} for this construction of noise, based on a complex-valued Gaussian white noise. To prove tightness, we apply Kolmogorov-Centsov theorem. For this, we derive some moment estimates for the increments of the solution, ensuring that all bounds remain uniformly bounded for parameter values ($\alpha$ or $H$) in a compact set.

We conclude the introduction by providing a brief overview of the article's organization and introducing the notation that will be used throughout. The proofs of Theorems \ref{main-th1} and \ref{main-th2} can be found in Section \ref{Regular-noise}, respectively Section \ref{Rough-noise}. We denote
\[
f(\pmb{t_n},\pmb{x_n})=f(t_1,x_1,\ldots,t_n,x_n),
\]
where $\pmb{t_n}=(t_1,\ldots,t_n)\in \bR_{+}^n$ and $\pmb{x_n}=(x_1,\ldots,x_n)\in (\bR^d)^n$. Occasionally, we will use $\pmb{\xi_n}=(\xi_1,\ldots,\xi_n) \in (\bR^d)^n$. We will use the convention:
\begin{equation}
\label{convention}
G_t(x)=0 \quad \mbox{for any $t<0$ and $x \in \bR^d$.}
\end{equation}
We let $T_n(t)=\{\pmb{t_n}=(t_1,\ldots,t_n); 0<t_1<\ldots<t_n<t\}$ be the $n$-dimensional simplex. For any $p\geq 1$, we denote by $\|\cdot \|_p$ the norm in $L^p(\Omega)$.
We let $\sigma$ be the surface measure on the unit sphere $S_1(0)=\{z\in \bR^d;|z|=1\}$, and $c_d$ be the area of $S_1(0)$, i.e.
\begin{equation}
\label{def-cd}
c_d=\int_{S_1(0)}\sigma(dz).
\end{equation}


\section{Regular noise}
\label{Regular-noise}

In this section, we consider equation \eqref{PAM} driven by a Gaussian noise $W$ with covariance \eqref{cov} in which $\gamma_0$ is chosen such that its corresponding measure $\mu_0$ satisfies \eqref{cond-mu0}, and the measure $\mu$ is given by \eqref{def-mu1}. The existence and uniqueness of the solution was proved in \cite{BC2018}.

To emphasize the dependence on the parameter $\alpha$, we denote the noise, the Hilbert space, and the solution, by $W^{\alpha},\cH^{\alpha},u^{\alpha}$, respectively. Denoting the multiple integral of order $n$ with respect to $W^{\alpha}$ as $I_n^{\alpha}$, the series expansion \eqref{series} takes the following form:
\[
u^{\alpha}(t,x)=1+\sum_{n\geq 1}I_{n}^{\alpha}(f_{t,x,n}).
\]
As mentioned above, we need the family $\{ W^{\alpha} \}_{\alpha \in (0,d)}$ to be defined on the same probability space. We omit the details and refer the reader to \cite{BL2023} for this constructions, which involves a $\bC$-valued Gaussian measure $\widehat{W}$, defined as follows:
\begin{equation}
\label{def-W-hat}
\widehat{W}(A)=W_1(A)+iW_2(A)
\end{equation}
where $W_1$ and $W_2$ are independent space-time Gaussian white noise processes on $\bR^{d+1}$.
We recall the following representation of the multiple integral with respect to the process $W^{\alpha}$: for any $t \mapsto \varphi(t, \cdot) \in \mathcal{S}'(\bR^d)$,
\begin{equation}
\label{In-alpha}
I_k^{\alpha}(\varphi)=\int_{(\bR \times \bR^d)^k} \cF_t[\cF_x \varphi(\pmb{t_k},\bullet)(\pmb{\xi_k})](\pmb{\tau_k})
\prod_{j=1}^{k}\sqrt{g_0(\tau_j)}|\xi_j|^{-\alpha/2}
\widehat{W}(d\tau_1,d\xi_1) \ldots \widehat{W}(d\tau_k,d\xi_k),
\end{equation}
where $\cF_x$ is the Fourier transform in the space variables and $\cF_t $ is the Fourier transform in the time variables, i.e. the Fourier transform of the function $\pmb{t_k} \mapsto \cF_x \varphi(\pmb{t_k},\bullet)(\pmb{\xi_k})$. Whenever there is no risk of confusion, we drop the lower indices $t,x$ from the Fourier transform notation.

The following three lemmas will play an important role below. The first one can be found for instance in \cite{B12-POTA} and the rest are taken from \cite{BC2018}.

\begin{lemma}
\label{G-lemma}
For any $\alpha \in \big(\max(d-2,0),d\big)$ and $t>0$,
\begin{align*}
& \int_{\bR^d}|\cF G_t(\xi)|^2 |\xi-\eta|^{-\alpha}d\xi \leq K_{d,\alpha} \,t^{-(d-\alpha)/2},
\end{align*}
where
\begin{equation}
\label{def-K}
K_{d,\alpha} := \int_{\bR^d}\frac{1}{1+|\xi|^{2}}|\xi|^{-\alpha}d\xi \leq c_{d}\left( \frac{1}{d-\alpha}+\frac{1}{2-(d-\alpha)}\right),
\end{equation}
with $c_d$ is given by \eqref{def-cd}. 
\end{lemma}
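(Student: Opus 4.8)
The statement to prove is Lemma~\ref{G-lemma}: for $\alpha \in (\max(d-2,0),d)$ and $t>0$,
\[
\int_{\bR^d} |\cF G_t(\xi)|^2 |\xi-\eta|^{-\alpha}\, d\xi \le K_{d,\alpha}\, t^{-(d-\alpha)/2},
\]
with the constant $K_{d,\alpha}$ defined and bounded as in \eqref{def-K}.

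My plan is the following. First, recall $\cF G_t(\xi) = e^{-t|\xi|^2/2}$, so $|\cF G_t(\xi)|^2 = e^{-t|\xi|^2}$, and the integral in question is $\int_{\bR^d} e^{-t|\xi|^2} |\xi-\eta|^{-\alpha}\, d\xi$. The key preliminary observation is that $e^{-t|\xi|^2} \le \frac{C}{1+t|\xi|^2}$ — more precisely, since $e^{-x} \le \frac{1}{1+x}$ for all $x \ge 0$, we get $e^{-t|\xi|^2} \le (1+t|\xi|^2)^{-1}$ directly with constant $1$. Thus it suffices to bound $\int_{\bR^d} \frac{1}{1+t|\xi|^2} |\xi-\eta|^{-\alpha}\, d\xi$.

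Second, I would perform the scaling substitution $\xi = t^{-1/2}\zeta$, which gives $d\xi = t^{-d/2}\, d\zeta$ and $1+t|\xi|^2 = 1+|\zeta|^2$, while $|\xi-\eta|^{-\alpha} = t^{\alpha/2}|\zeta - t^{1/2}\eta|^{-\alpha}$. Collecting powers of $t$ yields $t^{-d/2+\alpha/2} = t^{-(d-\alpha)/2}$ times $\int_{\bR^d} \frac{1}{1+|\zeta|^2}|\zeta-\eta'|^{-\alpha}\, d\zeta$ where $\eta' = t^{1/2}\eta$. So the whole problem reduces to showing $\sup_{\eta' \in \bR^d}\int_{\bR^d} \frac{1}{1+|\zeta|^2}|\zeta-\eta'|^{-\alpha}\, d\zeta \le K_{d,\alpha}$, and moreover that this supremum is attained at $\eta' = 0$, giving exactly the constant $K_{d,\alpha}$ of \eqref{def-K}. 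The fact that the supremum over the shift is attained at the origin is a standard rearrangement-type fact: the function $\zeta \mapsto (1+|\zeta|^2)^{-1}$ is radially symmetric and decreasing, and convolving a symmetric decreasing function with $|\cdot|^{-\alpha}$ (also symmetric decreasing) is maximized at $0$; alternatively one can cite that this is the content of the cited reference \cite{B12-POTA} and simply invoke it.

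Third, I would verify the explicit bound on $K_{d,\alpha} = \int_{\bR^d}\frac{1}{1+|\xi|^2}|\xi|^{-\alpha}\,d\xi$ stated in \eqref{def-K}. Passing to polar coordinates, $K_{d,\alpha} = c_d \int_0^\infty \frac{r^{d-1-\alpha}}{1+r^2}\, dr$. Split the integral at $r=1$: on $(0,1)$ bound $\frac{1}{1+r^2}\le 1$ to get $\int_0^1 r^{d-1-\alpha}\,dr = \frac{1}{d-\alpha}$ (finite since $\alpha<d$), and on $(1,\infty)$ bound $\frac{1}{1+r^2}\le r^{-2}$ to get $\int_1^\infty r^{d-3-\alpha}\,dr = \frac{1}{2-(d-\alpha)}$ (finite since $d-\alpha<2$, i.e. $\alpha>d-2$). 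This gives $K_{d,\alpha}\le c_d\big(\frac{1}{d-\alpha}+\frac{1}{2-(d-\alpha)}\big)$. The condition $\alpha\in(\max(d-2,0),d)$ is exactly what makes both pieces converge.

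The only genuinely delicate point is justifying that the shift by $\eta$ does not increase the integral, i.e. the reduction $\sup_{\eta'}\int \frac{|\zeta-\eta'|^{-\alpha}}{1+|\zeta|^2}\,d\zeta = \int\frac{|\zeta|^{-\alpha}}{1+|\zeta|^2}\,d\zeta$. Since this is precisely what is attributed to \cite{B12-POTA}, the cleanest route is to cite it. If a self-contained argument is wanted, it follows from the Riesz rearrangement inequality: writing the integral as $(h * k)(\eta')$ with $h(\zeta)=(1+|\zeta|^2)^{-1}$ and $k(\zeta)=|\zeta|^{-\alpha}$, both nonnegative, symmetric, and radially nonincreasing, one has $(h*k)(\eta') \le (h*k)(0)$ for all $\eta'$ — this can be seen by applying the Hardy–Littlewood inequality after noting $(h*k)(\eta') = \int h(\zeta) k(\eta' - \zeta)\,d\zeta \le \int h^*(\zeta)k^*(\zeta)\,d\zeta = (h*k)(0)$ since $h=h^*$, $k=k^*$ are their own symmetric decreasing rearrangements and $k(\eta'-\cdot)$ has the same rearrangement as $k$. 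Either way, once this reduction is in hand the rest is the routine polar-coordinate computation above, so I expect the write-up to be short.
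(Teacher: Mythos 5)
Your proof is correct. The paper itself does not prove this lemma --- it simply cites \cite{B12-POTA} --- so your write-up supplies a self-contained argument where the paper has only a reference. Each step checks out: $|\cF G_t(\xi)|^2=e^{-t|\xi|^2}\le(1+t|\xi|^2)^{-1}$ from $e^{-x}\le(1+x)^{-1}$; the scaling $\xi=t^{-1/2}\zeta$ correctly produces the factor $t^{-(d-\alpha)/2}$ and reduces matters to a shift-independent bound; the reduction $\sup_{\eta'}(h*k)(\eta')=(h*k)(0)$ for the two radially nonincreasing kernels is a legitimate application of the Hardy--Littlewood rearrangement inequality (both $h$ and $k$ vanish at infinity, so their rearrangements are well defined and equal to themselves); and the polar-coordinate split at $r=1$ gives exactly the stated bound $c_d\big(\tfrac{1}{d-\alpha}+\tfrac{1}{2-(d-\alpha)}\big)$, with the two convergence conditions $\alpha<d$ and $\alpha>d-2$ matching the hypothesis. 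One remark on economy: the rearrangement step, which you rightly identify as the only delicate point, could instead be obtained from the paper's own Lemma \ref{lem1-20230321-4:23pm} (the identity $\sup_{\eta}\int|\cF\psi(\xi+\eta)|^2\mu(d\xi)=\int|\cF\psi(\xi)|^2\mu(d\xi)$ applied with $\psi=G_t$ and $\mu(d\xi)=|\xi|^{-\alpha}d\xi$, followed by the same scaling and the bound $e^{-x}\le(1+x)^{-1}$); that route stays entirely inside the toolkit the paper already imports from \cite{BC2018}, whereas yours is independent of it. Either way the constant obtained is the one claimed.
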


\begin{lemma}[Lemma 3.4 of \cite{BC2018}]
\label{lem1-20230321-4:23pm}
Let $\mu$ be a tempered measure on $\bR^d$ such that its Fourier transform in $\mathcal{S}'_{\mathbb{C}}(\bR^d)$ is a locally integrable function $\gamma$. Assume that $\gamma$ is non-negative. Then for any $\psi \in \mathcal{S}(\bR^d)$ such that $\psi \ast \widetilde{\psi}$ is non-negative, where $\widetilde{\psi}(x) = \psi(-x)$ for all $x \in \bR^d$, we have:
\begin{align*}
\sup_{\eta \in \bR^d} \int_{\bR^d} \big| \mathcal{F} \psi(\xi + \eta) \big|^2 \mu(\mathrm{d}\xi)
= \int_{\bR^d} \big| \mathcal{F} \psi(\xi) \big|^2 \mu(\mathrm{d}\xi).
\stepcounter{equation}\tag{\theequation}\label{relation2-20230322-2:19pm} 
\end{align*}
In particular, for any $a > 0$ and $t > 0$,
\begin{align*}
\sup_{\eta \in \bR^d} \int_{\bR^d} e^{-a|t\xi + \eta|^2} \mu(\mathrm{d}\xi)
= \int_{\bR^d} e^{-a|t\xi|^2} \mu(\mathrm{d}\xi).
\stepcounter{equation}\tag{\theequation}\label{relation3-20230322-2:19pm} 
\end{align*}
\end{lemma}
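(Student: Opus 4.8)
The plan is to establish, for every fixed $\eta \in \bR^d$, the inequality
\[
\int_{\bR^d} \big|\mathcal{F}\psi(\xi+\eta)\big|^2\, \mu(\mathrm{d}\xi) \;\le\; \int_{\bR^d}\big|\mathcal{F}\psi(\xi)\big|^2\,\mu(\mathrm{d}\xi).
\]
Taking the supremum over $\eta$ gives ``$\le$'' in \eqref{relation2-20230322-2:19pm}, while the choice $\eta = 0$ gives ``$\ge$''; together these yield \eqref{relation2-20230322-2:19pm}. The guiding idea is that on the ``physical side'' a shift of $\xi$ turns into multiplication by a unimodular factor, which can be discarded once one knows that both $\gamma$ and the autocorrelation $\psi \ast \widetilde{\psi}$ are non-negative.

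Concretely, I would set $\phi := \psi \ast \widetilde{\psi} \in \mathcal{S}(\bR^d)$; since in this paper's convention $\mathcal{S}(\bR^d)$ consists of real-valued functions, $\mathcal{F}\widetilde{\psi} = \overline{\mathcal{F}\psi}$ and hence $\mathcal{F}\phi = |\mathcal{F}\psi|^2$. For $\eta \in \bR^d$ put $\phi_\eta(z) := e^{-i\eta\cdot z}\phi(z) \in \mathcal{S}(\bR^d)$; then $\mathcal{F}\phi_\eta(\xi) = \mathcal{F}\phi(\xi+\eta) = |\mathcal{F}\psi(\xi+\eta)|^2$. Using the definition of $\gamma$ as the Fourier transform of $\mu$ in $\mathcal{S}'_{\bC}(\bR^d)$ (equivalently, the polarized form of \eqref{Fourier-varphi}), the duality $\langle \mu, \mathcal{F}\phi_\eta\rangle = \langle \mathcal{F}\mu, \phi_\eta\rangle = \langle \gamma, \phi_\eta\rangle$ reads
\[
\int_{\bR^d} \big|\mathcal{F}\psi(\xi+\eta)\big|^2\,\mu(\mathrm{d}\xi) \;=\; \int_{\bR^d} e^{-i\eta\cdot z}\,(\psi\ast\widetilde{\psi})(z)\,\gamma(z)\,\mathrm{d}z .
\]
Now the left-hand side is a non-negative real number, whereas on the right $|e^{-i\eta\cdot z}| = 1$ and $(\psi\ast\widetilde{\psi})(z)\gamma(z) \ge 0$; hence the modulus of the right-hand side is at most $\int_{\bR^d}(\psi\ast\widetilde{\psi})(z)\gamma(z)\,\mathrm{d}z$, which by the same duality with $\eta = 0$ equals $\int_{\bR^d}|\mathcal{F}\psi(\xi)|^2\,\mu(\mathrm{d}\xi)$. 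This proves the displayed inequality, and therefore \eqref{relation2-20230322-2:19pm}.

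For \eqref{relation3-20230322-2:19pm} I would specialize: take $\psi = G_{at^2}$, so that $\mathcal{F}\psi(\xi) = e^{-at^2|\xi|^2/2}$, i.e. $|\mathcal{F}\psi(\xi)|^2 = e^{-at^2|\xi|^2} = e^{-a|t\xi|^2}$, and $\psi\ast\widetilde{\psi} = G_{at^2}\ast G_{at^2} = G_{2at^2} > 0$, so the hypothesis holds. Applying \eqref{relation2-20230322-2:19pm} gives $\sup_{\eta}\int_{\bR^d} e^{-a|t\xi + t\eta|^2}\mu(\mathrm{d}\xi) = \int_{\bR^d} e^{-a|t\xi|^2}\mu(\mathrm{d}\xi)$; since $t > 0$, the map $\eta \mapsto t\eta$ is a bijection of $\bR^d$, so the left-hand supremum equals $\sup_\eta \int_{\bR^d} e^{-a|t\xi+\eta|^2}\mu(\mathrm{d}\xi)$, which is exactly \eqref{relation3-20230322-2:19pm}.

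The computation is essentially forced, so there is no conceptual obstacle; the only place requiring care is the justification of the tempered-distribution steps — that $\phi_\eta \in \mathcal{S}(\bR^d)$, that $\mathcal{F}\phi = |\mathcal{F}\psi|^2$ under the convention $\mathcal{F}\psi(\xi) = \int e^{-i\xi\cdot x}\psi(x)\,\mathrm{d}x$, and above all that the distributional pairing $\langle\gamma,\phi_\eta\rangle$ can be written as an absolutely convergent integral $\int \gamma\, \phi_\eta$. The last point is precisely where the hypotheses $\gamma \ge 0$ and $\psi\ast\widetilde{\psi} \ge 0$ are used: since $\gamma$ is a locally integrable tempered distribution and $\psi\ast\widetilde{\psi}$ is a non-negative Schwartz function, $\int \gamma\,(\psi\ast\widetilde{\psi})$ is finite (no cancellation), which both makes $\int e^{-i\eta\cdot z}\gamma(z)(\psi\ast\widetilde{\psi})(z)\,\mathrm{d}z$ absolutely convergent and lets one identify it with $\langle \gamma, \phi_\eta\rangle$ by a routine truncation argument.
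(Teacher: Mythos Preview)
Your argument is correct. The paper does not supply its own proof of this lemma; it simply quotes it as Lemma~3.4 of \cite{BC2018}. What you wrote is exactly the standard proof (and the one in \cite{BC2018}): pass to the spatial side via the Parseval-type duality $\langle \mu, \mathcal{F}\phi_\eta\rangle = \langle \gamma, \phi_\eta\rangle$, observe that the shift in $\xi$ becomes the unimodular factor $e^{-i\eta\cdot z}$, and use the non-negativity of $\gamma$ and of $\psi\ast\widetilde{\psi}$ to bound by the $\eta=0$ value. Your specialization to $\psi = G_{at^2}$ for the second identity, together with the bijection $\eta\mapsto t\eta$, is also the intended reduction.
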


\begin{lemma}[Lemma 4.1 of \cite{BC2018}]
\label{lem1-20230328-3:39pm}
For any $h > -1$, we have
$$\int_{0 < t_1 < \ldots < t_n < t} \big[ t_1(t_2 - t_1) \ldots (t - t_n) \big]^h \mathrm{d}t_1 \ldots \mathrm{d}t_n = \frac{\Gamma(h+1)^{n+1}}{\Gamma\Big((n+1)(h+1) \Big)}t^{n(h+1)+h}.$$
\end{lemma}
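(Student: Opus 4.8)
The final statement to prove is Lemma~\ref{lem1-20230328-3:39pm}, the computation of the iterated integral
$$\int_{0 < t_1 < \ldots < t_n < t} \big[ t_1(t_2 - t_1) \ldots (t - t_n) \big]^h \mathrm{d}t_1 \ldots \mathrm{d}t_n = \frac{\Gamma(h+1)^{n+1}}{\Gamma\big((n+1)(h+1) \big)}\, t^{n(h+1)+h}.$$

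\textbf{Approach.} The plan is to proceed by induction on $n$, peeling off one integration variable at a time and recognizing each successive one-dimensional integral as a Beta integral. The key observation is that the integrand factors as a product of ``gap lengths'' $s_0 := t_1$, $s_k := t_{k+1}-t_k$ for $1 \le k \le n-1$, and $s_n := t - t_n$, each raised to the power $h$. A change of variables to these consecutive differences will make the structure transparent: the region $0 < t_1 < \cdots < t_n < t$ corresponds to $s_0, \ldots, s_n > 0$ with $s_0 + s_1 + \cdots + s_n = t$, i.e. the scaled simplex, and the integral becomes a Dirichlet-type integral $\int_{\Delta_t} \prod_{k=0}^n s_k^h \, \mathrm{d}s$, which has the stated closed form by the classical Dirichlet integral formula.

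\textbf{Key steps.} First I would establish the base case $n=1$: $\int_0^t t_1^h (t-t_1)^h \, \mathrm{d}t_1 = t^{2h+1} B(h+1,h+1) = t^{2h+1}\Gamma(h+1)^2/\Gamma(2h+2)$, via the substitution $t_1 = tu$ and the definition of the Beta function (this is where $h > -1$ is needed for convergence at both endpoints). For the inductive step, fix $t_1, \ldots, t_{n-1}$ and integrate out $t_n$ over $(t_{n-1}, t)$: with $t_n = t_{n-1} + (t - t_{n-1})u$ one gets $\int_{t_{n-1}}^t (t_n - t_{n-1})^h (t-t_n)^h \, \mathrm{d}t_n = (t-t_{n-1})^{2h+1} B(h+1,h+1)$. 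This reduces the $n$-fold integral to $B(h+1,h+1)$ times an $(n-1)$-fold integral of the same form but with the last gap exponent bumped from $h$ to $2h+1$; more cleanly, one should set up the induction so that after integrating variables $t_n, t_{n-1}, \ldots, t_{k+1}$ one is left with $\int_{0<t_1<\cdots<t_k<t}[t_1(t_2-t_1)\cdots(t_k - t_{k-1})]^h (t - t_k)^{(n-k)(h+1)+h-1}\,\mathrm{d}t_1\cdots\mathrm{d}t_k$ up to an explicit product of Gamma factors, and then the telescoping of $\Gamma$-quotients $\prod_{j} \frac{\Gamma(h+1)\Gamma(j(h+1)+h)}{\Gamma((j+1)(h+1)+h)}$ collapses to $\Gamma(h+1)^{n+1}/\Gamma((n+1)(h+1))$. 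Alternatively, I would just invoke the Dirichlet integral formula $\int_{\Delta_t} \prod_{k=0}^n s_k^{a_k - 1}\,\mathrm{d}s = t^{\sum a_k - 1}\prod \Gamma(a_k)/\Gamma(\sum a_k)$ directly with all $a_k = h+1$, after the change of variables to the gaps; a one-line scaling argument in $t$ recovers the power $t^{n(h+1)+h}$ since the integrand is homogeneous of degree $(n+1)h$ and the domain contributes $t^n$ from $\mathrm{d}t_1\cdots\mathrm{d}t_n$.

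\textbf{Main obstacle.} There is no real obstacle here — this is a standard identity — and the only thing requiring a little care is the bookkeeping in the inductive telescoping of the Gamma-function quotients, making sure the exponent of the ``remaining'' variable $t$ is tracked correctly as $(n-k)(h+1) + h - 1$ (not off by one) at each stage, and that the hypothesis $h > -1$ guarantees $h+1 > 0$ so that every Beta integral along the way converges. For cleanliness I would present the proof via the Dirichlet integral / simplex reformulation rather than the raw induction, as it makes the role of each exponent manifest and avoids any indexing slips, then note the induction proof as the elementary alternative.
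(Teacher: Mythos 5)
The paper does not prove this lemma; it is quoted verbatim as Lemma 4.1 of \cite{BC2018}, so there is no in-paper argument to compare against. Your proposed proof is correct and is the standard one: the change of variables to the gaps $s_0=t_1$, $s_k=t_{k+1}-t_k$, $s_n=t-t_n$ has unit Jacobian, turns the simplex $0<t_1<\cdots<t_n<t$ into $\{s_k>0,\ \sum_{k=0}^{n}s_k=t\}$, and the Dirichlet integral formula with all parameters equal to $h+1$ gives exactly $\Gamma(h+1)^{n+1}/\Gamma\big((n+1)(h+1)\big)\,t^{(n+1)(h+1)-1}$, and $(n+1)(h+1)-1=n(h+1)+h$; the hypothesis $h>-1$ is precisely what makes each Beta/Dirichlet integral converge. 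One small correction to the inductive bookkeeping you sketch as the alternative route: after integrating out $t_n,\ldots,t_{k+1}$ the surviving factor should be $(t-t_k)^{(n-k)(h+1)+h}$, not $(t-t_k)^{(n-k)(h+1)+h-1}$ --- check it at $k=n-1$, where one step gives $\int_{t_{n-1}}^{t}(t_n-t_{n-1})^h(t-t_n)^h\,\mathrm{d}t_n=B(h+1,h+1)(t-t_{n-1})^{2h+1}$ and $2h+1=(h+1)+h$, and at $k=0$, where the exponent must match the final $t^{n(h+1)+h}$. With that exponent corrected, the telescoping product $\prod_{j=1}^{n}\Gamma(h+1)\Gamma\big(j(h+1)+h+1\big)/\Gamma\big((j+1)(h+1)\big)$ collapses as you intend.
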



The following result will be used in the proof of Theorem \ref{main-th1}, in order to show the finite dimensional convergence.

\begin{lemma}
\label{Ik-conv}
If $\alpha_n \to \alpha^* \in \big(\max(d-2,0),d\big)$, then for any $t>0$, $x \in \bR^d$ and $k\geq 1$,
\[
\bE|I_k^{\alpha_n}(f_{t,x,k}) - I_k^{\alpha^*}(f_{t,x,k})|^2 \to 0, \quad \mbox{as} \quad n\to \infty.
\]
\end{lemma}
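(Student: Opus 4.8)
\noindent\emph{Proof proposal.} The plan is to exploit the representation \eqref{In-alpha}, which writes every $I_k^{\alpha}(f_{t,x,k})$ as a $k$-fold multiple integral with respect to the \emph{single} $\bC$-valued white noise $\widehat W$ of \eqref{def-W-hat}, with a kernel that depends on $\alpha$ only through the factor $\prod_{j=1}^k|\xi_j|^{-\alpha/2}$. By linearity of the multiple integral in its integrand, the difference $I_k^{\alpha_n}(f_{t,x,k})-I_k^{\alpha^*}(f_{t,x,k})$ is then the multiple $\widehat W$-integral of the kernel
\[
g_n(\pmb{\tau_k},\pmb{\xi_k}):=\cF_t\big[\cF_x f_{t,x,k}(\pmb{t_k},\bullet)(\pmb{\xi_k})\big](\pmb{\tau_k})\;\prod_{j=1}^k\sqrt{g_0(\tau_j)}\;\Big(\prod_{j=1}^k|\xi_j|^{-\alpha_n/2}-\prod_{j=1}^k|\xi_j|^{-\alpha^*/2}\Big),
\]
so that the isometry property of multiple integrals with respect to $\widehat W$ gives $\bE|I_k^{\alpha_n}(f_{t,x,k})-I_k^{\alpha^*}(f_{t,x,k})|^2\leq C_k\|g_n\|_{L^2((\bR\times\bR^d)^k)}^2$ for some constant $C_k$. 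It thus suffices to prove $\|g_n\|_{L^2}^2\to0$, which I would do by dominated convergence.

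For the pointwise limit: the factor $\cF_t[\cF_x f_{t,x,k}(\pmb{t_k},\bullet)(\pmb{\xi_k})](\pmb{\tau_k})\prod_{j=1}^k\sqrt{g_0(\tau_j)}$ does not depend on $\alpha$, and for a.e.\ $\pmb{\xi_k}$ (those with all $\xi_j\neq0$) the map $s\mapsto\prod_{j=1}^k|\xi_j|^{-s/2}$ is continuous, so $g_n\to0$ a.e. For the dominating function: since $\alpha_n\to\alpha^*\in(\max(d-2,0),d)$, I fix $\alpha_-,\alpha_+$ with $\max(d-2,0)<\alpha_-<\alpha^*<\alpha_+<d$, so that $\alpha_n\in[\alpha_-,\alpha_+]$ for all large $n$; then $|\xi|^{-\alpha_n/2}\leq\phi(\xi)$ and $|\xi|^{-\alpha^*/2}\leq\phi(\xi)$ for every $\xi\neq0$, where $\phi(\xi):=|\xi|^{-\alpha_+/2}\mathbf{1}_{\{|\xi|\leq1\}}+|\xi|^{-\alpha_-/2}\mathbf{1}_{\{|\xi|>1\}}$, whence $|g_n(\pmb{\tau_k},\pmb{\xi_k})|^2\leq4\,\big|\cF_t[\cF_x f_{t,x,k}(\pmb{t_k},\bullet)(\pmb{\xi_k})](\pmb{\tau_k})\big|^2\prod_{j=1}^k g_0(\tau_j)\prod_{j=1}^k\phi(\xi_j)^2$ for all large $n$.

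The crux — and the step I expect to be the main obstacle — is to verify that this dominating function is integrable on $(\bR\times\bR^d)^k$. Using $\phi(\xi)^2\leq|\xi|^{-\alpha_-}+|\xi|^{-\alpha_+}$ and expanding $\prod_{j=1}^k\phi(\xi_j)^2$, its integral is bounded by a sum of $2^k$ integrals of the form $\int\big|\cF_t[\cF_x f_{t,x,k}(\pmb{t_k},\bullet)(\pmb{\xi_k})](\pmb{\tau_k})\big|^2\prod_{j=1}^k g_0(\tau_j)\prod_{j=1}^k|\xi_j|^{-\beta_j}\,d\pmb{\tau_k}\,d\pmb{\xi_k}$ with each $\beta_j\in\{\alpha_-,\alpha_+\}$. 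I would show each of these is finite by reproducing the moment estimate of \cite{BC2018} for $\bE|I_k^{\alpha}(f_{t,x,k})|^2$: substitute the explicit Fourier transform of Lemma \ref{lem1-20230317-4:21pm}, integrate out the spatial frequencies level by level using Lemmas \ref{lem1-20230321-4:23pm} and \ref{G-lemma} — now carrying a possibly different exponent $\beta_j\in(\max(d-2,0),d)$ at each level rather than one fixed $\alpha$ — and control the surviving time integral via Lemma \ref{lem1-20230328-3:39pm} together with the bound $w_{+}(t,x)<\infty$ for the contribution of the initial measure $u_0$. Finiteness holds because each $\beta_j$ satisfies $d-\beta_j<2$ and $\beta_j<d$. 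Once integrability is established, dominated convergence yields $\|g_n\|_{L^2}^2\to0$, proving the lemma. This is the same scheme used in \cite{BL2023} for the constant initial condition; the only new ingredient is keeping track of the initial measure through the Fourier formula of Lemma \ref{lem1-20230317-4:21pm} and through $w_{+}$.
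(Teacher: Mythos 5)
Your proposal is correct and follows essentially the same route as the paper: the representation \eqref{In-alpha} over $\widehat W$, dominated convergence with a dominating spatial weight of the form $|\xi|^{-b}\mathbf{1}_{\{|\xi|\le1\}}+|\xi|^{-a}\mathbf{1}_{\{|\xi|>1\}}$ for $\max(d-2,0)<a<\alpha^*<b<d$, and integrability via Lemmas \ref{lem1-20230321-4:23pm}, \ref{G-lemma}, \ref{lem1-20230328-3:39pm} together with the bound $|\phi_{\pmb{\xi_k}}(\pmb{t_k})|\le w_+(t,x)\prod_k e^{-\cdots}$. The only cosmetic difference is that the paper first strips the temporal part by bounding the $\cH^{\otimes k}$-norm with $\Gamma_{0,t}^k$ and then applies dominated convergence in $(\pmb{t_k},\pmb{\xi_k})$, whereas you apply it directly in $(\pmb{\tau_k},\pmb{\xi_k})$; the integrability check is the same computation either way.
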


\begin{proof}
By \eqref{In-alpha}, we have
\begin{align*}
I_k^{\alpha_n}(f_{t,x,k}) - I_k^{\alpha^*}(f_{t,x,k})
=\int_{(\bR \times \bR^d)^n} H_k^{(n)}(\pmb{\tau_k},\pmb{\xi_k})
\widehat{W}(d\tau_1,d\xi_1) \ldots \widehat{W}(d\tau_n,d\xi_n),
\stepcounter{equation}\tag{\theequation}\label{relation3-20230728-7:46pm} 
\end{align*}
where
\[
H_k^{(n)}(\pmb{\tau_k},\pmb{\xi_k})=\cF \phi_{\pmb{\xi_k}}(\pmb{\tau_k})
\prod_{j=1}^{k}\sqrt{g_0(\tau_j)}
\left(\prod_{j=1}^k|\xi_j|^{-\alpha_n/2}-
\prod_{j=1}^k|\xi_j|^{-\alpha^*/2}\right)
\]
and
\begin{align*}
\phi_{\pmb{\xi_k}}(\pmb{t_k})&:=\cF f_{t,x,k}(\pmb{t_k},\cdot)(\pmb{\xi_k})\\
&= \prod_{j = 1}^{k} \exp \bigg\{ -\frac{1}{2} \frac{t_{j+1} - t_j}{t_j t_{j+1}} \Big| \sum_{i = 1}^{j} t_i \xi_i \Big|^2 \bigg\}
\exp \bigg\{ -\frac{i}{t} \Big( \sum_{j = 1}^{k} t_j \xi_j  \Big) \cdot x \bigg\}\\
&\qquad \quad \times \int_{\bR^d} \exp\Big\{ -i \Big[ \sum_{j = 1}^{k} \Big( 1 - \frac{t_j}{t} \Big) \xi_j \Big] \cdot x_0  \Big\} G_t(x-x_0) u_0(\mathrm{d}x_0),
\stepcounter{equation}\tag{\theequation}\label{def-phi-k} 
\end{align*}
with $t_{k+1}=t$. 
Note that \eqref{relation3-20230728-7:46pm} is the same as in the proof of Lemma 3.3 of \cite{BL2023}, but here we use a different expression for the Fourier transform $\cF f_{t,x,k}(\pmb{t_k},\cdot)(\pmb{\xi_k})$, due to the general initial condition. This form is specific to the heat equation, since it was developed using the semigroup property of the heat kernel. Proceeding as in the proof of Lemma 3.3 of \cite{BL2023}, we infer that
\begin{align*}
Q_n :=\bE|I_k^{\alpha_n}(f_{t,x,k}) -I_k^{\alpha^*}(f_{t,x,k})|^2 \le k! \Gamma_{0,t}^k\int_{T_k(t)} A_k^{(n)}(\pmb{t_k},\pmb{t_k}) d \pmb{t_k}
\end{align*}
where $T_k(t)=\{0<t_1<\ldots<t_k<t\}$, 
\begin{align*}
A_k^{(n)}(\pmb{t_k},\pmb{s_k})=\int_{(\bR^d)^k}
 \phi_{\pmb{\xi_k}}(\pmb{t_k})  \phi_{\pmb{\xi_k}}(\pmb{s_k})  \bigg|\prod_{j=1}^k|\xi_j|^{-\alpha_n/2}-
\prod_{j=1}^k|\xi_j|^{-\alpha^*/2}\bigg|^2 d\pmb{\xi_k}
\end{align*}
and $\Gamma_{0,t}=\int_{-t}^t \gamma_0(s)ds$. It suffices to show that the integral appearing in the upper bound above converges to $0$ as $n\to \infty$, i.e.
$$T_n :=
\int_{T_k(t)} \int_{(\bR^d)^k} \big| \phi_{\pmb{\xi_k}}(\pmb{t_k}) \big|^2 \left|\prod_{j=1}^k|\xi_j|^{-\alpha_n/2}-
\prod_{j=1}^k|\xi_j|^{-\alpha^*/2}\right|^2
  d\pmb{\xi_k} d \pmb{t_k} \to 0.
$$
Note that the integrand converges pointwise to $0$ on $T_k(t) \times (\bR^d)^k$, as $n \to \infty$.
To apply the Dominated Convergence Theorem, it suffices to find an integrable bound for the term depending on $\alpha_n$:
\[
g_n(\pmb{t_k},\pmb{\xi_k}):= \big| \phi_{\pmb{\xi_k}}(\pmb{t_k}) \big|^2 \prod_{j=1}^k|\xi_j|^{-\alpha_n},
\]
and we need to find a function $g$ on $T_k(t) \times (\bR^d)^k$, such that
\[
g_n(\pmb{t_k},\pmb{\xi_k}) \leq g(\pmb{t_k},\pmb{\xi_k}) \quad \mbox{for all $n$}, \quad \mbox{and} \quad \int_{T_k(t)} \int_{(\bR^d)^k}g(\pmb{t_k},\pmb{\xi_k}) d\pmb{\xi_k} d\pmb{t_k}<\infty.
\]

Recall that $\max\{ d-2, 0 \} < \alpha^{\ast} < d$. Fix numbers $a$ and $b$ such that $\max\{ d-2, 0 \} < a < \alpha^{\ast} < b < d$. Since $\alpha_n \to \alpha^{\ast}$, there exists $N \in \mathbb{N}$ such that
$a \le \alpha_n \le b$ for all $n \ge N$. A natural candidate for $g$ is
\begin{align*}
g(\pmb{t_k},\pmb{\xi_k}) &:= \big| \phi_{\pmb{\xi_k}}(\pmb{t_k}) \big|^2
\prod_{j=1}^k  \big(|\xi_j|^{-b}1_{\{ |\xi_j| \le 1 \}}  + |\xi_j|^{-a}1_{\{|\xi_j| > 1 \}}\big)\\
&\le w_{+}^2(t,x) \prod_{j = 1}^{k} \exp \bigg\{ -\frac{t_{j+1} - t_j}{t_j t_{j +1}} \Big| \sum_{i = 1}^{j} t_i \xi_i \Big|^2 \bigg\} \Big( |\xi_j|^{-b}1_{\{ |\xi_j| \le 1 \}}  + |\xi_j|^{-a} 1_{\{ |\xi_j| > 1 \}}\Big).
\stepcounter{equation}\tag{\theequation}\label{relation4-20230728-8:01pm} 
\end{align*}
We fix $j = 1, \ldots, k$. By Lemma \ref{G-lemma}, we have
\begin{align*}
&\int_{\bR^d}  e^{-\frac{t_{j+1} - t_j}{t_j t_{j +1}} \big|\sum_{i = 1}^{j} t_i \xi_i \big|^2} \Big( |\xi_j|^{-b}1_{\{ |\xi_j| \le 1 \}}  + |\xi_j|^{-a} 1_{\{ |\xi_j| > 1 \}}\Big)  \mathrm{d}\xi_j\\
&\qquad \le \int_{|\xi_j| \le 1} |\xi_j|^{-b} \mathrm{d}\xi_j + \sup_{\eta \in \bR^d} \int_{\bR^d} e^{-\frac{t_{j+1} - t_j}{t_j t_{j +1}} \big| t_j \xi_j + \eta \big|^2} |\xi_j|^{-a}\mathrm{d}\xi_j\\
&\qquad = \frac{c_d}{d - b} + \int_{\bR^d}  e^{-\frac{t_{j+1} - t_j}{t_j t_{j +1}} | t_j \xi_j |^2} |\xi_j|^{-a}\mathrm{d}\xi_j = \frac{c_d}{d - b} + \int_{\bR^d}  e^{-\frac{(t_{j+1} - t_j)t_j }{t_{j +1}} \xi_j^2} |\xi_j|^{-a}\mathrm{d}\xi_j\\
&\qquad \le \frac{c_d}{d - b} + \bigg( \frac{(t_{j+1} - t_j)t_j }{t_{j +1}} \bigg)^{-\frac{d - a}{2}} K_{d, a} 
=: C\Big[ 1 + \bigg( \frac{(t_{j+1} - t_j)t_j }{t_{j +1}} \bigg)^{-\frac{d - a}{2}} \Big]
\stepcounter{equation}\tag{\theequation}\label{relation1-20230404-3:19pm} 
\end{align*}
where $c_d$ given in \eqref{def-cd} and $K_{d, a}$ are given in \eqref{def-K}. Observe that 
for any $0 < t_1 < \ldots < t_k < t_{k+1} = t$, we have $\frac{(t_{j+1} - t_j)t_j }{t_{j +1}} \le t_{j+1} - t_j \le t$. Therefore we have $\Big( \frac{(t_{j+1} - t_j)t_j }{t_{j +1}} \Big)^{-\frac{d - a}{2}} \ge t^{-\frac{d - a}{2}},$
and multiplying by $t^{\frac{d - a}{2}}$ on both sides, we obtain:
$1 \le t^{\frac{d - a}{2}} \Big( \frac{(t_{j+1} - t_j)t_j }{t_{j +1}} \Big)^{-\frac{d - a}{2}}  $
which implies
\begin{align*}
1 + \Big( \frac{(t_{j+1} - t_j)t_j }{t_{j +1}} \Big)^{-\frac{d - a}{2}} &\le \Big( \frac{(t_{j+1} - t_j)t_j }{t_{j +1}} \Big)^{-\frac{d - a}{2}} \Big( t^{\frac{d - a}{2}}  + 1 \Big).
\stepcounter{equation}\tag{\theequation}\label{relation1-20230728-10:01am} 
\end{align*}
Using relations \eqref{relation4-20230728-8:01pm}, \eqref{relation1-20230404-3:19pm} and \eqref{relation1-20230728-10:01am}, we have
\begin{align}
\int_{T_k(t)} \int_{(\bR^d)^k} g(\pmb{t_k},\pmb{\xi_k}) d\pmb{\xi_k} d \pmb{t_k} &\le w^2(t,x) C^k \int_{T_k(t)} \prod_{j=1}^{k} \Big( \frac{(t_{j+1} - t_j)t_j }{t_{j +1}} \Big)^{-\frac{d - a}{2}} \mathrm{d} \bold{t} \nonumber \\
&\le C \frac{\Gamma\big(\frac{2-(d-a)}{2}\big)^{n+1}}{\Gamma\Big((n+1)(\frac{2-(d-a)}{2}) \Big)}t^{n(\frac{2-(d-a)}{2})-\frac{d-a}{2}} \nonumber
\end{align}
where for the last line, we used Lemma \ref{lem1-20230328-3:39pm}. 
\end{proof}


For the proof of tightness, we need some upper bounds for the moments of the increments of $u_{\alpha}$. These bounds have been obtained in \cite{BC2018} and are given by Theorem \ref{Holder-th} below. However, for our problem, we will need the explicit form of the constant $C$ appearing in this theorem, and this will be described in Theorem \ref{Th-uniform-C}. Note that Theorem \ref{Holder-th} guarantees the existence of a H\"{o}lder continuous modification of the solution $u_{\alpha}$. We work with this modification, which we denote also by $u_{\alpha}$.  

\begin{theorem}[Theorem 1.3 of \cite{BC2018}]
\label{Holder-th}
Let $u^{\alpha}$ be the solution of equation \eqref{PAM} starting from an initial measure $u_0$ that satisfying \eqref{eqn1-20230522-11:45am}. Suppose that:
$$\int_{\bR^d} \Big( \frac{1}{1 + |\xi|^2} \Big)^{\beta} \mu(d \xi) < \infty \quad \text{ for some } \beta \in (0,1).$$
Then for any $p \ge 2$ and $a > 1$ there exists a constant $C > 0$ depending on $p, a, \lambda$ and $\beta$ such that for any $(t,x)$, $(t',x') \in K_a := [1/a, a] \times [-a, a]^d$, 
\begin{align}
\label{Holder-eq}
& \|u^{\alpha}(t,x)-u^{\alpha}(t',x')\|_p \leq C\big(|t-t'|^{\frac{1-\beta}{2}} +|x-x'|^{1-\beta}\big),
\end{align}
Consequently, for any $a > 1$, the process $\{u^{\alpha}(t,x); (t,x) \in K_a\}$ has a modification which is a.s. $\theta_1$-H\"{o}lder continuous in time and a.s. $\theta_2$-H\"{o}lder continuous in space, for any $\theta_1 \in (0, (1-\beta)/2)$ and $\theta_2 \in (0, 1-\beta)$.
\end{theorem}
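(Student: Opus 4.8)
\medskip
\noindent\emph{Proof sketch.} This is Theorem 1.3 of \cite{BC2018}; we outline the argument. The plan is to work from the chaos expansion $u^{\alpha}(t,x)=w(t,x)+\sum_{n\ge1}I^{\alpha}_{n}(f_{t,x,n})$ and estimate the increment term by term, using hypercontractivity to reduce the $L^{p}(\Omega)$-bound to a sum of $L^{2}(\Omega)$-bounds. Since an element $F$ of the $n$-th Wiener chaos satisfies $\|F\|_{p}\le(p-1)^{n/2}\|F\|_{2}$ for $p\ge2$, Minkowski's inequality gives
\begin{align*}
\|u^{\alpha}(t,x)-u^{\alpha}(t',x')\|_{p}\le|w(t,x)-w(t',x')|+\sum_{n\ge1}(p-1)^{n/2}\big\|I^{\alpha}_{n}(f_{t,x,n}-f_{t',x',n})\big\|_{2}.
\end{align*}
The deterministic term is harmless: on $K_{a}$ (hence away from $t=0$) the function $w$ is smooth in $(t,x)$, and differentiating under the integral together with $w_{+}<\infty$ (equivalent to \eqref{eqn1-20230522-11:45am}) gives $|w(t,x)-w(t',x')|\le C_{a}(|t-t'|+|x-x'|)$, which is stronger than needed. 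For the chaos terms we use $\|I^{\alpha}_{n}(g)\|_{2}^{2}=n!\,\|\widetilde g\|_{\cH^{\otimes n}}^{2}\le n!\,\|g\|_{\cH^{\otimes n}}^{2}$ together with the spectral expression for $\|\cdot\|_{\cH^{\otimes n}}^{2}$ coming from \eqref{Fourier-phi}--\eqref{Fourier-varphi} and the explicit Fourier transform of $f_{t,x,n}$ in Lemma \ref{lem1-20230317-4:21pm}; and we split $u^{\alpha}(t,x)-u^{\alpha}(t',x')=[u^{\alpha}(t,x)-u^{\alpha}(t,x')]+[u^{\alpha}(t,x')-u^{\alpha}(t',x')]$ to handle the spatial and temporal increments separately.

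For the spatial increment ($t=t'$), only the phase $\exp\{-\tfrac{i}{t}(\sum_{j}t_{j}\xi_{j})\cdot x\}$ in Lemma \ref{lem1-20230317-4:21pm} depends on $x$. Using $|e^{i\theta}-1|\le 2^{\beta}|\theta|^{1-\beta}$, the squared modulus of $\cF f_{t,x,n}-\cF f_{t,x',n}$ is at most $C\,t^{-2(1-\beta)}\big|\sum_{j}t_{j}\xi_{j}\big|^{2(1-\beta)}|x-x'|^{2(1-\beta)}$ times $|\cF f_{t,x,n}|^{2}$ with the phase removed. We would then absorb the factor $\big|\sum_{j}t_{j}\xi_{j}\big|^{2(1-\beta)}$ into the Gaussian factor $\exp\{-\tfrac{t-t_{n}}{t_{n}t}|\sum_{j}t_{j}\xi_{j}|^{2}\}$ already present in $|\cF f_{t,x,n}|^{2}$, at the cost of a bounded multiple of $(t-t_{n})^{-(1-\beta)}$ and of losing half of that Gaussian factor, and evaluate the spectral integrals in $\xi_{1},\dots,\xi_{n}$ one variable at a time. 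Exactly as in the computation in the proof of Lemma \ref{Ik-conv}, the shift-invariance of Lemma \ref{lem1-20230321-4:23pm} and Lemma \ref{G-lemma} (more generally, the bound $\int_{\bR^{d}}e^{-s|\xi|^{2}}\mu(d\xi)\le C\max(1,s^{-\beta})$, which follows from the hypothesis) turn these into a product of powers of the successive time-gaps; combined with the extra $(t-t_{n})^{-(1-\beta)}$, the integrand over $T_{n}(t)$ becomes $[t_{1}(t_{2}-t_{1})\cdots(t-t_{n})]^{h}$ with $h>-1$ and a further $(t-t_{n})^{-(1-\beta)}$, whose contribution keeps the exponent of the last gap $>-1$ precisely because the hypothesis forces $\beta$ large enough. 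One integral in the last time variable followed by Lemma \ref{lem1-20230328-3:39pm} then bounds the simplex integral by $C^{n}\,\Gamma(h+1)^{n+1}/\Gamma\big((n+1)(h+1)\big)$ times a power of $t$, uniformly on $K_{a}$, giving $\|I^{\alpha}_{n}(f_{t,x,n}-f_{t,x',n})\|_{2}^{2}\le C^{n}\frac{\Gamma(h+1)^{n+1}}{\Gamma((n+1)(h+1))}|x-x'|^{2(1-\beta)}$.

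The temporal increment ($x=x'$, say $t<t'$) is where the real work lies, because $f_{t,x,n}$ and $f_{t',x,n}$ are supported on the distinct simplices $T_{n}(t)\subsetneq T_{n}(t')$. We would split $I^{\alpha}_{n}(f_{t',x,n})=I^{\alpha}_{n}\big(f_{t',x,n}\mathbf{1}_{T_{n}(t)}\big)+I^{\alpha}_{n}\big(f_{t',x,n}\mathbf{1}_{T_{n}(t')\setminus T_{n}(t)}\big)$ and compare the first piece with $I^{\alpha}_{n}(f_{t,x,n})$ on $T_{n}(t)$, where $\cF f_{t',x,n}-\cF f_{t,x,n}$ is controlled through the time-regularity of the heat semigroup, e.g.\ via $|e^{-s|\xi|^{2}}-e^{-s'|\xi|^{2}}|\le|s-s'|^{\gamma}|\xi|^{2\gamma}$ applied to the last kernel $G_{t'-t_{n}}$ versus $G_{t-t_{n}}$ and, inside the $u_{0}$-integral, to $G_{t'}(x-x_{0})$ versus $G_{t}(x-x_{0})$; choosing $\gamma$ so as to produce the exponent $|t-t'|^{1-\beta}$ and absorbing the spectral powers into the available Gaussian factors leads to the bound $C^{n}\frac{\Gamma(h+1)^{n+1}}{\Gamma((n+1)(h+1))}|t-t'|^{1-\beta}$ for this piece. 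For the piece on $T_{n}(t')\setminus T_{n}(t)$, the largest time variable lies in the slab $(t,t')$ of width $|t-t'|$, so after integrating the time variables there (the last gap being $\le|t-t'|$, with the temporal covariance absorbed via its $L^{1}$-mass $\Gamma_{0,t'}=\int_{-t'}^{t'}\gamma_{0}$) together with the spatial bounds (Lemmas \ref{G-lemma} and \ref{lem1-20230321-4:23pm}) and the simplex bound (Lemma \ref{lem1-20230328-3:39pm}), one again obtains a factor $|t-t'|^{1-\beta}$.

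It remains to sum over $n$. For fixed $h>-1$ the ratio $\Gamma(h+1)^{n+1}/\Gamma\big((n+1)(h+1)\big)$ decays faster than any geometric rate, so $\sum_{n\ge1}(p-1)^{n/2}\big(C^{n}\Gamma(h+1)^{n+1}/\Gamma((n+1)(h+1))\big)^{1/2}<\infty$; combining with the two estimates above gives \eqref{Holder-eq} with a constant $C=C(p,a,\beta)$. The H\"older modification then follows from the Kolmogorov--Centsov theorem on the bounded set $K_{a}\subset\bR^{1+d}$: since \eqref{Holder-eq} holds for all $p\ge2$, letting $p\to\infty$ in the Kolmogorov exponent yields a modification that is a.s.\ $\theta_{1}$-H\"older in time for every $\theta_{1}<(1-\beta)/2$ and a.s.\ $\theta_{2}$-H\"older in space for every $\theta_{2}<1-\beta$. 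The main obstacle is the temporal increment: the mismatch between $T_{n}(t)$ and $T_{n}(t')$ and the careful bookkeeping of the heat-kernel time-regularity must be organized so that the $n$-dependence remains summable with a constant independent of the point in $K_{a}$.
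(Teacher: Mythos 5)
Your overall strategy (chaos expansion, hypercontractivity, splitting the temporal increment into the piece on the common simplex $T_n(t)$ and the piece on $T_n(t')\setminus T_n(t)$, absorbing spectral powers into the Gaussian factors at the cost of negative powers of the last time gap, and summing over $n$ via the Gamma-ratio from Lemma \ref{lem1-20230328-3:39pm}) is exactly the route taken in the paper's proof of the quantitative version, Theorem \ref{Th-uniform-C}, and in \cite{BC2018}. The temporal part of your sketch, including the treatment of $G_{t'}(x-x_0)$ versus $G_t(x-x_0)$ inside the $u_0$-integral, matches the paper's decomposition into the terms $K_1,K_2$ and $T_1,T_2$.

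However, there is a genuine gap in your spatial-increment step. You assert that ``only the phase $\exp\{-\tfrac{i}{t}(\sum_j t_j\xi_j)\cdot x\}$ in Lemma \ref{lem1-20230317-4:21pm} depends on $x$.'' This is false for a general initial measure $u_0$: the factor
\[
\int_{\bR^d}\exp\Big\{-i\Big[\sum_{j=1}^n\Big(1-\tfrac{t_j}{t}\Big)\xi_j\Big]\cdot x_0\Big\}\,G_t(x-x_0)\,u_0(\mathrm{d}x_0)
\]
also depends on $x$ through the heat kernel $G_t(x-x_0)$, and this dependence does not reduce to a phase except when $u_0$ is Lebesgue measure. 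Handling this term is precisely the new difficulty created by the rough initial condition, and it is why the paper's Step 2 splits the increment into $J_1$ (the phase difference, which your argument covers) and $J_2$ (the difference $G_t(x+z-x_0)-G_t(x-x_0)$ integrated against $u_0$), controlling $J_2$ via the pointwise H\"older bound $|G_t(x)-G_t(y)|\le C_\alpha t^{-\alpha/2}[G_{2t}(x)+G_{2t}(y)]\,|x-y|^\alpha$ (Lemma 4.1 of \cite{CH2019}) together with the local boundedness of $w_+(2t,\cdot)$. Without the $J_2$ term your spatial estimate is incomplete; once you add it, the rest of your computation (the $\xi$-integrals, the simplex integral, and the summation over $n$) goes through as you describe.
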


We will use the following constant:
\begin{equation}
\label{k_alpha_constant}
k_{\alpha}(t) :=  \int_{\bR^d} e^{-\frac{t|\xi|^2}{2}} |\xi|^{-\alpha} d \xi = C^{(1)}_{d, \alpha} \, t^{-(d - \alpha)/2},
\end{equation}
where the last equation is obtained using the change of variables $\xi' = \sqrt{t}\xi$. The constant $C^{(1)}_{d, \alpha}$ is defined by:
$$C^{(1)}_{d, \alpha} := \int_{\bR^d} e^{-|\xi|^2/2}|\xi|^{-\alpha} d \xi.$$
Note that $C^{(1)}_{d, \alpha}$ can be uniformly bounded, for all $\alpha \in [a,b]$ such that $\max\{d-2, 0 \} < a < b< d$. 


The following result will be used several times in the proof of Theorem \ref{Th-uniform-C} below.

\begin{lemma}[Lemma 3.5 of \cite{BC2018}]
\label{lem1-20230324-4:38pm}
For any $0 < t_1 < \ldots < t_n < t := t_{n+1}$, we have that
$$\int_{\bR^{nd}} \exp\bigg\{ -\sum_{k = 1}^{n} \bigg( \frac{t_{k+1} - t_{k}}{t_{k+1}t_{k}} \bigg| \sum_{j=1}^{k} t_j\xi_j \bigg|^2 \bigg) \bigg\} \prod_{k=1}^{n}|\xi_k|^{-\alpha} d \xi_k \le \prod_{j = 1}^{n} k_{\alpha}\bigg( \frac{2(t_{j+1} - t_j)t_j}{t_{j+1}} \bigg).$$
\end{lemma}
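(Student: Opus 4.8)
The plan is to prove the estimate by induction on $n$, integrating out the frequency variables one at a time starting from $\xi_n$, and invoking the shift-invariance identity \eqref{relation3-20230322-2:19pm} of Lemma \ref{lem1-20230321-4:23pm} at each step to decouple the variable being removed from the ones that remain.

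For the base case $n = 1$, the left-hand side simplifies (after rewriting the coefficient $\tfrac{t_2-t_1}{t_1t_2}|t_1\xi_1|^2$ as $\tfrac{(t_2-t_1)t_1}{t_2}|\xi_1|^2$) to $\int_{\bR^d}\exp\{-\frac{(t_2-t_1)t_1}{t_2}|\xi_1|^2\}|\xi_1|^{-\alpha}\,d\xi_1$, which by the definition \eqref{k_alpha_constant} of $k_\alpha$ equals $k_\alpha\bigl(\frac{2(t_2-t_1)t_1}{t_2}\bigr)$, in fact with equality. For the inductive step, I would use that $\xi_n$ enters the exponent only through the $k=n$ term, which reads $\frac{t_{n+1}-t_n}{t_n t_{n+1}}|t_n\xi_n + \eta|^2$ with $\eta := \sum_{j=1}^{n-1} t_j\xi_j$. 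Since the integrand is non-negative, Tonelli's theorem lets me integrate in $\xi_n$ first with $\eta$ held fixed; by \eqref{relation3-20230322-2:19pm}, applied with $a = \frac{t_{n+1}-t_n}{t_n t_{n+1}}$, $t = t_n$ and the measure $\mu(d\xi) = |\xi|^{-\alpha}\,d\xi$, this inner integral is bounded, uniformly over $\eta$, by $\int_{\bR^d}\exp\{-\frac{(t_{n+1}-t_n)t_n}{t_{n+1}}|\xi_n|^2\}|\xi_n|^{-\alpha}\,d\xi_n = k_\alpha\bigl(\frac{2(t_{n+1}-t_n)t_n}{t_{n+1}}\bigr)$. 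Factoring this constant out, the surviving integral is precisely the $(n-1)$-dimensional integral of the same shape for $0 < t_1 < \cdots < t_{n-1} < t_n$, now with $t_n$ playing the role of the terminal time; the inductive hypothesis bounds it by $\prod_{j=1}^{n-1} k_\alpha\bigl(\frac{2(t_{j+1}-t_j)t_j}{t_{j+1}}\bigr)$, and multiplying by the extracted factor gives $\prod_{j=1}^{n} k_\alpha\bigl(\frac{2(t_{j+1}-t_j)t_j}{t_{j+1}}\bigr)$, as claimed.

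I do not expect a genuine obstacle; the two points that call for a little care are: first, checking that Lemma \ref{lem1-20230321-4:23pm} applies to $\mu(d\xi) = |\xi|^{-\alpha}\,d\xi$ with $\alpha \in (0,d)$ — its distributional Fourier transform is the non-negative, locally integrable Riesz kernel, and correspondingly $e^{-c|\xi|^2} = |\cF\psi(\xi)|^2$ for $\psi$ a suitably rescaled heat kernel, so that $\psi \ast \widetilde{\psi}$ is again a heat kernel, hence non-negative; second, keeping the index book-keeping straight so that after $\xi_n$ is removed the leftover integral really is the $n-1$ case with terminal time $t_n$ (and the extracted factor really is the missing $j=n$ term). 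For $\alpha \in (0,d)$ every factor $k_\alpha(\cdot)$ is finite, so the estimate is meaningful; for $\alpha \ge d$ both sides are $+\infty$ and there is nothing to prove.
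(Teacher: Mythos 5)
Your proof is correct, and it is essentially the standard argument for this estimate: the paper itself imports the lemma from \cite{BC2018} without reproving it, and the proof there proceeds exactly as you do, integrating out $\xi_n, \xi_{n-1}, \ldots, \xi_1$ in turn and invoking the shift-invariance bound \eqref{relation3-20230322-2:19pm} of Lemma \ref{lem1-20230321-4:23pm} at each stage (your two points of care --- that $|\xi|^{-\alpha}d\xi$ with $\alpha\in(0,d)$ has the non-negative locally integrable Riesz kernel as Fourier transform, and that the leftover integral is the $(n-1)$-case with terminal time $t_n$ --- are indeed the only things to check). Nothing is missing.
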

\noindent By \eqref{k_alpha_constant}, we have:
\begin{align*}
\prod_{j = 1}^{n} k_{\alpha}\bigg( \frac{2(t_{j+1} - t_j)t_j}{t_{j+1}} \bigg) &= C^{(1)}_{d, \alpha} \Big( \frac{2(t_2 - t_1)t_1}{t_2}\Big)^{-(d-\alpha)/2}  \ldots C^{(1)}_{d, \alpha} \Big(  \frac{2(t - t_n)t_n}{t} \Big)^{-(d-\alpha)/2} \\
&= \big( C^{(1)}_{d, \alpha} \big)^n 2^{-(d-\alpha)n/2} t^{(d - \alpha)/2} \big[  t_1(t_2 - t_1) \ldots (t - t_n)  \big]^{-(d-\alpha)/2}.
\stepcounter{equation}\tag{\theequation}\label{relation1-20230328-3:19pm}
\end{align*}

The next result gives the explicit form of the constant $C$ as a function of $\alpha$, which allow us to bound it uniformly for all $\alpha$ in a compact interval $[a,b]$. Then we can use it to demonstrate the tightness of $(u^{\alpha})_{\alpha \in [a,b]}$.

\begin{theorem}
\label{Th-uniform-C}
Assume that $\max(d-2,0)<a<b<d$. We fix $T > t_0 > 0$. Let $K \subset \bR^d$ be a compact set. For any $p\geq 2$, $\beta \in \big(\frac{d-a}{2},1 \big)$, $t,t' \in [t_0,T]$ and $x,x' \in K$, we have:
\[
\sup_{\alpha \in [a,b]} \|u^{\alpha}(t,x)-u^{\alpha}(t',x')\|_p \leq C \Big(|t-t'|^{\frac{1-\beta}{2}}+|x-x'|^{1-\beta}\Big).
\]
Here $C$ is a constant that depends on $(d,p,T,t_0,\beta, a,b)$.
\end{theorem}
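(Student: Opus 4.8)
\noindent
\emph{Strategy.} I would re-run the proof of the increment bound \eqref{Holder-eq} of Theorem \ref{Holder-th} (Theorem~1.3 of \cite{BC2018}) while keeping the constant in a form in which its dependence on $\alpha$ is explicit, and then verify that each $\alpha$-dependent factor stays bounded when $\alpha$ ranges over the compact interval $[a,b]$. Fix $t,t'\in[t_0,T]$ with $t'\le t$ and $x,x'\in K$. Starting from the chaos expansion \eqref{series},
\[
u^{\alpha}(t,x)-u^{\alpha}(t',x')=\bigl(w(t,x)-w(t',x')\bigr)+\sum_{n\ge1}I_n^{\alpha}\bigl(f_{t,x,n}-f_{t',x',n}\bigr),
\]
split $f_{t,x,n}-f_{t',x',n}=(f_{t,x,n}-f_{t',x,n})+(f_{t',x,n}-f_{t',x',n})$; by the triangle inequality in $L^p(\Omega)$ it then suffices to control $|w(t,x)-w(t',x')|$ and the two series $\sum_{n\ge1}\|I_n^{\alpha}(f_{t,x,n}-f_{t',x,n})\|_p$ and $\sum_{n\ge1}\|I_n^{\alpha}(f_{t',x,n}-f_{t',x',n})\|_p$. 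The deterministic term involves no $\alpha$: since $t_0>0$, the heat semigroup smooths $u_0$, so by the argument behind Lemma~B.2 of \cite{BC2018} and \eqref{eqn1-20230522-11:45am} both $w$ and $w_{+}$ are $C^{\infty}$ on $(0,\infty)\times\bR^d$; hence $w$ is Lipschitz on $[t_0,T]\times K$, which gives $|w(t,x)-w(t',x')|\le C\bigl(|t-t'|^{(1-\beta)/2}+|x-x'|^{1-\beta}\bigr)$ since $t,t'$ and $x,x'$ vary in bounded sets, and $M:=\sup_{[t_0,T]\times K}w_{+}<\infty$.

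For the chaotic part, first apply hypercontractivity on the $n$-th Wiener chaos, $\|I_n^{\alpha}(h)\|_p\le(p-1)^{n/2}\|I_n^{\alpha}(h)\|_2$ for $p\ge2$, so that only $L^2$-estimates are needed. Because each $f_{s,y,n}$ is supported on the ``increasing-time'' set $\{0<t_1<\dots<t_n<s\}$, the factor $n!$ in $\bE|I_n^{\alpha}(h)|^2=n!\,\|\widetilde h\|_{\cH^{\otimes n}}^2$ is compensated by the geometry of the symmetrization, and using $\gamma_0\ge0$ (Cauchy--Schwarz together with an iterated Young-type inequality in the time variables) one gets, for $h$ equal to any of the above differences,
\[
\bE|I_n^{\alpha}(h)|^2\le \Gamma_{0,T}^{\,n}\int_{T_n(t)}\int_{(\bR^d)^n}\bigl|\cF h(\pmb{t_n},\cdot)(\pmb{\xi_n})\bigr|^2\prod_{k=1}^n|\xi_k|^{-\alpha}\,d\pmb{\xi_n}\,d\pmb{t_n},
\]
with $\Gamma_{0,T}=\int_{-T}^{T}\gamma_0(s)\,ds<\infty$ independent of $\alpha$. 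Into this one inserts the explicit Fourier transforms of Lemma \ref{lem1-20230317-4:21pm}. For the spatial increment one extracts $|x-x'|^{1-\beta}$ out of $\bigl|\exp\{-\tfrac{i}{t'}(\sum t_j\xi_j)\cdot x\}-\exp\{-\tfrac{i}{t'}(\sum t_j\xi_j)\cdot x'\}\bigr|$ and out of the difference of the factors $G_{t'}(x-x_0)$ (the latter being $\le C|x-x'|$ uniformly on $[t_0,T]\times K$), via $|e^{i\theta}-1|\le2^{\beta}|\theta|^{1-\beta}$ and $\bigl(\sum_j|\xi_j|\bigr)^{1-\beta}\le\sum_j|\xi_j|^{1-\beta}$; for the temporal increment one extracts $|t-t'|^{(1-\beta)/2}$ as in \cite{BC2018}, where the ``sliver'' $\{t_n\in(t',t)\}$ must also be estimated. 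In every case the $u_0$-integral is bounded by $M$, the surplus spectral powers $|\xi_j|^{2(1-\beta)}$ are absorbed either into the Gaussian kernels or into one factor $|\xi_j|^{-\alpha}$ (lowering its exponent to $-\alpha+2(1-\beta)$, still integrable since $\alpha<d$), the remaining $\pmb{\xi_n}$-integral is handled by Lemma \ref{lem1-20230321-4:23pm}, Lemma \ref{lem1-20230324-4:38pm} and \eqref{relation1-20230328-3:19pm}, and the resulting time-simplex integral by the Dirichlet formula (Lemma \ref{lem1-20230328-3:39pm}, with one gap exponent lowered by $1-\beta$).

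Collecting the estimates, the $n$-th summand of each series is bounded, uniformly in $t,t'\in[t_0,T]$ and $x,x'\in K$, by $|t-t'|^{(1-\beta)/2}$ (resp.\ $|x-x'|^{1-\beta}$) times an expression of the form
\[
P(n)\,(p-1)^{n/2}\,M\,\bigl(C\,\Gamma_{0,T}\,C^{(1)}_{d,\alpha}\,C^{(1)}_{d,\alpha-2(1-\beta)}\,K_{d,a}\,T^{\kappa}\bigr)^{n/2}\biggl(\frac{\Gamma(\delta_{\alpha})^{\,n+1}}{\Gamma\bigl((n+1)\delta_{\alpha}-(1-\beta)\bigr)}\biggr)^{1/2},
\]
where $\delta_{\alpha}=\tfrac{2-(d-\alpha)}{2}$, $P$ is a polynomial, $C$ depends only on $d$, and $\kappa\ge0$. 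Here the uniformity is transparent: for $\alpha\in[a,b]$ one has $\delta_{\alpha}\ge\delta_{a}:=\tfrac{2-(d-a)}{2}>0$ and $\delta_{\alpha}-(1-\beta)=\beta-\tfrac{d-\alpha}{2}\ge\beta-\tfrac{d-a}{2}>0$, the last inequality being exactly the hypothesis $\beta\in(\tfrac{d-a}{2},1)$; hence $\Gamma(\delta_{\alpha})$ stays in a compact subinterval of $(0,\infty)$ while $\Gamma\bigl((n+1)\delta_{\alpha}-(1-\beta)\bigr)\ge\Gamma\bigl((n+1)\delta_{a}-(1-\beta)\bigr)$ for $n$ large, which by Stirling grows super-exponentially in $n$. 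Since moreover $C^{(1)}_{d,\alpha}$ and $C^{(1)}_{d,\alpha-2(1-\beta)}$ are bounded for $\alpha\in[a,b]$ (remark after \eqref{k_alpha_constant}), $K_{d,a}<\infty$ (Lemma \ref{G-lemma}), and $\Gamma_{0,T},M,T$ are $\alpha$-free, both series converge and their sums are bounded by a single constant $C=C(d,p,T,t_0,\beta,a,b)$; adding the deterministic term gives the claim.

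I expect the main obstacle to be the careful bookkeeping of the constant through the temporal increment estimate of \cite{BC2018} (including the ``sliver'' term), and specifically the verification that after extracting the Hölder factors the lowered time-simplex exponent $\delta_{\alpha}-(1-\beta)$ stays strictly positive \emph{uniformly} in $\alpha\in[a,b]$ --- this is exactly where the hypothesis $\beta>\tfrac{d-a}{2}$ is used, in its uniform form rather than the pointwise $\beta>\tfrac{d-\alpha}{2}$ that would suffice for each fixed $\alpha$ in Theorem \ref{Holder-th} --- together with the finiteness of all spectral integrals (guaranteed by $b<d$). The general (rough) initial condition enters only through the finite quantity $M=\sup_{[t_0,T]\times K}w_{+}$ and through the benign difference $G_{t'}(x-x_0)-G_{t'}(x'-x_0)$ in the spatial increment, so it poses no additional obstacle to the uniformity.
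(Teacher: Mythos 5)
Your overall strategy is the paper's own: re-run the proof of Theorem 1.3 of \cite{BC2018} with the constant's $\alpha$-dependence made explicit, split into time and space increments (including the ``sliver'' term), and obtain uniformity over $[a,b]$ from $\beta>\frac{d-a}{2}$ together with monotonicity of $\Gamma$ and boundedness of $C^{(1)}_{d,\alpha}$, $K_{d,\alpha}$ on $[a,b]$. That skeleton is correct, and your identification of where the hypothesis $\beta>\frac{d-a}{2}$ enters (keeping the lowered simplex exponent uniformly Dirichlet-integrable) is exactly right. However, two steps as written have genuine gaps.

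First, the treatment of the heat-kernel differences against the measure $u_0$. You bound $|G_{t'}(x-x_0)-G_{t'}(x'-x_0)|$ by $C|x-x'|$ ``uniformly on $[t_0,T]\times K$'' and then assert that ``in every case the $u_0$-integral is bounded by $M$''. A bound that is merely uniform in $x_0$ cannot be integrated against $u_0$: condition \eqref{eqn1-20230522-11:45am} allows $|u_0|(\bR^d)=\infty$ (e.g.\ Lebesgue measure), so $\int C|x-x'|\,|u_0|(\mathrm{d}x_0)=\infty$. One needs the Gaussian-weighted H\"older estimate $|G_t(x)-G_t(y)|\le \frac{C_{\theta}}{t^{\theta/2}}\big[G_{2t}(x)+G_{2t}(y)\big]|x-y|^{\theta}$ (Lemma 4.1 of \cite{CH2019}), so that the $u_0$-integral is dominated by $w_{+}(2t,x)+w_{+}(2t,x')$, which is what is actually finite. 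The same issue arises in the time increment for $|G_{t+h}(x-x_0)-G_t(x-x_0)|$, where the paper retains the factor $e^{-|x-x_0|^2/(2(t+h))}$ and absorbs the resulting polynomial $|x-x_0|^{2\theta}$ before integrating $u_0(\mathrm{d}x_0)$. This is precisely the point where the rough initial condition is \emph{not} ``benign'', and it is the reason the paper quotes \cite{CH2019}.

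Second, the absorption of the oscillatory surplus $\big|\sum_j t_j\xi_j\big|^{2\theta}$. You propose $(\sum_j|\xi_j|)^{1-\beta}\le\sum_j|\xi_j|^{1-\beta}$ and then folding $|\xi_j|^{2(1-\beta)}$ into one Riesz factor, producing $|\xi_j|^{-\alpha+2(1-\beta)}$. That exponent can be \emph{positive} (e.g.\ $d=1$, $\alpha$ near $0$, $\beta$ near $\frac{1}{2}$), in which case the measure $|\xi|^{-\alpha+2(1-\beta)}d\xi$ has no non-negative locally integrable Fourier transform, so Lemma \ref{lem1-20230321-4:23pm} does not apply to that factor and Lemma \ref{lem1-20230324-4:38pm} cannot be cited for the modified product of kernels; you would have to redo the $\eta$-change-of-variables computation from scratch (as the paper does in the rough case via \eqref{prod-ineq}). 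The paper avoids this entirely by noting that $\sum_j t_j\xi_j$ is exactly the argument of the last Gaussian and using $e^{-Ax^2}x^{2\theta}\le C_{\theta}A^{-\theta}e^{-Ax^2/2}$ with $A=\frac{t-t_n}{t_n t}$; this is why its simplex integrand carries the extra singularity $(t-t_n)^{-\theta}$ only on the last gap, and why the final bound involves the Beta function $B\big(1-\frac{d-\alpha}{2},\,1-\frac{d-\alpha}{2}-\theta\big)$ rather than your lowered exponent on an arbitrary gap. Both defects are repairable, but as stated neither step goes through.
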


\begin{proof}
We proceed with a similar argument as in the proof of Theorem 1.3 of \cite{BC2018}, with the addition of an examination of a missing term that was overlooked in that particular proof. To simplify the writing, we let $\theta = 1 - \beta$. 


{\bf Step 1 (time increments).} Let $t \in [t_0,T]$ and $h>0$ be such that $t+h \in [t_0,T]$. By hypercontractivity,
\begin{align*}
\big\|u^{\alpha}(t+h,x)-u^{\alpha}(t,x)\big\|_p & \leq \sum_{n\geq 1}(p-1)^{n/2}\big\|I_n^{\alpha}\big( f_{t+h,x,n}-f_{t,x,n}\big)\big\|_2 \\
& \leq \sum_{n\geq 1}(p-1)^{n/2}
\left(\frac{2}{n!} \Big(A_n^{\alpha}(t,h)+B_n^{\alpha}(t,h)\Big) \right)^{1/2},
\end{align*}
where
$A_n^{\alpha}(t,h)=(n!)^2\| \widetilde{f}_{t+h,x,n}1_{[0,t]^n}-\widetilde{f}_{t,x,n}
\|_{\cH_{\alpha}^{\otimes n}}^2$ and $B_n^{\alpha}(t,h)=(n!)^2\| \widetilde{f}_{t+h,x,n}1_{[0,t+h]^n -[0,t]^n}\|_{\cH_{\alpha}^{\otimes n}}^2$.

We examine $A_n^{\alpha}(t,h)$. Note that
\begin{align*}
A_n^{\alpha}(t,h) \leq \Gamma_{0,t}^n \int_{[0,t]^n} \psi_{t,h,n}^{\alpha}(\pmb{t_n})d \pmb{t_n},
\stepcounter{equation}\tag{\theequation}\label{rel3-20230908-1:57pm} 
\end{align*}
where the function $\psi_{t,h,n}^{\alpha}(\pmb{t_n})$ is defined as follows: if $\pmb{t_{n}} \in [0,t]^n$ is fixed and $\rho$ is the permutation of $1,\ldots,n$ such that $t_{\rho(1)}<\ldots<t_{\rho(n)}$ with $t_{\rho(n+1)}=t$, and
\begin{align*}
\psi_{t,h,n}^{\alpha}(\pmb{t_n}) & : =
 \int_{(\bR^d)^n} \Big|\cF \Big(f_{t+h,x,n}(t_{\rho(1)}, \cdot, \ldots, t_{\rho(n)}, \cdot) - f_{t,x,n}(t_{\rho(1)}, \cdot, \ldots, t_{\rho(n)}, \cdot)\Big)(\pmb{\xi_n}) \Big|^2 
\prod_{j=1}^{n}|\xi_j|^{-\alpha}d\pmb{\xi_n}.
\end{align*}
Assume for simplicity that $t_1 < \ldots < t_n$. By Lemma \ref{lem1-20230317-4:21pm}, 
\begin{align*}
\Big| \cF \big(f_{t+h,x,n}(\pmb{t_n}, \cdot) - f_{t,x,n}(\pmb{t_n}, \cdot)\big)(\pmb{\xi_n}) \Big| = \prod_{k = 1}^{n-1} \exp \Big( -\frac{1}{2} \frac{t_{k+1} - t_{k}}{t_{k} t_{k+1}} \Big| \sum_{j = 1}^{k} t_{j} \xi_{j} \Big|^2 \Big)
\Big|I^{(1)}_1 I^{(2)}_1- I^{(1)}_2 I^{(2)}_2 \Big|
\end{align*}
where
\begin{align*}
I^{(1)}_1 &:= \exp \Big( -\frac{1}{2} \frac{t+h - t_{n}}{t_{n}(t+h)} \Big| \sum_{j = 1}^{n} t_{j} \xi_{j} \Big|^2 \Big) \\
I^{(2)}_1 
&:= \int_{\bR^d} \exp\bigg( -i \Big( \sum_{j = 1}^{n} \xi_{j} \Big) \cdot x_0  \bigg) \exp \bigg( -\frac{i}{t+h} \Big( \sum_{j = 1}^{n} t_{j} \xi_{j}\Big) \cdot (x - x_0) \bigg) \, G_{t+h}(x-x_0) u_0(\mathrm{d}x_0)\\
I^{(1)}_2 &:= \exp \Big( -\frac{1}{2} \frac{t  - t_{n}}{t_{n} t} \Big| \sum_{j = 1}^{n} t_{j} \xi_{j} \Big|^2 \Big)\\ 
I^{(2)}_2 
&:= \int_{\bR^d} \exp\Big( -i \Big( \sum_{j = 1}^{n} \xi_{j} \Big) \cdot x_0  \Big) \exp \bigg( -\frac{i}{t} \Big( \sum_{j = 1}^{n} t_{j} \xi_{j}\Big) \cdot (x - x_0) \bigg) \, G_{t}(x-x_0) u_0(\mathrm{d}x_0).
\end{align*}
Introducing the mixed term $I^{(1)}_1 I^{(2)}_2$, we get:
\begin{align*}
\Big| \cF \big(f_{t+h,x,n}(\pmb{t_n}, \cdot) - f_{t,x,n}(\pmb{t_n}, \cdot)\big)(\pmb{\xi_n}) \Big| \le K_1 + K_2
\stepcounter{equation}\tag{\theequation}\label{rel1-20231205-11:51pm} 
\end{align*}
where
\begin{align*}
K_1 &:= \prod_{k = 1}^{n-1} \exp \Big( -\frac{1}{2} \frac{t_{k+1} - t_{k}}{t_{k} t_{k+1}} \Big| \sum_{j = 1}^{k} t_{j} \xi_{j} \Big|^2 \Big) \times \Big( I^{(1)}_1 \big| I^{(2)}_1 - I^{(2)}_2 \big| \Big),  \\
K_2 &:= \prod_{k = 1}^{n-1} \exp \Big( -\frac{1}{2} \frac{t_{k+1} - t_{k}}{t_{k} t_{k+1}} \Big| \sum_{j = 1}^{k} t_{j} \xi_{j} \Big|^2 \Big) \times \Big( \big| I^{(2)}_2 \big| \big| I^{(1)}_1 - I^{(1)}_2 \big|  \Big).
\end{align*}
We first consider $K_1$. Note that
\begin{align*}
\Big| I^{(2)}_1 - I^{(2)}_2 \Big|
&\le \int_{\bR^d} \bigg|
\exp \Big( -\frac{i}{t+h} \Big( \sum_{j = 1}^{n} t_{j} \xi_{j}\Big) \cdot (x - x_0) \Big) \, G_{t+h}(x-x_0)\\
&\qquad \qquad -
\exp \Big( -\frac{i}{t} \Big( \sum_{j = 1}^{n} t_{j} \xi_{j}\Big) \cdot (x - x_0) \Big) \, G_{t}(x-x_0) \bigg| u_0(\mathrm{d}x_0) \le T_1 + T_2
\end{align*}
where 
\begin{align*}
T_1 &:= \int_{\bR^d} \bigg|
\exp \Big( -\frac{i}{t+h} \Big( \sum_{j = 1}^{n} t_{j} \xi_{j}\Big) \cdot (x - x_0) \Big) \, G_{t+h}(x-x_0)\bigg|\\
&\qquad \qquad \qquad \qquad \qquad \qquad \qquad \bigg| G_{t+h}(x-x_0) - G_{t}(x-x_0) \bigg|u_0(\mathrm{d}x_0)\\
T_2 
&:=  \int_{\bR^d} \bigg| \exp \Big( -\frac{i}{t+h} \Big( \sum_{j = 1}^{n} t_{j} \xi_{j}\Big) \cdot (x - x_0) \Big) \\
&\qquad \qquad \qquad \qquad \qquad -  \exp \Big( -\frac{i}{t} \Big( \sum_{j = 1}^{n} t_{j} \xi_{j}\Big) \cdot (x - x_0) \Big) \bigg| G_{t}(x-x_0) u_0(\mathrm{d}x_0).
\end{align*}

We study $T_1$ first. Using the inequality $1 - e^{-x} \le x^{\theta}$ for any $x > 0$ and for any $\theta \in [0,1]$, we have:
\begin{align*}
T_1 
&\le \int_{\bR^d} \bigg| G_{t+h}(x-x_0) - G_{t}(x-x_0) \bigg|u_0(\mathrm{d}x_0) \le \frac{1}{(2 \pi t)^{d/2}}\int_{\bR^d} e^{- \frac{|x - x_0|^2}{2(t+h)}} \Big| 1 - e^{- \frac{h|x - x_0|^2}{2t(t+h)}} \Big| u_0(\mathrm{d}x_0)\\
&\le \frac{1}{(2 \pi t)^{d/2}} \int_{\bR^d} e^{- \frac{|x - x_0|^2}{2(t+h)}} \Big( \frac{h|x - x_0|^2}{2t(t+h)} \Big)^{\theta} u_0(\mathrm{d}x_0) \le h^{\theta} C_{t_0, d, \theta, T}.
\end{align*}
For $T_2$, using the fact that $\big| e^{-ia} - e^{-ib} \big| \le |e^{-ia}| |1 - e^{-i(b-a)}| \le |b-a|^{\theta}$, for any $\theta \in [0,1]$, we have:
\begin{align*}
T_2 
&\le \int_{\bR^d} \bigg| \frac{ \Big(\sum_{j = 1}^{n} t_{j}\xi_{j} \Big) \cdot (x - x_0) }{t} - \frac{ \Big( \sum_{j = 1}^{n} t_{j} \xi_{j} \Big) \cdot (x - x_0) }{t+h}\bigg|^{\theta} G_{t}(x-x_0) u_0(\mathrm{d}x_0)\\
&= \int_{\bR^d} \bigg| \frac{h \Big(\sum_{j = 1}^{n} t_{j}\xi_{j} \Big) \cdot (x - x_0)}{t(t+h)}
\bigg|^{\theta} G_{t}(x-x_0) u_0(\mathrm{d}x_0) \le h^{\theta} C_{t_0, \theta} \Big|\sum_{j = 1}^{n} t_{j}\xi_{j} \Big|^{\theta}.
\end{align*}
Therefore, there exist a constant $C^{(1)}$ depends on $t_0$, $T$, $\theta$, $d$ such that 
\begin{align*}
K_1 
&\le h^{\theta}  C^{(1)} \, \prod_{k = 1}^{n-1} \exp \Big( -\frac{1}{2} \frac{t_{k+1} - t_{k}}{t_{k} t_{k+1}} \Big| \sum_{j = 1}^{k} t_{j} \xi_{j} \Big|^2 \Big) \\
&\qquad \qquad \quad \times \exp \Big( -\frac{1}{2} \frac{t+h - t_{n}}{t_{n}(t+h)} \Big| \sum_{j = 1}^{n} t_{j} \xi_{j} \Big|^2 \Big) \times \Big( 1 + \Big|\sum_{j = 1}^{n} t_{j}\xi_{j} \Big|^{\theta} \Big)\\
&\le h^{\theta}  C^{(1)} \, \prod_{k = 1}^{n-1} \exp \Big( -\frac{1}{2} \frac{t_{k+1} - t_{k}}{t_{k} t_{k+1}} \Big| \sum_{j = 1}^{k} t_{j} \xi_{j} \Big|^2 \Big) \\
&\qquad \qquad  \quad \times \exp \Big( -\frac{1}{2} \frac{t - t_{n}}{t_{n} t} \Big| \sum_{j = 1}^{n} t_{j} \xi_{j} \Big|^2 \Big) \times \Big( 1 + \Big|\sum_{j = 1}^{n} t_{j}\xi_{j} \Big|^{\theta} \Big).
\end{align*}
We need the following inequality (see relation 5.2 in \cite{BC2018}): for any $A > 0$ and $\theta > 0$, there exists a constant $C_{\theta} > 0$ depending on $\theta$ such that 
\begin{align*}
e^{-Ax^2} x^{2\theta} \le C_{\theta}  A^{-\theta} e^{-\frac{A}{2}x^2}, \text{ for all } x \ge 0.
\end{align*}
Using above inequality with $A = \frac{t - t_n}{t_n t}$ and $x = \Big|\sum_{j = 1}^{n} t_{j} \xi_{j} \Big|$, we obtain:
\begin{align*}
\exp \Big( -\frac{t - t_{n}}{t_{n} t} \Big| \sum_{j = 1}^{n} t_{j} \xi_{j} \Big|^2 \Big) \Big|\sum_{j = 1}^{n} t_{j}\xi_{j} \Big|^{2\theta} 
\le C_{\theta} \Big( \frac{t - t_n}{t_n t} \Big)^{- \theta} \exp \Big( - \frac{1}{2} \frac{t - t_{n}}{t_{n} t} \Big| \sum_{j = 1}^{n} t_{j} \xi_{j} \Big|^2 \Big).
\stepcounter{equation}\tag{\theequation}\label{relation1-20230420-11:39pm} 
\end{align*}
Note that for any $t \in [t_0, T]$, we have
\begin{align*}
\Big( \frac{t - t_n}{t_n t } \Big)^{- \theta} \le T^{2\theta}(t - t_n)^{-\theta}.
\stepcounter{equation}\tag{\theequation}\label{relation2-20230420-11:39pm} 
\end{align*}
Hence, denoting $t_{n+1} = t$, there exist a constant $C^{(2)}$ depends on $t_0$, $T$, $\theta$, $d$ such that 
\begin{align*}
K^2_1 
&\le h^{2\theta}  \big( C^{(1)} \big)^2
\bigg\{ \prod_{k = 1}^{n} \exp \Big( - \frac{1}{2} \frac{t_{k+1} - t_{k}}{t_{k} t_{k+1}} \Big| \sum_{j = 1}^{k} t_{j} \xi_{j} \Big|^2 \Big)\\ 
&\qquad + \prod_{k = 1}^{n-1} \exp \Big( -\frac{1}{2} \frac{t_{k+1} - t_{k}}{t_{k} t_{k+1}} \Big| \sum_{j = 1}^{k} t_{j} \xi_{j} \Big|^2 \Big)
\times C_{\theta} \Big( \frac{t - t_n}{t_n t} \Big)^{- \theta} \exp \Big( - \frac{1}{2} \frac{t - t_{n}}{t_{n} t} \Big| \sum_{j = 1}^{n} t_{j} \xi_{j} \Big|^2 \Big) \bigg\}\\
&\le h^{2\theta}  C^{(2)} \prod_{k = 1}^{n} \exp \Big( - \frac{1}{2} \frac{t_{k+1} - t_{k}}{t_{k} t_{k+1}} \Big| \sum_{j = 1}^{k} t_{j} \xi_{j} \Big|^2 \Big) \big[1 + (t - t_n)^{-\theta} \big].
\stepcounter{equation}\tag{\theequation}\label{rel1-20230908} 
\end{align*}


We consider next the term $K_2$. Note that using the fact that $(1 - e^{-x})^2 \le 1 - e^{-x} \le x^{\theta}$ for all $x > 0$ and $\theta \in [0,1]$, we have:
\begin{align*}
\Big| I^{(1)}_1 - I^{(1)}_2 \Big|^2 
&= \exp \Big(  - \frac{t  - t_{n}}{t_{n} t} \Big| \sum_{j = 1}^{n} t_{j} \xi_{j} \Big|^2  \Big)  \bigg|1 - \exp \Big(  -\frac{1}{2} \frac{h}{t(t+h)} \Big| \sum_{j = 1}^{n} t_{j} \xi_{j} \Big|^2 \Big) \bigg) \bigg|^2\\
&\le \exp \Big( - \frac{t  - t_{n}}{t_{n} t} \Big| \sum_{j = 1}^{n} t_{j} \xi_{j} \Big|^2 \Big)
\Big( \frac{1}{2} \frac{h}{t(t+h)} \Big| \sum_{j = 1}^{n} t_{j} \xi_{j} \Big|^2 \Big)^{\theta} \\
&\le h^{\theta} C_{t_0, \theta, T} \Big( \frac{t - t_n}{t_n t } \Big)^{- \theta}  \exp \Big( - \frac{1}{2} \frac{t  - t_{n}}{t_{n} t} \Big| \sum_{j = 1}^{n} t_{j} \xi_{j} \Big|^2 \Big),
\end{align*}
where for the last inequality, we used the fact that $t(t+h) \ge t^2_0$, followed by relation \eqref{relation1-20230420-11:39pm}. Moreover, we see that $|I_{2}^{(2)}| \le w(t,x) \le C$, using that fact that $w$ is a continuous function on $[t_0, T] \times K$.
Therefore, we obtain:
\begin{align*}
K^2_2 
&\le h^{\theta} C^{(3)} \, (t - t_n)^{-\theta} \prod_{k = 1}^{n} \exp \Big( - \frac{1}{2} \frac{t_{k+1} - t_{k}}{t_{k} t_{k+1}} \Big| \sum_{j = 1}^{k} t_{j} \xi_{j} \Big|^2 \Big),
\stepcounter{equation}\tag{\theequation}\label{rel2-20230908} 
\end{align*}
where $C^{(3)} > 0$ is a constant depends on $t_0$, $T$, $\theta$, $d$.

We now give an upper bound for $\psi_{t,h,n}^{\alpha}(\pmb{t_n})$. By relations \eqref{rel1-20230908}, \eqref{rel2-20230908} and Lemma \ref{lem1-20230324-4:38pm}, we get:
\begin{align*}
\psi_{t,h,n}^{\alpha}(\pmb{t_n})
&\le \int_{(\bR^d)^n}  2(K^2_1 + K^2_2) \prod_{k = 1}^{n} |\xi_k|^{-\alpha} \mathrm{d}\boldsymbol{\xi} \le h^{\theta} C^{(4)} (t - t_n)^{-\theta} \prod_{j = 1}^{n} k_{\alpha}\Big( \frac{2(\frac{t_{j+1}}{2} - \frac{t_j}{2})(\frac{t_j}{2})}{\frac{t_{j+1}}{2}} \Big).
\end{align*}
To evaluate the last expression, we use \eqref{relation1-20230328-3:19pm}. We obtain that:
\begin{align*}
\psi_{t,h,n}^{\alpha}(\pmb{t_n}) \le h^{\theta} C^{(5)} \, t^{(d - \alpha)/2} \big[  t_1(t_2 - t_1) \ldots (t - t_n)  \big]^{-(d-\alpha)/2} (t - t_n)^{-\theta} ,
\end{align*}
where $C^{(5)}$ is a constant that depends on $d$, $t_0$, $T$, $a$, $b$, and $\beta$. 

Returning to \eqref{rel3-20230908-1:57pm}, we obtain:
\begin{align*}
A^{(\alpha)}_n(t,h) &\le h^{\theta} \Gamma_{0,t}^n \, n!  \, C^{(5)}  t^{(d - \alpha)/2} \\
&\qquad \quad \int_{0 < t_n < t} \int_{T_{n-1}(t_n)} \big[  t_1(t_2 - t_1) \ldots (t_n - t_{n-1})  \big]^{-\frac{d - \alpha}{2}} d \pmb{t_{n-1}}  (t-t_n)^{-\frac{d - \alpha}{2} - \theta} d t_n\\
& = h^{\theta} \Gamma_{0,t}^n \, n!  \, C^{(5)}  t^{(d - \alpha)/2} \frac{\Gamma\big( 1 - \frac{d - \alpha}{2} \big)^{n}}{\Gamma\big( n(1 - \frac{d - \alpha}{2} )\big)}
\int_{0}^{t}  t_n^{(n-1)(1 - \frac{d - \alpha}{2}) - \frac{d - \alpha}{2}}(t-t_n)^{-\frac{d - \alpha}{2} - \theta} d t_n\\
&\le h^{\theta} \Gamma_{0,t}^n \, n!  \, C^{(5)} \frac{\Gamma\big( 1 - \frac{d - \alpha}{2} \big)^{n}}{\Gamma\big( n(1 - \frac{d - \alpha}{2} )\big)} t^{\frac{d - \alpha}{2}} t^{(n-1)(1 - \frac{d - \alpha}{2} )}
\int_{0}^{t}  t_n^{- \frac{d - \alpha}{2}}(t-t_n)^{-\frac{d - \alpha}{2} - \theta} d t_n\\
&= h^{\theta} \Gamma_{0,t}^n \, n!  \, C^{(5)}
\frac{\Gamma\big( 1 - \frac{d - \alpha}{2} \big)^{n}}{\Gamma\big( n(1 - \frac{d - \alpha}{2} )\big)}  t^{n\big(1 - \frac{d - \alpha}{2} \big) - \theta} B\Big( 1 - \frac{d - \alpha}{2} , 1 -\frac{d - \alpha}{2} - \theta \Big),
\stepcounter{equation}\tag{\theequation}\label{rel4-20230909-9:13pm} 
\end{align*}
where we used Lemma \ref{lem1-20230328-3:39pm} in the first equality and $B(a,b)$ denotes the beta function. Note that $B( 1 - \frac{d - \alpha}{2}, 1 - \frac{d - \alpha}{2} - \theta )$ is finite since $1 - \frac{d - \alpha}{2} - \theta  > 0$ for any $\beta \in (\frac{d -a}{2} , 1)$ arbitrary. This expression can be bounded uniformly in $\alpha \in [a,b]$, using monotonicity properties of the $\Gamma$ function. We have:
\begin{align*}
\sup_{\alpha \in [a,b]} \sum_{n \ge 1} (p-1)^{n/2} \Big( \frac{1}{n!} A^{(\alpha)}_n(t,h) \Big)^{1/2}
\le C h^{(1-\beta)/2},
\end{align*}
where $C$ depends on ($d$, $t_0$, $T$, $p$, $a$, $b$, $\beta$). 
 

Next, we examine $B_n^{\alpha}(t,h)$. Let $D_{t,h}=[0,t+h]^n \verb2\2 [0,t]^n$. Then
\begin{align*}
B_n^{\alpha}(t,h) & \leq \Gamma_{0,t+h}^n \int_{[0,t+h]^n} \gamma_{t,h,n}^{\alpha}(\pmb{t_n})1_{D_{t,h}}(\pmb{t_n}) d\pmb{t_n},
\stepcounter{equation}\tag{\theequation}\label{rel1-20230909-3:28pm} 
\end{align*}
where
\[
\gamma_{t,h,n}^{\alpha}(\pmb{t_n}):= (n!)^2 \int_{(\bR^d)^n} 
\big|\cF \widetilde{f}_{t+h,x,n}(\pmb{t_n}, \cdot)(\pmb{\xi_n}) \big|^2  \prod_{j=1}^{n}|\xi_j|^{-\alpha}d\pmb{\xi_n}.
\]
Note that for any $\bold{t} \in D_{t,h}$, there exists a permutation $\rho$ of $\{1, \ldots, n \}$ such that $ t_{\rho(1)}<\ldots<t_{\rho(n)}< t+h$ and $t < t_{\rho(n)} < t+h$. For the estimate below, we assume for simplicity that $\rho$ is the identity, i.e. $0 < t_1 < \ldots < t_n$ and $t < t_n < t+h$. Then, by Lemma \ref{lem1-20230317-4:21pm} and using the fact that $w(t,x) \le C$ for all $t \in [t_0, T]$ and $x \in K$, we get:
\begin{align*}
&\big|\cF f_{t+h,x,n}(\pmb{t_n}, \cdot)(\pmb{\xi_n}) \big|^2 \le C \prod_{k = 1}^{n-1} \exp \Big( - \frac{t_{k+1} - t_{k}}{t_{k} t_{k+1}} \Big| \sum_{j = 1}^{k} t_{j} \xi_{j} \Big|^2  \Big) \exp \Big( - \frac{t+h - t_{n}}{t_{n}(t+h)} \Big| \sum_{j = 1}^{n} t_{j} \xi_{j} \Big|^2 \Big).
\end{align*}
A similar relation holds for arbitrary $\rho$, with $t_j$, $\xi_j$ replaced by $t_{\rho(j)}$, $\xi_{\rho(j)}$ respectively. 

Using Lemma \ref{lem1-20230324-4:38pm}, we have:
\begin{align*}
&\int_{(\bR^d)^n} 
\big|\cF f_{t+h,x,n}(\pmb{t_n}, \cdot)(\pmb{\xi_n}) \big|^2  \prod_{j=1}^{n}|\xi_j|^{-\alpha}d\pmb{\xi_n} \\
& \qquad \le C
\int_{(\bR^d)^n} \prod_{k = 1}^{n-1} \exp \Big( - \frac{t_{k+1} - t_{k}}{t_{k} t_{k+1}} \Big| \sum_{j = 1}^{k} t_{j} \xi_{j} \Big|^2  \Big) \exp \Big( - \frac{t+h - t_{n}}{t_{n}(t+h)} \Big| \sum_{j = 1}^{n} t_{j} \xi_{j} \Big|^2 \Big)  \prod_{j=1}^{n}|\xi_j|^{-\alpha}d\pmb{\xi_n}\\
& \qquad \le  C \prod_{j = 1}^{n-1} k_{\alpha}\bigg( \frac{2(t_{j+1} - t_j)t_j}{t_{j+1}} \bigg) \times k_{\alpha}\bigg(  \frac{2(t+h - t_{n})t_{n}}{t+h}  \bigg)\\
& \qquad \le C_1 \big[  t_1(t_2 - t_1) \ldots (t_{n} - t_{n-1})  \big]^{-(d-\alpha)/2} \Big( \frac{t+h - t_n}{t+h} \Big)^{-(d-\alpha)/2},
\end{align*}
where we used \eqref{relation1-20230328-3:19pm} in the last inequality and $C_1 > 0$ is a constant that depends on $d$, $t_0$, $a$ and $b$.

We come back to \eqref{rel1-20230909-3:28pm}. Using the change of variable $s = t+h - t_n$, we have:
\begin{align*}
B_n^{\alpha}(t,h) &\le
\Gamma^n_{0,t+h} C_1 \, n!  \int_{t}^{t+h} \Big( \frac{t+h - t_n}{t+h} \Big)^{-(d-\alpha)/2} d t_{n}\\
&\qquad \qquad \Big( \int_{0< t_{1} < \ldots < t_{n}} 
\big[  t_1(t_2 - t_1) \ldots (t_{n} - t_{n-1})  \big]^{-(d-\alpha)/2} d t_1 \ldots  d t_{n-1} \Big)\\ 
&\le \Gamma^n_{0,t+h} C_1 \, n! \frac{\Gamma \Big( 1 - \frac{d-\alpha}{2} \Big)^{n}}{\Gamma \Big( n\big( 1 - \frac{d-\alpha}{2} \big) \Big)}
\int_{t}^{t+h} t_n^{(n-1)\big( 1 - \frac{d-\alpha}{2} \big) - \frac{d-\alpha}{2} } \Big( \frac{t+h - t_n}{t+h} \Big)^{-(d-\alpha)/2} d t_{n}\\
&= \Gamma^n_{0,t+h} C_1 \, n! \frac{\Gamma \Big( 1 - \frac{d-\alpha}{2} \Big)^{n}}{\Gamma \Big( n\big( 1 - \frac{d-\alpha}{2} \big) \Big)} \Big(\frac{1}{t+h} \Big)^{-(d-\alpha)/2}  \int_{0}^{h} (t+h - s)^{n\big( 1 - \frac{d-\alpha}{2} \big) - 1 } s^{-(d-\alpha)/2} d s\\
&\le \Gamma^n_{0,t+h} C_1 \, n! \frac{\Gamma \Big( 1 - \frac{d-\alpha}{2} \Big)^{n}}{\Gamma \Big( n\big( 1 - \frac{d-\alpha}{2} \big) \Big)} (t+h)^{(d-\alpha)/2} T^{n\big( 1 - \frac{d-\alpha}{2} \big) - 1 } \int_{0}^{h} s^{-(d-\alpha)/2} d s\\
&\le \Gamma^n_{0,t+h} C_2 \, n! \frac{\Gamma \Big( 1 - \frac{d-\alpha}{2} \Big)^{n}}{\Gamma \Big( n\big( 1 - \frac{d-\alpha}{2} \big) \Big)}  T^{(n-1)\big( 1 - \frac{d-b}{2} \big)} h^{1 - \beta},
\end{align*}
where in the last inequality we used the fact that $h^{1 -(d-\alpha)/2} \le h^{1 - \beta}$ since $\beta > \frac{d - a}{2} > \frac{d - \alpha}{2}$. Note that the preceding expression can be uniformly bounded for all $\alpha \in [a, b]$. Hence, we obtain:
\begin{align*}
\sup_{\alpha \in [a,b]} \sum_{n \ge 1} (p-1)^{n/2} \Big( \frac{1}{n!} B^{(\alpha)}_n(t,h) \Big)^{1/2} \le C \, h^{(1-\beta)/2},
\end{align*}
where $C$ is a constant depending on ($d$, $t_0$, $T$, $p$, $a$, $b$).


{\bf Step 2 (space increments).} We denote $z = x' - x$. By hypercontractivity,
\[
\|u^{\alpha}(t,x+z)-u^{\alpha}(t,x)\|_p \leq \sum_{n\geq 1}(p-1)^{n/2} \|I_{n}^{\alpha}(f_{t,x+z,n}-f_{t,x,n})\|_p
\leq \sum_{n\geq 1}(p-1)^{n/2} \left( \frac{1}{n!} C_{n}^{\alpha}(t,z)\right)^{1/2},
\]
where $C_{n}^{\alpha}(t,z)=(n!)^2 \| \widetilde{f}_{t,x+z,n}-\widetilde{f}_{t,x,n}
\|_{\cH_{\alpha}^{\otimes n}}^{2}$. Note that
\begin{align*}
C_{n}^{\alpha}(t,z) & \leq \Gamma_{0,t}^n \int_{[0,t]^n}\psi_{t,z,n}^{\alpha}(\pmb{t_n}) d\pmb{t_n},
\stepcounter{equation}\tag{\theequation}\label{rel3-20230909-9:07pm} 
\end{align*}
where we fixed $\pmb{t_{n}} \in [0,t]^n$ and $\rho$ is the permutation of $1,\ldots,n$ such that $t_{\rho(1)}<\ldots<t_{\rho(n)}$ with $t_{\rho(n+1)}=t$ and
\[
\psi_{t,z,n}^{\alpha}(\pmb{t_n}) := 
\int_{(\bR^d)^n} \Big|\cF \Big(f_{t,x+z,n}(t_{\rho(1)}, \cdot, \ldots, t_{\rho(1)}, \cdot) - f_{t,x,n}(t_{\rho(1)}, \cdot, \ldots, t_{\rho(1)}, \cdot) \Big)(\pmb{\xi_n}) \Big|^2 
\prod_{j=1}^{n}|\xi_j|^{-\alpha}d\pmb{\xi_n}.
\]

Assume for simplicity that $t_1 < \ldots < t_n$. By Lemma \ref{lem1-20230317-4:21pm}, we have:
\begin{align*}
\cF \big(f_{t,x+z,n}(\pmb{t_n}, \cdot) - f_{t,x,n}(\pmb{t_n}, \cdot)\big)(\pmb{\xi_n}) = \prod_{k = 1}^{n} \exp \Big( -\frac{1}{2} \frac{t_{k+1} - t_{k}}{t_{k} t_{k+1}} \Big| \sum_{j = 1}^{k} t_{j} \xi_{j} \Big|^2 \Big) (J_1 + J_2)
\end{align*}
where
\begin{align*}
J_1 &:= \bigg( \exp \Big( -\frac{i}{t} \Big( \sum_{j = 1}^{n} t_{j} \xi_{j}\Big) \cdot (x+z) \Big) - \exp \Big( -\frac{i}{t} \Big( \sum_{j = 1}^{n} t_{j} \xi_{j}\Big) \cdot x \Big) \bigg)\\
& \qquad \quad \int_{\bR^d}  \exp\Big( -i \Big[ \sum_{j = 1}^{n} \Big( 1 - \frac{t_{j}}{t} \Big) \xi_{j} \Big] \cdot x_0  \Big) \, G_{t}(x + z-x_0) u_0(\mathrm{d}x_0)\\
J_2 &:=  \exp \Big( -\frac{i}{t} \Big( \sum_{j = 1}^{n} t_{j} \xi_{j}\Big) \cdot x \Big)\\
& \qquad \quad  \int_{\bR^d}  \exp\Big( -i \Big[ \sum_{j = 1}^{n} \Big( 1 - \frac{t_{j}}{t} \Big) \xi_{j} \Big] \cdot x_0  \Big) \, \Big( G_{t}(x + z - x_0) - G_{t}(x -x_0) \Big) u_0(\mathrm{d}x_0).
\end{align*}
For the term $J_1$, using the fact that $|1 - e^{-ix}|^2 \le x^{2\theta}$, for any $\theta \in [0,1]$, we get
\begin{align*}
J^2_1 &\le \bigg| \exp \Big( -\frac{i}{t} \Big( \sum_{j = 1}^{n} t_{j} \xi_{j}\Big) \cdot z \Big) - 1 \bigg|^2 w_{+}^2(t,x+z)
\le C_1 \Big| \sum_{j = 1}^{n} t_{j} \xi_{j} \Big|^{2\theta} |z|^{2\theta},
\end{align*}
where $C_1 > 0$ is a constant depends on $t_0$, $d$, $\theta$ and $T$. Note that $|w_{+}(t,x+z)| = |w_{+}(t,x')| \le C$ for all $x' \in K$. 

We now turn to the analysis of $J_2$. We use the following inequality (see Lemma 4.1 of \cite{CH2019}): for any $\alpha \in (0,1]$, 
$$\big|G_t(x) - G_t(y) \big| \le \frac{C_{\alpha}}{t^{\alpha/2}} \big[  G_{2t}(x) + G_{2t}(y)  \big] |x - y|^{\alpha},$$
for any $t > 0$ and $x , y \in \bR^d$, where $C_{\alpha} > 0$ is a constant depending on $\alpha$. Using the above inequality, we have
\begin{align*}
J^2_2 &\le \Big|  \int_{\bR^d}  \exp\Big( -i \Big[ \sum_{j = 1}^{n} \Big( 1 - \frac{t_{j}}{t} \Big) \xi_{j} \Big] \cdot x_0  \Big) \, \Big( G_{t}(x + z - x_0) - G_{t}(x -x_0) \Big) u_0(\mathrm{d}x_0) \Big|^2\\
&\le \Big(  \int_{\bR^d}  \frac{C_{\theta}}{t^{\theta/2}} \big[ G_{2t}(x + z - x_0) + G_{2t}(x -x_0) \big]  |z|^{\theta} u_0(\mathrm{d}x_0)  \Big)^2 \le C_2 |z|^{2 \theta}.
\end{align*}
Here we used the fact that $w(2t, \cdot)$ is uniformly bounded on compact sets and $t \ge t_0$.

Hence, there exists a constant $C$ that depends on $t_0$, $d$, $\theta$ and $T$ such that 
\begin{align*}
&\Big|\cF \big(f_{t,x+z,n}(\pmb{t_n}, \cdot) - f_{t,x,n}(\pmb{t_n}, \cdot)\big)(\pmb{\xi_n}) \Big|^2  \le |z|^{2\theta} C ( F_1 + F_2 ),
\stepcounter{equation}\tag{\theequation}\label{rel1-20230909-9pm} 
\end{align*}
where
$$
F_1 := \prod_{k = 1}^{n} \exp \Big( - \frac{t_{k+1} - t_{k}}{t_{k} t_{k+1}} \Big| \sum_{j = 1}^{k} t_{j} \xi_{j} \Big|^2 \Big)  \Big| \sum_{j = 1}^{n} t_{j} \xi_{j} \Big|^{2\theta}
$$
and
$$
F_2 := \prod_{k = 1}^{n} \exp \Big( - \frac{t_{k+1} - t_{k}}{t_{k} t_{k+1}} \Big| \sum_{j = 1}^{k} t_{j} \xi_{j} \Big|^2 \Big).
$$

We only need to study $F_1$ since the term involving $F_2$ is uniformly bounded by Lemma \ref{lem1-20230324-4:38pm}. Using \eqref{relation1-20230420-11:39pm} and \eqref{relation2-20230420-11:39pm}, we have:
\begin{align*}
F_1 \le \prod_{k = 1}^{n} \exp \Big( -\frac{1}{2} \frac{t_{k+1} - t_{k}}{t_{k} t_{k+1}} \Big| \sum_{j = 1}^{k} t_{j} \xi_{j} \Big|^2 \Big) C_{\theta} T^{\theta} (t - t_n)^{- \theta}.
\stepcounter{equation}\tag{\theequation}\label{rel2-20230909-9pm} 
\end{align*}

By \eqref{rel1-20230909-9pm}, \eqref{rel2-20230909-9pm} and Lemma \ref{lem1-20230324-4:38pm}, we get:
\begin{align*}
\psi_{t,z,n}^{\alpha}(\pmb{t_n}) &\le |z|^{2\theta} C_3 \bigg( (t - t_n)^{- \theta} \prod_{j = 1}^{n} k_{\alpha}\Big( \frac{2(\frac{t_{j+1}}{2} - \frac{t_j}{2})(\frac{t_j}{2})}{\frac{t_{j+1}}{2}} \Big)
+ \prod_{j = 1}^{n} k_{\alpha}\Big( \frac{2(t_{j+1} - t_j)t_j}{t_{j+1}} \Big) 
\bigg)\\
&\le |z|^{2\theta} C_4 \bigg( \big[  t_1(t_2 - t_1) \ldots (t - t_n)  \big]^{-(d-\alpha)/2}(t - t_n)^{- \theta}  + \big[  t_1(t_2 - t_1) \ldots (t - t_n)  \big]^{-(d-\alpha)/2} \bigg)
\end{align*}
where we used relation \eqref{relation1-20230328-3:19pm} in the last inequality and $C_3$, $C_4$ are constants depending on $t_0$, $d$, $a$, $b$, $\theta$ and $T$. 

We come back to \eqref{rel3-20230909-9:07pm}. Similarly to \eqref{rel4-20230909-9:13pm}, using the estimate of $\psi_{t,z,n}^{\alpha}(\pmb{t_n})$ above, we have:
\begin{align*}
C^{(\alpha)}_n(t,z) &\le |z|^{2 \theta} \Gamma^n_{0,t} n! C_5 \bigg(
 \frac{ \Gamma \big( 1 - \frac{d-\alpha}{2} \big)^{n}}{\Gamma \big( n\big( 1 - \frac{d-\alpha}{2} \big) \big)} (T \vee 1)^{n(1 - \frac{d-\alpha}{2}) - \theta}
 + \frac{ \Gamma \big( 1 - \frac{d-\alpha}{2} \big)^{n+1}}{\Gamma \big( (n+1)\big( 1 - \frac{d-\alpha}{2} \big) \big)} (T \vee 1)^{n(1 - \frac{d-\alpha}{2})} \bigg)
\end{align*}
where requires again that $\beta \in (\frac{d - a}{2}, 1)$. Therefore,
\begin{align*}
\sup_{\alpha \in [a,b]} \sum_{n \ge 1} (p-1)^{n/2} \Big( \frac{1}{n!} C^{(\alpha)}_n(t,z) \Big)^{1/2} \le C |z|^{1-\beta},
\end{align*}
where $C$ is a constant depending on $d$, $t_0$, $T$, $p$, $a$, $b$ and $\beta$.
\end{proof}


Now we are ready to prove Theorem \ref{main-th1}. Note that this proof employs similar concepts as those found in the proof of Theorem 2.1 of \cite{BL2023}.

\noindent{\bf Proof of Theorem \ref{main-th1}}:
{\em Step 1. (finite-dimensional distribution convergence)} In this step, we prove that: for any $(t_1,x_1),\ldots,
(t_k,x_k)\in [t_0,T] \times \bR^d$,
\[
\big(u^{\alpha_n}(t_1,x_1),\ldots,u^{\alpha_n}(t_k,x_k)\big)
\stackrel{d}{\to}
\big(u^{\alpha^*}(t_1,x_1),\ldots,u^{\alpha^*}(t_k,x_k)\big).
\]
It is enough to prove that $u^{\alpha_n}(t,x) \to u^{\alpha^*}(t,x)$ in $L^2(\Omega)$ as $n\to \infty$, for any $(t,x) \in [t_0,T] \times \bR^d$. To do this, we approximate $u^{\alpha}(t,x)$ by the partial sum:
\begin{equation}
\label{eqn1-20230605-2:38pm}
u_m^{\alpha}(t,x)=w(t,x)+\sum_{k=1}^{m}I_k^{\alpha}(f_{t,x,k}).
\end{equation}
Note that for any $m\geq 1$, by Lemma \ref{Ik-conv}, 
\begin{align*}
\bE|u_m^{\alpha_n}(t,x)-u_m^{\alpha^*}(t,x)|^2 \leq m\sum_{k=1}^{m}
\bE|I_{k}^{\alpha_n}(f_{t,x,k})-I_{k}^{\alpha^*}(f_{t,x,k})|^2 \quad \mbox{as $n\to \infty$}.
\end{align*}
Moreover, $\bE|u_m^{\alpha^{\ast}}(t,x)-u^{\alpha^{\ast}}(t,x)|^2 \to 0, \text{ as } m \to \infty$. 

It remains to prove that
$$\sup_{n \ge 1} \bE|u_m^{\alpha_n}(t,x)-u^{\alpha_n}(t,x)|^2 \to 0, \text{ as } m \to \infty.$$
To see this, we choose specific values for $a$ and $b$ such that $\max(d-2,0)<a<\alpha^*<b<d$. Since $\alpha_n \to \alpha^*$, there exists $N \in \bN$ such that $a<\alpha_n<b$ for any $n\geq N$. Therefore, it suffices to prove that:
\begin{equation}
\label{uniform}
\sup_{\alpha \in [a,b]}\bE|u_m^{\alpha}(t,x)-u^{\alpha}(t,x)|^2
=\sup_{\alpha \in [a,b]} \sum_{k\geq m+1}\bE|I_k^{\alpha}(f_{t,x,k})|^2
\to 0 \quad \mbox{as $m \to \infty$.}
\end{equation}

Note that
\[
\bE|I_k^{\alpha}(f_{t,x,k})|^2 =k! \|\widetilde{f}_{t,x,k} \|_{\cH_{\alpha}^{\otimes k}}^{2} \leq  k! \Gamma_{0,t}^k
\int_{[0,t]^n}A_k^{\alpha}(\pmb{t_k})d\pmb{t_k},
\]
where $\Gamma_{0,t}=\int_{-t}^t \gamma_0(s)ds$ and
$
A_{k}^{\alpha}(\pmb{t_k})=\int_{(\bR^d)^k}
\big|\cF \widetilde{f}_{t,x,k}(\pmb{t_k},\bullet)(\pmb{\xi_k})\big|^2 \prod_{j=1}^k |\xi_j|^{-\alpha}d\pmb{\xi_k}$.
We denote $\pmb{t_k}=(t_1,\ldots,t_k) \in [0,t]^k$ be arbitrary and  $\rho$ be a permutation of $1,\ldots,k$ such that $t_{\rho(1)}<\ldots<t_{\rho(k)}$, with $t_{\rho(k+1)}=t$. Then by Lemma \ref{lem1-20230324-4:38pm}, we have:
\begin{align*}
&A_{k}^{\alpha}(\pmb{t_k})\\
&=\frac{1}{(k!)^2}
\int_{(\bR^d)^k} \bigg| \prod_{j = 1}^{k} \exp \bigg\{ -\frac{1}{2} \frac{t_{\rho(j+1)} - t_{\rho(j)}}{t_{\rho(j)} t_{\rho((j+1)}} \Big| \sum_{i = 1}^{j} t_{\rho(i)} \xi_{\rho(i)} \Big|^2 \bigg\}
\exp \bigg\{ -\frac{i}{t} \Big( \sum_{j = 1}^{k} t_{\rho(j)} \xi_{\rho(j)}  \Big) \cdot x \bigg\}\\
&\qquad \quad \times \int_{\bR^d} \exp\Big\{ -i \Big[ \sum_{j = 1}^{k} \Big( 1 - \frac{t_{\rho(j)}}{t} \Big) \xi_{\rho(j)} \Big] \cdot x_0  \Big\} G_t(x-x_0) u_0(\mathrm{d}x_0) \bigg|^2 \prod_{j = 1}^{k} |\xi_j|^{-\alpha}d\pmb{\xi_k}\\ 
&\leq \frac{w_{+}^2(t,x) }{(k!)^2} \prod_{j=1}^{k}k_{\alpha} \Big( \frac{2(t_{\rho(j+1)} - t_{\rho(j)}) t_{\rho(j)}}{t_{\rho(j+1)}} \Big)\\ 
&= \frac{w_{+}^2(t,x) (C^{(1)}_{d,\alpha})^k 2^{-(d-\alpha)k/2}}{(k!)^2} t^{\frac{d - \alpha}{2}} 
[t_{\rho(1)} (t_{\rho(2)} - t_{\rho(1)}) \ldots (t - t_{\rho(k)})]^{-\frac{d - \alpha}{2}},
\end{align*}
where $k_{\alpha}$ and $C^{(1)}_{d,\alpha}$ are given in \eqref{k_alpha_constant}, respectively \eqref{relation1-20230328-3:19pm}. Using Lemma \ref{lem1-20230328-3:39pm}, we get:
\begin{align*}
\bE|I_k^{\alpha}(f_{t,x,k})|^2 & \leq C k! \, \Gamma_{0,t}^k (C^{(1)}_{d,\alpha})^k 2^{-(d-\alpha)k/2} 
t^{\frac{d - \alpha}{2}} \\
&\qquad \qquad \qquad  \frac{1}{(k!)^2}\sum_{\rho \in S_n}\int_{t_{\rho(1)} < \ldots < t_{\rho(k)}} [t_{\rho(1)} (t_{\rho(2)} - t_{\rho(1)}) \ldots (t - t_{\rho(k)})]^{-\frac{d - \alpha}{2}} d\pmb{t_k}\\
&= C \, \Gamma_{0,t}^k (C^{(1)}_{d,\alpha})^k 2^{-(d-\alpha)k/2} t^{k(1 - \frac{d-\alpha}{2}) } 
\frac{ \big( \Gamma(1 - \frac{d-\alpha}{2}) \big)^{k+1}}{\Gamma \Big(   (k+1)  \big(1 - \frac{d-\alpha}{2} \big)\Big)}.
\end{align*}
The last thing we need to do is to uniformly bound above expression for all $\alpha \in [a,b]$. Note that for any $\alpha \in [a,b]$,
$$C^{(1)}_{d,\alpha} \le c_d \Big( \frac{1}{d-b} + \frac{2}{ 2 - (d - a)} \Big) =: C_{d,a,b}$$
and
$$2^{-(d-\alpha)k/2} \le 2^{-(d-b)k/2}.$$
Moreover, $\Gamma(1 - \frac{d-\alpha}{2}) \le \Gamma(1 - \frac{d-a}{2})$, $t^{k(1 - \frac{d-\alpha}{2}) } \le (t \vee 1)^{k(1 - \frac{d-b}{2}) }$ and 
$$ \Gamma \Big(   (k+1)  \big(1 - \frac{d-\alpha}{2} \big)\Big) \geq \Gamma \Big(   (k+1)  \big(1 - \frac{d-a}{2} \big)\Big) \quad \mbox{for any} \ k \geq m_0,$$ 
where $m_0$ is chosen such that $m_0(1-\frac{d-a}{2})>x_0$, and therefore for any $m\geq m_0$,
\begin{align*}
&\sup_{\alpha \in [a,b]} \sum_{k\ge m+1} \mathbb{E}\big| I^{\alpha}_k(f_{t,x,k}) \big|^2  \le \sum_{k\ge m+1} C \Gamma^k_{0,T} C^k_{d,a,b} \frac{ \big( \Gamma(1 - \frac{d-a}{2}) \big)^{k+1}}{\Gamma \Big(   (k+1)  \big(1 - \frac{d-a}{2} \big)\Big)} (t \vee 1)^{k(1 - \frac{d-b}{2}) } \to 0.
\stepcounter{equation}\tag{\theequation}\label{sum-m-heat} 
\end{align*}
Relation \eqref{uniform} follows.

{\em Step 2. (tightness)} The fact that the sequence $(u^{\alpha_n})_{n\geq 1}$ is tight in $C([0,T] \times \bR^d)$ follows by Proposition 2.3 of \cite{yor}, using Theorem \ref{Th-uniform-C} above.

\qed


\section{Rough noise}
\label{Rough-noise}

In this section, we consider equation \eqref{PAM} driven by a Gaussian noise $W$ where the temporal covariance function $\gamma_0$ is given by \eqref{temporal-cov}, and the measure $\mu$ is given by \eqref{def-mu2}. The existence of the solution was proved in \cite{BCY2022}.

To emphasize the dependence on the parameter $H$, we denote the noise, the Hilbert space, and the solution, by $W^{H},\cH^{H},u^{H}$, respectively. Denoting the multiple integral of order $n$ with respect to $W^{H}$ as $I_n^{H}$, the series expansion \eqref{series} takes the following form:
\[
u^{H}(t,x)=1+\sum_{n\geq 1}I_{n}^{H}(f_{t,x,n}).
\]

Similarly to Section 4 of \cite{BL2023}, all Gaussian processes $\{ W^{H} \}_{H \in (0,1/2)}$ can be constructed on the same probability space. Let $\widehat{W}$ be the $\bC$-valued Gaussian measure given by \eqref{def-W-hat}. 
We recall the following representation of the multiple integral with respect to the process $W^{H}$: for any $t \mapsto \varphi(t, \cdot) \in \mathcal{S}'(\bR^d)$,
\begin{equation}
\label{In-H}
I_k^{H}(\varphi)=c_{H_0}^{k/2} c_H^{k/2}\int_{(\bR \times \bR)^k} \cF_t[\cF_x \varphi(\pmb{t_k},\bullet)(\pmb{\xi_k})](\pmb{\tau_k})
\prod_{j=1}^{k}|\tau_j|^{\frac{1}{2}-H_0}|\xi_j|^{\frac{1}{2}-H}
\widehat{W}(d\tau_1,d\xi_1) \ldots \widehat{W}(d\tau_k,d\xi_k).
\end{equation}

We will use the following lemmas.

\begin{lemma}[Lemma 3.1 of \cite{ref5}]
\label{lemma2-20220530-4:52}
Let $G$ be the fundamental solution of the heat equation, the integral $\int_{\bR} |\mathcal{F} G_t(\xi)|^2 |\xi|^{\alpha} \mathrm{d}\xi$ is finite if and only if $\alpha > -1$ and in this case,
$$\int_{\bR} |\mathcal{F} G_t(\xi) |^2 |\xi|^{\alpha} \mathrm{d}\xi = \Gamma \Big( \frac{1+\alpha}{2} \Big) \ 
t^{-(1+\alpha)/2}.$$
\end{lemma}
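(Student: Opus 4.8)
The plan is to reduce the statement to an explicit one-dimensional Gaussian integral and evaluate it directly. First I would recall that in dimension $d=1$ the Fourier transform of the heat kernel is $\mathcal{F}G_t(\xi)=e^{-t|\xi|^2/2}$, so that $|\mathcal{F}G_t(\xi)|^2=e^{-t\xi^2}$ and the quantity under study is $\int_{\bR}e^{-t\xi^2}|\xi|^{\alpha}\,\mathrm{d}\xi$. By symmetry of the integrand in $\xi$, this equals $2\int_0^{\infty}e^{-t\xi^2}\xi^{\alpha}\,\mathrm{d}\xi$.

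Next I would settle the convergence criterion by splitting the domain into $\{|\xi|\le 1\}$ and $\{|\xi|>1\}$. On $\{|\xi|>1\}$ the factor $e^{-t\xi^2}$ decays faster than any power of $|\xi|$, so that part of the integral is finite for every $\alpha\in\bR$; on $\{|\xi|\le 1\}$ the factor $e^{-t\xi^2}$ is bounded above and below by positive constants, so the integral is finite there if and only if $\int_0^1\xi^{\alpha}\,\mathrm{d}\xi<\infty$, i.e. if and only if $\alpha>-1$. This establishes the ``if and only if'' part.

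Finally, assuming $\alpha>-1$, I would compute the value via the substitution $u=t\xi^2$, so $\xi=(u/t)^{1/2}$ and $\mathrm{d}\xi=\tfrac12 t^{-1/2}u^{-1/2}\,\mathrm{d}u$, which turns $2\int_0^{\infty}e^{-t\xi^2}\xi^{\alpha}\,\mathrm{d}\xi$ into $t^{-(1+\alpha)/2}\int_0^{\infty}e^{-u}u^{(\alpha-1)/2}\,\mathrm{d}u=\Gamma\!\big(\tfrac{1+\alpha}{2}\big)\,t^{-(1+\alpha)/2}$, where the Gamma integral converges exactly because $(\alpha-1)/2>-1\iff\alpha>-1$, consistently with the previous step. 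There is no genuine obstacle here: the only points requiring mild care are treating the $\xi\to 0$ and $\xi\to\infty$ regimes separately when justifying the convergence condition, and tracking the powers of $t$ through the change of variables; alternatively one may simply invoke the standard identity $\int_0^{\infty}e^{-au^2}u^{s-1}\,\mathrm{d}u=\tfrac12 a^{-s/2}\Gamma(s/2)$ with $s=\alpha+1$ and $a=t$.
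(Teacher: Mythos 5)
Your computation is correct: with $\mathcal{F}G_t(\xi)=e^{-t|\xi|^2/2}$ the integral is the elementary Gaussian moment $2\int_0^\infty e^{-t\xi^2}\xi^\alpha\,\mathrm{d}\xi=t^{-(1+\alpha)/2}\Gamma\big(\tfrac{1+\alpha}{2}\big)$, and your convergence analysis at $\xi\to 0$ and $\xi\to\infty$ correctly yields the criterion $\alpha>-1$. The paper imports this statement from the cited reference without proof, and your direct change-of-variables argument is the standard derivation of that result.
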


\begin{lemma}[Lemma A.1 in \cite{BCY2022}]
\label{lem1-20230823-11:07am}
Suppose that $\alpha_i > -1$, $\beta_i > -1$ for any $i = 1, \ldots, n$, and
\begin{equation}
\label{condition-20230831-5:14pm}
\sum_{i = 1}^{k}(\alpha_i + \beta_i) + k + 1 + \alpha_{k+1} > 0 \, \text{ for all } k = 1, \ldots, n-1.
\end{equation}
Then by setting $t_{n+1} = t$, $|\alpha| = \sum_{i = 1}^{n} \alpha_i$ and $|\beta| = \sum_{i = 1}^{n} \beta_i$, we have that 
\begin{align*}
&I_n(t, \alpha_1, \ldots, \alpha_n, \beta_1, \ldots, \beta_n) := \int_{0 < t_1 < \ldots < t_n < t} \prod_{i = 1}^{n} t_i^{\alpha_i} (t_{i+1} - t_i)^{\beta_i} \mathrm{d}\bold{t}\\
&= \frac{\Gamma(\alpha_1+1) \prod_{i = 1}^{n}\Gamma(\beta_i+1)}{\Gamma \big(|\alpha| + |\beta| + n + 1 \big)} \prod_{k = 1}^{n-1} 
\frac{\Gamma\big( \sum_{i = 1}^{k}(\alpha_i + \beta_i) + k + 1 + \alpha_{k+1} \big)}{\Gamma\big( \sum_{i = 1}^{k}(\alpha_i + \beta_i) + k + 1 \big)} t^{|\alpha| + |\beta| + n}. 
\end{align*}
\end{lemma}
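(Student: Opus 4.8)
The plan is to prove the identity by integrating successively in $t_1,t_2,\dots,t_n$, keeping track of the exponent carried by the innermost remaining variable. The only analytic ingredient is the elementary Beta integral: for $p>-1$, $q>-1$ and $b>0$,
\[
\int_0^b s^p(b-s)^q\,\mathrm{d}s=\frac{\Gamma(p+1)\,\Gamma(q+1)}{\Gamma(p+q+2)}\,b^{\,p+q+1},
\]
obtained from the substitution $s=bu$; in particular the case $n=1$ of the lemma is exactly this identity, the product $\prod_{k=1}^{0}$ being empty.

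For general $n$, put $c_0:=0$ and $c_k:=\sum_{i=1}^{k}(\alpha_i+\beta_i)+k$ for $1\le k\le n$, and adopt the convention $t_{n+1}=t$. One checks, by induction on $k$, that after integrating out $t_1,\dots,t_k$ the expression equals $\prod_{j=1}^{k}\frac{\Gamma(c_{j-1}+\alpha_j+1)\,\Gamma(\beta_j+1)}{\Gamma(c_j+1)}$ times
\[
\int_{0<t_{k+1}<\cdots<t_n<t}t_{k+1}^{\,c_k}\prod_{i=k+1}^{n}t_i^{\alpha_i}(t_{i+1}-t_i)^{\beta_i}\,\mathrm{d}\mathbf{t},
\]
the case $k=0$ being the original integral. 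In the inductive step one performs the $(k{+}1)$st integration
\[
\int_0^{t_{k+2}}t_{k+1}^{\,c_k+\alpha_{k+1}}(t_{k+2}-t_{k+1})^{\beta_{k+1}}\,\mathrm{d}t_{k+1},
\]
which, since $\beta_{k+1}>-1$ is given, converges precisely when $c_k+\alpha_{k+1}>-1$; and because $c_k+\alpha_{k+1}+1=\sum_{i=1}^{k}(\alpha_i+\beta_i)+k+1+\alpha_{k+1}$, this is exactly condition \eqref{condition-20230831-5:14pm} for $k=1,\dots,n-1$, while for $k=0$ it is the hypothesis $\alpha_1>-1$. Evaluating the Beta integral and using $c_{k+1}=c_k+\alpha_{k+1}+\beta_{k+1}+1$ produces the factor $\frac{\Gamma(c_k+\alpha_{k+1}+1)\,\Gamma(\beta_{k+1}+1)}{\Gamma(c_{k+1}+1)}$ and leaves $t_{k+2}$ with exponent $c_{k+1}$, advancing the induction. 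After the $n$th integration (over $t_n\in(0,t)$, since $t_{n+1}=t$) one is left with
\[
I_n(t,\alpha_1,\dots,\alpha_n,\beta_1,\dots,\beta_n)=\Big(\prod_{j=1}^{n}\frac{\Gamma(c_{j-1}+\alpha_j+1)\,\Gamma(\beta_j+1)}{\Gamma(c_j+1)}\Big)\,t^{\,c_n}.
\]

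It remains to reindex the Gamma factors: $\prod_{j=1}^{n}\Gamma(\beta_j+1)=\prod_{i=1}^{n}\Gamma(\beta_i+1)$ and $t^{c_n}=t^{|\alpha|+|\beta|+n}$; the numerator term $j=1$ is $\Gamma(c_0+\alpha_1+1)=\Gamma(\alpha_1+1)$ and, with $k=j-1$, the numerator terms $j=2,\dots,n$ are $\Gamma\big(\sum_{i=1}^{k}(\alpha_i+\beta_i)+k+1+\alpha_{k+1}\big)$; the denominator term $j=n$ is $\Gamma(c_n+1)=\Gamma(|\alpha|+|\beta|+n+1)$ and the denominator terms $j=1,\dots,n-1$ are $\Gamma\big(\sum_{i=1}^{j}(\alpha_i+\beta_i)+j+1\big)$. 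Assembling these reproduces the asserted formula. The argument is entirely bookkeeping; the point that needs care is to confirm that the convergence requirement $c_k+\alpha_{k+1}>-1$ met at the $(k{+}1)$st integration is the same as condition \eqref{condition-20230831-5:14pm} for $k=1,\dots,n-1$ — the endpoint constraints $\alpha_1>-1$ and $\beta_i>-1$ being exactly the ones supplied directly in the hypotheses — and to telescope the numerator $\Gamma(c_{j-1}+\alpha_j+1)$ against the denominator $\Gamma(c_j+1)$ with the right index shift.
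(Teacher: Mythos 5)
Your proof is correct: the iterated Beta-integral induction, tracking the exponent $c_k=\sum_{i=1}^{k}(\alpha_i+\beta_i)+k$ carried by the innermost remaining variable and observing that the convergence requirement $c_k+\alpha_{k+1}>-1$ at each step is exactly condition \eqref{condition-20230831-5:14pm}, is the standard argument for this identity, and your telescoping of the Gamma factors reproduces the stated formula exactly. The paper itself gives no proof, quoting the lemma from \cite{BCY2022}, so there is nothing to compare against beyond noting that your route is the expected one.
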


We will use a similar approximation technique as in Section \ref{Regular-noise}. More precisely, we study first the $L^2(\Omega)$-continuity in $H$ of the multiple integral of $I_k^H(f_{t,x,k})$ for fixed $k$. This is achieved by the following lemma. Note that the proof is the same as in the proof of Lemma 4.2 of \cite{BL2023}, the difference being that here we use a different expression for the Fourier transform $\cF f_{t,x,k}(\pmb{t_k},\cdot)(\pmb{\xi_k})$, due to the general initial condition. The following inequality is also required, which is a consequence of the Littlewood-Hardy inequality: for any $\varphi \in L^{1/H_0}(\bR_{+}^k)$,
\begin{equation}
\label{LH-ineq}
\alpha_{H_0}^k \int_{\bR_{+}^k} \int_{\bR_{+}^k}
\prod_{j=1}^{k}|t_j-s_j|^{2H_0-2} \varphi(\pmb{t_k})
\varphi(\pmb{s_k}) d\pmb{t_k}d\pmb{s_k} \leq b_{H_0}^k\left(
\int_{\bR_{+}^k}|\varphi(\pmb{t_k})|^{1/H_0} d\pmb{t_k} \right)^{2H_0},
\end{equation}
where $b_{H_0}>0$ depends only on $H_0$.

\begin{lemma}
\label{rough-conv-Ik}
Let $\ell= \max(3/4-H_0,0)$.
If $H_n \to H^* \in (\ell,1/2)$, then
\[
Q_n:=\bE|I_k^{H_n}(f_{t,x,k})-I_k^{H^*}(f_{t,x,k})|^2 \to 0, \quad \mbox{as $n\to \infty$},
\]
for any $t>0$, $x \in \bR$ and $k\geq 1$.
\end{lemma}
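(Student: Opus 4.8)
The plan is to follow the scheme of Lemma \ref{Ik-conv} and of Lemma 4.2 of \cite{BL2023}, adapting the spatial estimates to the rough spectral weight $|\xi|^{1-2H}$ via Lemma \ref{lemma2-20220530-4:52}, and controlling the temporal covariance by the Hardy--Littlewood inequality \eqref{LH-ineq} together with the simplex integral of Lemma \ref{lem1-20230823-11:07am} (rather than by the crude constant $\Gamma_{0,t}$, which would not capture the sharp constraint $H_0+H>3/4$). By the representation \eqref{In-H},
\[
I_k^{H_n}(f_{t,x,k})-I_k^{H^*}(f_{t,x,k})=\int_{(\bR\times\bR)^k}H_k^{(n)}(\pmb{\tau_k},\pmb{\xi_k})\,\widehat{W}(d\tau_1,d\xi_1)\cdots\widehat{W}(d\tau_k,d\xi_k),
\]
where, with $\phi_{\pmb{\xi_k}}$ as in \eqref{def-phi-k},
\[
H_k^{(n)}(\pmb{\tau_k},\pmb{\xi_k})=c_{H_0}^{k/2}\,\cF\phi_{\pmb{\xi_k}}(\pmb{\tau_k})\prod_{j=1}^{k}|\tau_j|^{\frac12-H_0}\Big(c_{H_n}^{k/2}\prod_{j=1}^{k}|\xi_j|^{\frac12-H_n}-c_{H^*}^{k/2}\prod_{j=1}^{k}|\xi_j|^{\frac12-H^*}\Big).
\]
Using the isometry of the multiple integral (as in Lemma 3.3 of \cite{BL2023}, at the cost of a factor $k!$ and of passing to symmetrized kernels), then integrating out $\pmb{\tau_k}$ via Parseval's identity, which converts $c_{H_0}^k\prod_j|\tau_j|^{1-2H_0}d\pmb{\tau_k}$ into $\prod_j\gamma_0(t_j-s_j)$, and observing that the bracketed factor depends only on $\pmb{\xi_k}$, I get
\[
Q_n\le k!\int_{\bR^k}\Phi(\pmb{\xi_k})\,\Big|c_{H_n}^{k/2}\prod_{j=1}^{k}|\xi_j|^{\frac12-H_n}-c_{H^*}^{k/2}\prod_{j=1}^{k}|\xi_j|^{\frac12-H^*}\Big|^{2}d\pmb{\xi_k},
\]
where $\Phi(\pmb{\xi_k}):=\alpha_{H_0}^k\int_{[0,t]^{2k}}\phi_{\pmb{\xi_k}}(\pmb{t_k})\,\overline{\phi_{\pmb{\xi_k}}(\pmb{s_k})}\prod_{j=1}^{k}|t_j-s_j|^{2H_0-2}\,d\pmb{t_k}\,d\pmb{s_k}\ge 0$ (it equals $\int_{\bR^k}|\cF\phi_{\pmb{\xi_k}}(\pmb{\tau_k})|^2c_{H_0}^k\prod_j|\tau_j|^{1-2H_0}d\pmb{\tau_k}$).

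Next I would invoke the Dominated Convergence Theorem in the variable $\pmb{\xi_k}$. The integrand converges to $0$ pointwise, since $H\mapsto c_H$ is continuous and $|\xi|^{\frac12-H_n}\to|\xi|^{\frac12-H^*}$ for each $\xi$. Fix $a,b$ with $\ell<a<H^*<b<1/2$; then $a\le H_n\le b$ for all $n$ large, and since $\sup_{H\in[a,b]}c_H<\infty$ one has, uniformly in $H\in[a,b]$,
\[
\Big|c_{H}^{k/2}\prod_{j=1}^{k}|\xi_j|^{\frac12-H}\Big|^{2}\le C^{k}\,\nu(\pmb{\xi_k}),\qquad \nu(\pmb{\xi_k}):=\prod_{j=1}^{k}\big(|\xi_j|^{1-2b}1_{\{|\xi_j|\le1\}}+|\xi_j|^{1-2a}1_{\{|\xi_j|>1\}}\big).
\]
Hence the integrand is bounded, uniformly in $n$, by $C^k\Phi(\pmb{\xi_k})\nu(\pmb{\xi_k})$, so it remains to show $\int_{\bR^k}\Phi(\pmb{\xi_k})\nu(\pmb{\xi_k})\,d\pmb{\xi_k}<\infty$.

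For this finiteness, using $\gamma_0\ge0$ to replace $\phi_{\pmb{\xi_k}}$ by $|\phi_{\pmb{\xi_k}}|$, Tonelli's theorem, and the Cauchy--Schwarz inequality in $L^2(\nu)$, I bound
\[
\int_{\bR^k}\Phi(\pmb{\xi_k})\nu(\pmb{\xi_k})\,d\pmb{\xi_k}\le\alpha_{H_0}^k\int_{[0,t]^{2k}}g(\pmb{t_k})\,g(\pmb{s_k})\prod_{j=1}^{k}|t_j-s_j|^{2H_0-2}d\pmb{t_k}\,d\pmb{s_k},\quad g(\pmb{t_k}):=\Big(\int_{\bR^k}|\phi_{\pmb{\xi_k}}(\pmb{t_k})|^2\nu(\pmb{\xi_k})\,d\pmb{\xi_k}\Big)^{1/2}.
\]
By \eqref{LH-ineq} this is at most $b_{H_0}^k\big(\int_{[0,t]^k}g(\pmb{t_k})^{1/H_0}d\pmb{t_k}\big)^{2H_0}$, so it suffices to prove $\int_{[0,t]^k}g(\pmb{t_k})^{1/H_0}d\pmb{t_k}<\infty$. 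For ordered $\pmb{t_k}$ one has the bound $|\phi_{\pmb{\xi_k}}(\pmb{t_k})|\le w_{+}(t,x)\prod_{j=1}^{k}\exp\{-\frac{t_{j+1}-t_j}{t_jt_{j+1}}|\sum_{i=1}^{j}t_i\xi_i|^2\}$, with $w_{+}(t,x)<\infty$ by \eqref{eqn1-20230522-11:45am}; expanding $\nu$ into $2^k$ products, bounding $1_{\{|\xi_j|\le1\}}|\xi_j|^{1-2b}\le|\xi_j|^{1-2b}$ and $1_{\{|\xi_j|>1\}}|\xi_j|^{1-2a}\le|\xi_j|^{1-2a}$, performing the change of variables $\eta_j=\sum_{i\le j}t_i\xi_i$, and applying Lemma \ref{lemma2-20220530-4:52}, one obtains $g(\pmb{t_k})^2\le C^k\sum\prod_{j=1}^{k}t_j^{\alpha_j}(t_{j+1}-t_j)^{\beta_j}$ over finitely many admissible exponent vectors. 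Raising to the power $1/(2H_0)$ and integrating over $[0,t]^k$ (which is $k!$ times the integral over $T_k(t)$), Lemma \ref{lem1-20230823-11:07am} gives a finite value, its hypothesis \eqref{condition-20230831-5:14pm} holding precisely because $a>\ell=\max(3/4-H_0,0)$, i.e.\ $H_0+a>3/4$. All the constants above are uniform in $H\in[a,b]$, so DCT applies and $Q_n\to0$.

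\emph{Expected main obstacle.} The delicate step is the last one: carrying out the spatial Gaussian integrals against the rough weights $|\xi_j|^{1-2a},|\xi_j|^{1-2b}$ while correctly tracking the coupling introduced by the linear substitution $\eta_j=\sum_{i\le j}t_i\xi_i$, and then verifying that the exponents $\alpha_j,\beta_j$ produced in the time-simplex integral satisfy condition \eqref{condition-20230831-5:14pm} of Lemma \ref{lem1-20230823-11:07am}. This is exactly the place where the restriction $H\in(\ell,1/2)$ (equivalently $H_0+H>3/4$) enters, and it must be checked so that the resulting bound is uniform for $H$ in the compact interval $[a,b]$; the remaining ingredients (pointwise convergence, domination, Parseval, the Hardy--Littlewood step) are routine, just as in Lemma \ref{Ik-conv} and Lemma 4.2 of \cite{BL2023}.
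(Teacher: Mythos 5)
Your argument is correct, and it relies on the same core ingredients as the paper's proof: the representation \eqref{In-H} of the multiple integral, bracketing $H_n$ in a compact interval $[a,b]\subset(\ell,1/2)$ and dominating the rough weights by the two-sided bound $\nu(\pmb{\xi_k})$, the change of variables $\eta_j=\sum_{i\le j}t_i\xi_i$ combined with \eqref{prod-ineq} and Lemma \ref{lemma2-20220530-4:52}, the Littlewood--Hardy inequality \eqref{LH-ineq}, and the simplex integrability condition $-\frac{3-4a}{4H_0}>-1$, i.e. $a>3/4-H_0$, which is exactly where the paper's argument also uses $H^*>\ell$. The only structural difference is the order of operations. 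The paper applies \eqref{LH-ineq} first, reducing $Q_n$ to $k!\,b_{H_0}^k\big(\int_{T_k(t)}A_k^{(n)}(\pmb{t_k},\pmb{t_k})^{1/(2H_0)}d\pmb{t_k}\big)^{2H_0}$, and then runs two nested applications of dominated convergence: one in $\pmb{t_k}$ with the dominating function $h_k(\pmb{t_k})$ built from $\prod_j(t_{j+1}-t_j)^{-(3-4a)/2}$, and one in $\pmb{\eta_k}$ to show $A_k^{(n)}(\pmb{t_k},\pmb{t_k})\to0$ pointwise. You instead integrate out the time variables first via Parseval, producing the non-negative weight $\Phi(\pmb{\xi_k})$, and apply dominated convergence a single time in $\pmb{\xi_k}$; the price is the extra Cauchy--Schwarz step in $L^2(\nu)$ needed to decouple $\phi_{\pmb{\xi_k}}(\pmb{t_k})$ from $\phi_{\pmb{\xi_k}}(\pmb{s_k})$ before \eqref{LH-ineq} can be applied to $g$. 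Both routes terminate in the identical spatial Gaussian computation and the identical simplex estimate, and you have correctly located the step at which the restriction $H_0+a>3/4$ enters; the remaining bookkeeping you defer (tracking the admissible exponent vectors through Lemma \ref{lem1-20230823-11:07am}) is carried out in the paper in essentially the form you describe.
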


\begin{proof}
Proceeding as in the proof of Lemma 4.2 of \cite{BL2023}, by \eqref{LH-ineq}, we infer that

\begin{align*}
Q_n \leq k! \, b_{H_0}^k \left(\int_{T_k(t)} A_k^{(n)}(\pmb{t_k},\pmb{t_k})^{\frac{1}{2H_0}}d\pmb{t_k}\right)^{2H_0}.
\end{align*}
where
\begin{align*}
A_k^{(n)}(\pmb{t_k},\pmb{s_k})=\int_{\bR^k}
 \phi_{\pmb{\xi_k}}(\pmb{t_k})  \overline{\phi_{\pmb{\xi_k}}(\pmb{s_k})}  
 \left|  c^k_{H_n}\prod_{j=1}^k|\xi_j|^{\frac{1}{2} - H_n} - c^k_{H^{\ast}}\prod_{j=1}^k|\xi_j|^{\frac{1}{2} - H^{\ast}}\right|^2 d\pmb{\xi_k}
\end{align*}
and $\phi_{\pmb{\xi_k}}(\pmb{t_k})$ is defined by \eqref{def-phi-k} with $d = 1$. 

Therefore, it suffices to show that:
\begin{equation}
\label{conv-A-H0}
\int_{T_k(t)} A_k^{(n)}(\pmb{t_k},\pmb{t_k})^{\frac{1}{2H_0}}d\pmb{t_k}\to 0, \quad \mbox{as $n\to \infty$}.
\end{equation}
For this, we apply the Dominated Convergence Theorem. We will show that
\begin{equation}
\label{A-conv}
A_k^{(n)}(\pmb{t_k},\pmb{t_k})\to 0 \quad \mbox{as $n\to \infty$},
\end{equation}
and there exists a function $h_{k}(\pmb{t_k})$ such that
\begin{equation}
\label{domin}
A_k^{(n)}(\pmb{t_k},\pmb{t_k}) \leq h_{k}(\pmb{t_k}) \quad \mbox{and} \int_{T_k(t)}h_{k}(\pmb{t_k}) d\pmb{t_k}<\infty.
\end{equation}

We first prove \eqref{A-conv}. By the change of variables $z_i = t_i \xi_i$ for $i = 1, \ldots, j$, followed by $\eta_j = \sum_{i = 1}^{j} z_i$ for $j = 1, \ldots, k$ (with $\eta_0 = 0$), we have
\begin{align*}
&A_k^{(n)}(\pmb{t_k},\pmb{t_k})
\le  w_{+}^2(t,x) \int_{\bR^k} \prod_{j = 1}^{k} \exp \Big( - \frac{t_{j+1} - t_{j}}{t_{j} t_{j+1}} \Big| \sum_{i = 1}^{j} t_{i} \xi_{i} \Big|^2 \Big)\\
&\qquad \qquad \qquad  \qquad \qquad \qquad 
\bigg| c^{k/2}_{H_n} \prod_{j=1}^{k} \big| \xi_j \big|^{\frac{1}{2} - H_n} - c^{k/2}_{H^{\ast}} \prod_{j=1}^{k} \big| \xi_j \big|^{\frac{1}{2} - H^{\ast}} \bigg|^2 d\boldsymbol{\xi_k}\\
&=  w_{+}^2(t,x) \Big( \prod_{j = 1}^{k} t_j^{-1} \Big) \int_{\bR^k} \prod_{j = 1}^{k} \exp \Big( - \frac{t_{j+1} - t_{j}}{t_{j} t_{j+1}} \Big| \sum_{i = 1}^{j} z_i \Big|^2 \Big)\\
&\qquad \qquad \qquad  
\bigg| c^{k/2}_{H_n} \prod_{j = 1}^{k} t_j^{H_n - \frac{1}{2}} \prod_{j=1}^{k} \big| z_j \big|^{\frac{1}{2} - H_n} - c^{k/2}_{H^{\ast}} \prod_{j = 1}^{k}t_j^{H^{\ast} - \frac{1}{2}} \prod_{j=1}^{k} \big| z_j \big|^{\frac{1}{2} - H^{\ast}} \bigg|^2 d\boldsymbol{z_k}\\
&=  w_{+}^2(t,x) \int_{\bR^k} \prod_{j = 1}^{k} \exp \Big( - \frac{t_{j+1} - t_{j}}{t_{j} t_{j+1}} \Big| \eta_j \Big|^2 \Big)\\
&\qquad  
\bigg| c^{k/2}_{H_n} \prod_{j = 1}^{k}t_j^{H_n - 1} \prod_{j=1}^{k} \big| \eta_j - \eta_{j-1} \big|^{\frac{1}{2} - H_n} - c^{k/2}_{H^{\ast}} \prod_{j = 1}^{k}t_j^{H^{\ast} - 1} \prod_{j=1}^{k} \big| \eta_j - \eta_{j-1} \big|^{\frac{1}{2} - H^{\ast}} \bigg|^2 d \boldsymbol{\eta_k}.
\stepcounter{equation}\tag{\theequation}\label{relation2-20230903-2:51pm} 
\end{align*}
We denote by $B^{(n)}(\pmb{t_k},\pmb{\eta_k})$ the integrand in the last integral above. Clearly, $B^{(n)}(\pmb{t_k},\pmb{\eta_k}) \to 0$ as $n\to \infty$. So relation \eqref{A-conv} will follow by the Dominated Convergence Theorem, as long as we show that there exists a function $B(\pmb{t_k},\pmb{\eta_k})$ such that
\[
B^{(n)}(\pmb{t_k},\pmb{\eta_k})\leq B(\pmb{t_k},\pmb{\eta_k}) \quad \mbox{and} \quad \int_{\bR^k}B(\pmb{t_k},\pmb{\eta_k})d
\pmb{\eta_k}<\infty.
\]
Note that $B^{(n)}(\pmb{t_k},\pmb{\eta_k})\leq 2\big(B_1^{(n)}(\pmb{t_k},\pmb{\eta_k})+B_2(\pmb{t_k},
\pmb{\eta_k})\big)$, where
\[
B_1^{(n)}(\pmb{t_k},\pmb{\eta_k}) := \prod_{j = 1}^{k} \exp \Big( - \frac{t_{j+1} - t_{j}}{t_{j} t_{j+1}} \big| \eta_j \big|^2 \Big) c^{k}_{H_n} \prod_{j = 1}^{k} t_j^{2H_n - 2} \prod_{j=1}^{k} \big| \eta_j - \eta_{j-1} \big|^{1 - 2H_n}
\]
and $B_2$ has a similar expression with $H_n$ replaced by $H^*$. It suffices to consider $B_1^{(n)}$.

Fix numbers $a$ and $b$ such that
\begin{equation}
\label{def-ab}
\ell<a<H^*<b<\frac{1}{2}.
\end{equation}
Then $H_n \in [a,b]$ for $n$ large enough.
Since the constant $c_H$ defines a continuous function of $H$, $c_{H_n}$ is bounded by a constant $c$.
We use the inequality
\begin{equation}
\label{prod-ineq}
\prod_{j=1}^{k}|\eta_j-\eta_{j-1}|^{1-2H} \leq \sum_{\pmb{a} \in A_k} \prod_{j=1}^{k}|\eta_j|^{(1-2H)a_j},
\end{equation}
where $A_k$ is a set of multi-indices $\pmb{a}=(a_1,\ldots,a_k)$ such that
$a_1 \in \{1,2\}$, $a_n \in \{0,1\}$, $a_j \in \{0,1,2\}$ for $j=1,\ldots,k-1$, $\sum_{j=1}^{k}a_j=k$, and ${\rm card}(A_k)=2^{k-1}$. Note that $|\eta_j|^{(1-2H)a_j} \leq f_{a}(|\eta_j|)$ for any $j=1,\ldots,n$, where $a$ is the constant from $\{0, 1, 2 \}$ and the functions $f_0,f_1,f_2$ are defined as follows: $f_0(r)=1$ for any $r> 0$,
\[
f_1(r)=
\left\{
\begin{array}{ll}
r^{1-2a}  & \mbox{if $r\geq 1$} \\
1 & \mbox{if $r\in (0,1)$}
\end{array} \right.
\quad
\mbox{and}
\quad
f_2(r)=
\left\{
\begin{array}{ll}
r^{2(1-2a)}  & \mbox{if $r\geq 1$} \\
1 & \mbox{if $r\in (0,1)$}.
\end{array} \right.
\]
Moreover, for any $H_n \in [a,b]$,
\begin{align*}
\prod_{j=1}^{k} t^{2H_n - 2}_j &= \Big( \prod_{j=1}^{k} t^{-1}_j \Big)^{2-2H_n} \le \prod_{j=1}^{k} \big( t^{-1}_j  \vee 1 \big)^{2-2a} \le \prod_{j=1}^{k} \big( t_j  \wedge 1 \big)^{2a-2} \le \prod_{j=1}^{k} \big( t_j  \vee 1 \big)^{2a-2}.
\end{align*}
It follows that:
\begin{align*}
B_1^{(n)}(\pmb{t_k},\pmb{\eta_k}) &\le c^k \Big(  \prod_{j = 1}^{k} (t_j \vee 1)^{2a-2}  \Big) \sum_{\bold{a} \in A_k} \prod_{j = 1}^{k} \exp \Big( - \frac{t_{j+1} - t_{j}}{t_{j} t_{j+1}} \Big| \eta_j \Big|^2 \Big)  \prod_{j = 1}^{k}  f_{a_j}(|\eta_j|) =:F(\pmb{t_k},\pmb{\eta_k}).
\end{align*}
By the same argument, $B_2^{(n)}(\pmb{t_k},\pmb{\eta_k})\leq F(\pmb{t_k},\pmb{\eta_k})$. Hence, $B^{(n)}(\pmb{t_k},\pmb{\eta_k})\leq 2F(\pmb{t_k},\pmb{\eta_k})$ and so,
\begin{equation}
\label{bound-A}
A_{k}^{(n)}(\pmb{t_k},\pmb{t_k})=\int_{\bR^k} B^{(n)}(\pmb{t_k},\pmb{\eta_k})d\pmb{\eta_k} \leq
4 \int_{\bR^k} F(\pmb{t_k},\pmb{\eta_k})d\pmb{\eta_k}.
\end{equation}
It remains to prove that $\int_{\bR^k} F(\pmb{t_k},\pmb{\eta_k})d\pmb{\eta_k}<\infty$. We give below an estimate for this integral which will be used for the proof of relation \eqref{domin}. Note that
\[
\int_{\bR^k} F(\pmb{t_k},\pmb{\eta_k})d\pmb{\eta_k}=c^k \prod_{j = 1}^{k} (t_j \vee 1)^{2a-2} 
\sum_{\bold{a} \in A_k} \prod_{j = 1}^{k} I(a_j),  \ \mbox{with} \
I(a_j)=\int_{\bR}e^{-\frac{t_{j+1} - t_{j}}{t_{j} t_{j+1}} |\eta_j|^2}f_{a_j}(|\eta_j|)d\eta_j.
\]
We use the same arguments for the study of the upper bound of $I(a_j)$ in the proof of Lemma 4.2 in \cite{BL2023} with $t_{j+1} - t_{j}$ replaced by $\frac{t_{j+1} - t_{j}}{t_{j} t_{j+1}}$. Moreover, we have
\[
\begin{cases}
(t_j \vee 1)^{2a - 2} (t_j t_{j+1})^{\frac{1}{2}} \le t^{\frac{1}{2}}_{j+1} \frac{1}{(t_j \vee 1)^{2 - 2a}} (t_j \vee 1)^{\frac{1}{2}} \le t^{\frac{1}{2}}_{j+1}, &\text{ since } \frac{1}{2} < 2 - 2a,\\[0.8em]
(t_j \vee 1)^{2a - 2} (t_j t_{j+1})^{1-a} \le t^{1-a}_{j+1} \frac{1}{(t_j \vee 1)^{1 - a}} (t_j \vee 1)^{1-a} \le t^{1-a}_{j+1}, &\text{ since } 1-a < 2 - 2a,\\[0.8em]
(t_j \vee 1)^{2a - 2} (t_j t_{j+1})^{\frac{3-4a}{2}} \le t^{\frac{3-4a}{2}}_{j+1} \frac{1}{(t_j \vee 1)^{\frac{3-4a}{2}}} (t_j \vee 1)^{\frac{3-4a}{2}} \le t^{\frac{3-4a}{2}}_{j+1}, &\text{ since } \frac{3-4a}{2} < 2 - 2a.
\end{cases}
\]
Since $\frac{1}{2} < 1 - a < \frac{3 - 4a}{2}$, $t^{\frac{1}{2}}_{j+1}$, $t^{1-a}_{j+1}$ and $t^{\frac{3-4a}{2}}_{j+1}$ are all dominated by $ (T \vee 1)^{\frac{3-4a}{2}}$. Hence,
\begin{align*}
(t_j \vee 1)^{2a - 2} I(a_j) &\le c_a t^{\frac{3-4a}{2}}_{j+1} \Big\{ (t_{j+1} - t_j)^{-\frac{1}{2}} + (t_{j+1} - t_j)^{-(1-a)} + (t_{j+1} - t_j)^{-\frac{3 -4a}{2}}   \Big\}\\
&\le c_a c_{t,a} (T \vee 1)^{\frac{3-4a}{2}} \, (t_{j+1} - t_j)^{-\frac{3 -4a}{2}}.
\end{align*}
Since $\text{card}(A_k) = 2^{k-1}$, we obtain:
\begin{equation}
\label{relation3-20230903-8:30pm}
\int_{\bR^k}  F(\pmb{t_k},\pmb{\eta_k}) \mathrm{d} \boldsymbol{\eta}  \le 2^{k-1} c^k c^k_a c^k_{t,a} (T \vee 1)^{k \cdot \frac{3-4a}{2}} \prod_{j=1}^{k}  (t_{j+1} - t_j)^{-\frac{3 - 4a}{2}}.
\end{equation}
This proves the integrability of $F(\pmb{t_k},\pmb{\eta_k})$ and so, relation \eqref{A-conv} follows by the Dominated Convergence Theorem. 

\medskip

Now we prove \eqref{domin}. By relations \eqref{bound-A} and \eqref{relation3-20230903-8:30pm},  we have
\begin{align*}
\Big( A_k^{(n)}(\pmb{t_k},\pmb{t_k}) \Big)^{\frac{1}{2H_0}} 
&\le \Big(  2^{k+1} c^k c_a^k c^k_{t,a} (T \vee 1)^{k \cdot \frac{3-4a}{2}} \prod_{j=1}^{k} (t_{j+1} - t_j)^{-\frac{3 - 4a}{2}} \Big)^{\frac{1}{2H_0}} =: h_k(\bold{t}).
\end{align*}
Note that $h_k(\bold{t})$ is integrable over the symplex $T_k(t)$ since $-\frac{3 - 4a}{4H_0} > -1$ which is equivalent to $a > \frac{3}{4} - H_0$. This finishes the proof of \eqref{domin} and the justification of the application of the Dominated Convergence Theorem.

\end{proof}

We now prove some moment estimates for the increments of the solution $u^H$, uniform in $H$, which will be used below for the proof of tightness of the family $(u^H)_{H \in [a,b]}$. In particular, Theorem \ref{rough-unif-mom} shows that the process $u^H$ has a continuous modification. We will work with this modification.

\begin{theorem}
\label{rough-unif-mom}
Let $\ell= \max(3/4-H_0,0)$. Let $[a,b]$ be a compact set in $[0,1]$ such that:
\begin{equation}
\label{range-ab}
\ell<a<b<\frac{1}{2}.
\end{equation}
Let $u^H$ be the solution of the heat equation \eqref{PAM} with noise $W^H$. For any $p\geq 2$, $T>0$, $c_0 \in (0,\frac{2H_0+a-1}{2H_0})$ and
\begin{equation}
\label{cond-delta}
0<\theta<2H_0(1-c_0)+a-1,
\end{equation}
there exist a constant $C>0$ such that for any $t,t'\in [t_0,T]$ and $x,x'\in K$,
\[
\sup_{H \in [a,b]}\bE|u(t,x)-u(t',x')|^p \leq C \Big(|t-t'|^{p\theta/2}+|x-x'|^{p\theta}\Big).
\]
\end{theorem}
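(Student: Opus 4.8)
The plan is to merge the scheme of Theorem~\ref{Th-uniform-C} with the Littlewood--Hardy technique of Lemma~\ref{rough-conv-Ik}. I would first reduce the claim to
\[
\sup_{H\in[a,b]}\big\|u^H(t,x)-u^H(t',x')\big\|_p\le C\big(|t-t'|^{\theta/2}+|x-x'|^{\theta}\big),
\]
since raising to the power $p$ and using $(\rho+\sigma)^p\le 2^{p-1}(\rho^p+\sigma^p)$ then yields the theorem. By hypercontractivity on a fixed Wiener chaos and orthogonality of the chaos expansion,
\[
\big\|u^H(t',x')-u^H(t,x)\big\|_p\le\sum_{n\ge1}(p-1)^{n/2}\big\|I_n^H(f_{t',x',n}-f_{t,x,n})\big\|_2,
\]
with $\|I_n^H(f_{t',x',n}-f_{t,x,n})\|_2^2=n!\,\|\widetilde{f_{t',x',n}-f_{t,x,n}}\|_{\cH_H^{\otimes n}}^2$. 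As in Theorem~\ref{Th-uniform-C}, I would split the time increments ($x'=x$, $t'=t+h$) into the contribution $A_n^H(t,h)$ of $[0,t]^n$ and the boundary contribution $B_n^H(t,h)$ of $[0,t+h]^n\setminus[0,t]^n$, and treat the space increments ($t'=t$, $x'=x+z$) through $C_n^H(t,z)$; it then suffices to bound $\sum_n(p-1)^{n/2}(A_n^H/n!)^{1/2}$ and its analogues by $C|h|^{\theta/2}$, resp. $C|z|^{\theta/2}$, uniformly in $H\in[a,b]$.

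\textbf{Reduction of the temporal structure and spatial estimates.} For each of $A_n^H$, $B_n^H$, $C_n^H$ the $\cH_H^{\otimes n}$-norm carries the temporal kernel $\alpha_{H_0}|t-s|^{2H_0-2}$ and the spatial spectral density $c_H|\xi|^{1-2H}$. Applying Cauchy--Schwarz in the space variables and then the Littlewood--Hardy inequality~\eqref{LH-ineq} with exponent $1/H_0$, exactly as in Lemma~\ref{rough-conv-Ik}, I would bound each of these quantities by $n!\,b_{H_0}^{\,n}$ times the $2H_0$-th power of an integral over the relevant simplex of $\big(\psi_n^H(\pmb{t_n})\big)^{1/(2H_0)}$, where $\psi_n^H(\pmb{t_n})$ is the purely spatial integral of $\big|\cF(f_{t',x',n}-f_{t,x,n})(\pmb{t_n},\cdot)(\pmb{\xi_n})\big|^2$ against $c_H^n\prod_{j}|\xi_j|^{1-2H}\,d\pmb{\xi_n}$. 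The pointwise bounds on $\big|\cF(f_{t+h,x,n}-f_{t,x,n})(\pmb{t_n},\cdot)(\pmb{\xi_n})\big|$ (into $K_1+K_2$, with \eqref{rel1-20230908}--\eqref{rel2-20230908}) and on $\big|\cF(f_{t,x+z,n}-f_{t,x,n})(\pmb{t_n},\cdot)(\pmb{\xi_n})\big|^2$ (into $|z|^{2\theta}C(F_1+F_2)$, with \eqref{rel2-20230909-9pm}) from the proof of Theorem~\ref{Th-uniform-C} are valid in dimension $d=1$ and do not involve the noise, so I would reuse them. Each $\psi_n^H(\pmb{t_n})$ is then dominated by a Gaussian chain $\prod_{k=1}^{n}\exp\!\big(-\tfrac12\tfrac{t_{k+1}-t_k}{t_k t_{k+1}}\big|\sum_{j\le k}t_j\xi_j\big|^2\big)$ times a polynomial factor $\big(1+\big|\sum_j t_j\xi_j\big|^{2\theta}\big)$ and, for the $K_1,K_2,F_1$ parts, a factor $(t-t_n)^{-\theta}$. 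To integrate such a chain against $\prod_j|\xi_j|^{1-2H}$ I would use the change of variables $z_i=t_i\xi_i$, $\eta_j=\sum_{i\le j}z_i$ (as in \eqref{relation2-20230903-2:51pm}), the product inequality~\eqref{prod-ineq} to replace $\prod_j|\eta_j-\eta_{j-1}|^{1-2H}$ by $\sum_{\pmb a}\prod_j|\eta_j|^{(1-2H)a_j}$, the inequality $e^{-Ax^2}x^{2s}\le C_s A^{-s}e^{-Ax^2/2}$ on the polynomial factor attached to $\eta_n=\sum_j t_j\xi_j$, and finally Lemma~\ref{lemma2-20220530-4:52} for each one-dimensional Gaussian integral. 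Absorbing the resulting factors $(t_j\vee1)^{2a-2}$ and $(t_jt_{j+1})^{\bullet}$ into powers of $T\vee1$ exactly as in \eqref{relation3-20230903-8:30pm}, this should give $\psi_n^H(\pmb{t_n})\le (|h|^{\theta}\text{ or }|z|^{2\theta})\,C^n\prod_{j=1}^{n-1}(t_{j+1}-t_j)^{-\frac{3-4a}{2}}\,(t-t_n)^{-q}$ with $q$ depending on $\theta$, $a$, $H_0$ and on the splitting parameter $c_0$.

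\textbf{Summation and uniformity.} Raising to the power $1/(2H_0)$ and integrating over the simplex by Lemma~\ref{lem1-20230823-11:07am}, finiteness requires every exponent to exceed $-1$ after division by $2H_0$: for the interior factors this is $\tfrac{3-4a}{4H_0}<1$, i.e. $a>\tfrac34-H_0$ (hence $a>\ell$), and for the terminal factor it is exactly the pair of constraints $c_0<\tfrac{2H_0+a-1}{2H_0}$ and $0<\theta<2H_0(1-c_0)+a-1$ in the statement; one also has to check that condition~\eqref{condition-20230831-5:14pm} of Lemma~\ref{lem1-20230823-11:07am} holds for every multi-index $\pmb a$ produced by~\eqref{prod-ineq}. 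Lemma~\ref{lem1-20230823-11:07am} yields a factor $\Gamma(\cdot)^n/\Gamma(cn+\cdot)$ with $c>0$, whose super-exponential denominator makes $\sum_n(p-1)^{n/2}(A_n^H/n!)^{1/2}$, and those for $B_n^H$ and $C_n^H$, convergent. Replacing $c_H$ by $\max_{[a,b]}c_H$, $H$ by the endpoints $a,b$ in all exponents, $t$ by $T\vee1$, and using the monotonicity of $\Gamma$ as in \eqref{rel4-20230909-9:13pm} would render every bound uniform over $H\in[a,b]$, and collecting the powers gives the desired $C|h|^{\theta/2}$ and $C|z|^{\theta}$.

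\textbf{Main obstacle.} The delicate point is the exponent bookkeeping in the terminal variable $\eta_n=\sum_j t_j\xi_j$, where the polynomial weight $|\eta_n|^{2\theta}$ from the increment, the weight $|\eta_n-\eta_{n-1}|^{1-2H}$ from~\eqref{prod-ineq}, the Gaussian weight $e^{-\frac{t-t_n}{t_n t}|\eta_n|^2}$, and the extra exponent $1/(2H_0)$ introduced by Littlewood--Hardy all interact; one must choose $c_0$ so that the surviving singularity $(t-t_n)^{-q/(2H_0)}$ stays integrable over the simplex while the exponent of $|h|$ (resp. $|z|$) remains equal to $\theta$, and this is precisely what produces the constraints on $c_0$ and $\theta$. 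The boundary term $B_n^H(t,h)$ would be handled like the thin-simplex integral at the end of the proof of Theorem~\ref{Th-uniform-C}, with the Littlewood--Hardy reduction applied first.
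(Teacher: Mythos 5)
Your proposal follows essentially the same route as the paper's proof: the same $A_n^H/B_n^H/C_n^H$ decomposition, reuse of the Fourier-increment bounds from Theorem~\ref{Th-uniform-C} (valid for $d=1$), the Littlewood--Hardy reduction~\eqref{LH-ineq}, the change of variables combined with~\eqref{prod-ineq} and Lemma~\ref{lemma2-20220530-4:52}, and Lemma~\ref{lem1-20230823-11:07am} for the simplex integrals, with the constraints on $a$, $c_0$ and $\theta$ arising exactly where you locate them. The only slip is the intermediate target $C|z|^{\theta/2}$ for the spatial increments, which should be (and in your final collection correctly is) $C|z|^{\theta}$.
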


\begin{proof}
{\bf Step 1 (time increments).} Let $t \in [t_0,T]$ and $h>0$ be such that $t+h \in [t_0,T]$. As in the proof of Theorem \ref{Th-uniform-C},
\begin{align*}
\big\|u^{H}(t+h,x)-u^{H}(t,x)\big\|_p 
& \leq \sum_{n\geq 1}(p-1)^{n/2}
\left(\frac{2}{n!} \Big(A_n^{H}(t,h)+B_n^{H}(t,h)\Big) \right)^{1/2},
\end{align*}
where
$A_n^{H}(t,h)=(n!)^2\| \widetilde{f}_{t+h,x,n}1_{[0,t]^n}-\widetilde{f}_{t,x,n}
\|_{\cH_{H}^{\otimes n}}^2$ and $B_n^{H}(t,h)=(n!)^2\| \widetilde{f}_{t+h,x,n}1_{[0,t+h]^n - [0,t]^n}\|_{\cH_{H}^{\otimes n}}^2$.

{\em We examine $A_n^{H}(t,h)$}. Note that by inequality \eqref{LH-ineq}, 
\begin{align*}
A^{H}_n(t,h) &\le b^n_{H_0} \bigg( \int_{[0,t]^n} \big( \psi_{t,h,n}^{H}(\pmb{t_n}) \big)^{\frac{1}{2H_0}} \mathrm{d}{\bold t}  \bigg)^{2H_0},
\stepcounter{equation}\tag{\theequation}\label{relation11-20230831-10:18pm} 
\end{align*}
where the function $\psi_{t,h,n}^{H}(\pmb{t_n})$ is defined as follows: if $\pmb{t_{n}} \in [0,t]^n$ is fixed and $\rho$ is the permutation of $1,\ldots,n$ such that $t_{\rho(1)}<\ldots<t_{\rho(n)}$ with $t_{\rho(n+1)}=t$, and
\begin{align*}
\psi_{t,h,n}^{H}(\pmb{t_n}) & :=
 \int_{\bR^n} \Big|\cF \Big(f_{t+h,x,n}(t_{\rho(1)}, \cdot, \ldots, t_{\rho(n)}, \cdot) - f_{t,x,n}(t_{\rho(1)}, \cdot, \ldots, t_{\rho(n)}, \cdot)\Big)(\pmb{\xi_n}) \Big|^2 
\prod_{j=1}^{n}|\xi_j|^{1-2H}d\pmb{\xi_n}.
\end{align*}
Assume for simplicity that $t_1 < \ldots < t_n$. To estimate $\psi_{t,h,n}^{H}(\pmb{t_n})$, we use estimates \eqref{rel1-20231205-11:51pm}, \eqref{rel1-20230908} and \eqref{rel2-20230908}, for the increments of the Fourier transform appearing above. Hence, there exists a constant $C^{(1)}$ depending on $t_0$, $\theta$ and $T$ such that 
\begin{align*}
&\psi_{t,h,n}^{H}(\pmb{t_n}) \le h^{\theta} C^{(1)}
\big(t - t_{n}\big)^{-\theta} I^{(n)}_{t/2}(\pmb{t_n}/2)
\stepcounter{equation}\tag{\theequation}\label{rel2-20231205-5:05pm}
\end{align*}
where
\begin{align*}
I^{(n)}_{t}(\pmb{t_n}) := c_H^n \int_{\bR^n} \prod_{k = 1}^{n} \exp\bigg\{ - \frac{t_{k+1} - t_{k}}{t_{k+1}t_{k}} \Big| \sum_{i=1}^{k} t_{i} \xi_{i} \Big|^2 \bigg\} 
|\xi_k|^{1-2H} d\pmb{\xi_n}.
\stepcounter{equation}\tag{\theequation}\label{rel2-20231008-2:25pm} 
\end{align*}
and $t_{n+1} = t$. 

We study $I^{(n)}_{t}(\pmb{t_n})$. Using the change of variables $z_i = t_i \xi_i$ for $i = 1, \ldots, k$, followed by $\eta_k = \sum_{i = 1}^{k} z_i$ for $k = 1, \ldots, n$, we have:
\begin{align*}
&I^{(n)}_{t}(\pmb{t_n}) \\
&\le c_H^n \Big( \prod_{k = 1}^{n} t_k \Big)^{2H - 2} \int_{\bR^n} 
\prod_{k = 1}^{n} \exp\bigg\{ - \frac{t_{k+1} - t_{k}}{t_{k+1}t_{k}}  |\eta_k |^2 \bigg\} |\eta_1|^{1-2H}  \prod_{k = 2}^{n} \big( |\eta_{k}|^{1-2H} + |\eta_{k-1}|^{1-2H} \big) \mathrm{d}\boldsymbol{\eta_n}\\
&\le c_H^n \Big( \prod_{k = 1}^{n} t_k \Big)^{2H - 2} 
\sum_{\pmb{\alpha_n} \in D_n^{(H)}} \prod_{k=1}^{n} \bigg\{ \int_{\bR}  \exp \Big(  - \frac{t_{k+1} - t_{k}}{t_{k+1}t_{k}} |\eta_k|^2\Big) |\eta_k|^{\alpha_k} \mathrm{d}\eta_k \bigg\},
\end{align*}
where for the last inequality, we used relation \eqref{prod-ineq} and the set $D_n^{(H)}$ is the set of multi-indices $\pmb{\alpha_n}=(\alpha_1,\ldots,\alpha_n)$ with $\alpha_j=(1-2H)a_j$ and $\pmb{a_n}=(a_1,\ldots,a_n)\in A_n$. Using Lemma \ref{lemma2-20220530-4:52} to compute the $d\eta_k$ integrals above with $k=1,\ldots,n$, we have:
\begin{align*}
I^{(n)}_{t}(\pmb{t_n}) &\le c_H^n  \Big( \prod_{k = 1}^{n} t_k \Big)^{2H - 2} 
\sum_{\pmb{\alpha_n} \in D_n^{(H)}}  \prod_{k = 1}^{n} \Gamma
\Big(\frac{1+ \alpha_k}{2} \Big) \Big( \frac{t_{k+1} - t_k}{t_k t_{k+1}} \Big)^{- \frac{1 + \alpha_k}{2}}\\
&\quad \le C_{H,1}^n \sum_{\pmb{\alpha_n} \in D_n^{(H)}} t^{\frac{1 + \alpha_n}{2}} t_1^{\frac{4H - 3 + \alpha_1}{2}} 
\Big( \prod_{k=2}^{n} t_k^{\frac{4H - 2 + \alpha_{k-1} + \alpha_k }{2}} \Big) 
\prod_{k=1}^{n} (t_{k+1} - t_k)^{- \frac{1 + \alpha_k}{2}}
\stepcounter{equation}\tag{\theequation}\label{relation1-20230831-3:59pm}
\end{align*}
where for the second inequality we used the constant $C_{H,1}$ defined by 
\begin{equation}
\label{constantC_H_1}
C_{H,1} = c_H \max \bigg\{ \Gamma \Big( \frac{1}{2} \Big), \Gamma \Big( 1 - H \Big), \Gamma \Big( \frac{3 - 4H}{2} \Big)  \bigg\}.
\end{equation} 
Both $c_H$ and $C_{H,1}$ can be uniformly bounded for all $H \in [a,b]$. Hence, we obtain:
\begin{align*}
&\big(t - t_{n}\big)^{-\theta} I^{(n)}_{t/2}(\pmb{t_n}/2) \\
&\le (C^{(1)})^n 2^n \sum_{\pmb{\alpha_n} \in D_n^{(H)}}  t^{\frac{1 + \alpha_n}{2}} t_{1}^{\frac{4H - 3 + \alpha_1}{2}} 
\Big( \prod_{k=2}^{n} t_{k}^{\frac{4H - 2 + \alpha_{k-1} + \alpha_k }{2}} \Big)
\Big( \prod_{k=1}^{n-1} \big( t_{k+1} - t_{k} \big)^{- \frac{1 + \alpha_k}{2}} \Big) (t - t_{n})^{- \frac{1 + \alpha_n}{2} - \theta}.
\stepcounter{equation}\tag{\theequation}\label{rel1-20231104-2:55pm}
\end{align*}
Taking power $\frac{1}{2H_0}$ on both sides of \eqref{rel2-20231205-5:05pm} above, we obtain:
\begin{align*}
&\Big( \psi^{H}_{t,z,n}(\pmb{t_n}) \Big)^{\frac{1}{2H_0}} \le 
\Big( h^{\theta}  C^{(1)} 2^n \Big)^{\frac{1}{2H_0}} \sum_{\pmb{\alpha_n} \in D_n^{(H)}} 
\prod_{k = 1}^{n} t^{\widetilde{\alpha}_k}_{k}  \prod_{k = 1}^{n-1} \big( t_{k+1} - t_{k}\big)^{\widetilde{\beta}_k}
\big( t - t_{n}\big)^{\widetilde{\beta}_{n, \theta}}
\stepcounter{equation}\tag{\theequation}\label{rel2-20231104-3:43pm}
\end{align*}
where
\begin{align*}
\widetilde{\alpha}_k =
  \begin{cases}
    \frac{4H - 3 + \alpha_1}{4H_0},     & \quad k = 1\\[0.8em]
    \frac{4H - 2+ \alpha_{k - 1} + \alpha_{k}}{4H_0},   & \quad  k = 2, \ldots, n
  \end{cases}
\stepcounter{equation}\tag{\theequation}\label{rel1-20231212}
\end{align*}
and 
\begin{align*}
\widetilde{\beta}_k = - \frac{1 + \alpha_k}{4H_0}, \; k = 1, \ldots, n-1
\stepcounter{equation}\tag{\theequation}\label{rel2-20231212}
\end{align*}
and
\begin{align*}
\widetilde{\beta}_{n, \theta} = - \frac{1 + \alpha_n + 2\theta}{4H_0}.
\stepcounter{equation}\tag{\theequation}\label{rel3-20231212}
\end{align*}
A similar estimate holds for arbitrary $(t_1, \ldots, t_n) \in [0, t]^n$ with $t_{\rho(1)} < \ldots < t_{\rho(n)}$ for same permutation $\rho \in S_n$, where $S_n$ is the set of all permutation of $\{1, \ldots, n \}$. From here, we derive that
\begin{align*}
&\int_{[0,t]^n}\left(\psi_{t,h,n}^{H}(\pmb{t_n})\right)^{\frac{1}{2H_0}} d\pmb{t_n} \\
& \qquad \leq 
\Big( h^{\theta}  C^{(1)} 2^n \Big)^{\frac{1}{2H_0}} n! 
\sum_{\pmb{\alpha_n} \in D_n^{(H)}}
\int_{T_n(t)}
 \prod_{k = 1}^{n} t^{\widetilde{\alpha}_k} _{k} \prod_{k = 1}^{n-1} \big( t_{k+1} - t_{k}\big)^{\widetilde{\beta}_k}
\big( t_{n+1} - t_{n}\big)^{\widetilde{\beta}_{n, \theta}} d\pmb{t_n}.
\stepcounter{equation}\tag{\theequation}\label{psi-b}
\end{align*}
To compute the integral
\[
{\cal I}(\pmb{\alpha_n}):= \int_{T_n(t)}
 \prod_{k = 1}^{n} t^{\widetilde{\alpha}_k} _{k} \prod_{k = 1}^{n-1} \big( t_{k+1} - t_{k}\big)^{\widetilde{\beta}_k}
\big( t_{n+1} - t_{n}\big)^{\widetilde{\beta}_{n, \theta}} d\pmb{t_n},
\]
we use Lemma \ref{lem1-20230823-11:07am}. To apply this Lemma, we first need $\widetilde{\alpha}_k > -1$ for all $k = 1, \ldots, n$ and $\widetilde{\beta}_k > -1$ for all $k = 1, \ldots, n-1$ which hold since $\alpha_k > 0$ for all $k = 1, \ldots, n$. We further need to check $\widetilde{\beta}_{n,\theta} > -1$ and condition \eqref{condition-20230831-5:14pm} holds for $k = 1, \ldots, n-1$. When $\alpha_n = 0$, we need $-\frac{1 + 2\theta}{4H_0} > -1$, i.e. $\theta < 2H_0 - 1/2$. When $\alpha_n = 1 - 2H$, we need $-\frac{2 + 2H + 2\theta}{4H_0} > -1$, i.e. $\theta < 2H_0 + H - 1$. Note that $2H_0 - 1/2 > 2H_0 + H - 1$ since $H < 1/2$. Hence, for all $H \in [a,b]$ arbitrary, we encounter the condition
\begin{equation}
\label{cond1-20231020-5:02pm}
0 < \theta < 2H_0 + a - 1.
\end{equation}
The verification of \eqref{condition-20230831-5:14pm} has been done in \cite{YP2020}. Under condition \eqref{cond1-20231020-5:02pm}, we can apply Lemma \ref{lem1-20230823-11:07am} to deduce that:
\begin{align*}
{\cal I}(\pmb{\alpha_n}) = \frac{\Gamma(\widetilde{\alpha}_1 + 1) 
\prod_{k = 1}^{n-1}\Gamma( \widetilde{\beta}_k + 1) \Gamma( \widetilde{\beta}_{n,\theta} + 1)}{\Gamma \big(|\widetilde{\alpha}| + |\widetilde{\beta}| + n + 1 \big)} \gamma_n 
t^{|\widetilde{\alpha}| + |\widetilde{\beta}| + n}
\stepcounter{equation}\tag{\theequation}\label{rel1-20231210-3:20pm}
\end{align*}
with $$\gamma_n = \gamma_n(\pmb{\alpha_n}) = \prod_{k = 1}^{n-1} 
\frac{\Gamma\big( \sum_{i = 1}^{k}(\widetilde{\alpha}_i + \widetilde{\beta}_i) + k + 1 + \widetilde{\alpha}_{k+1} \big)}{\Gamma\big( \sum_{i = 1}^{k}(\widetilde{\alpha}_i + \widetilde{\beta}_i) + k + 1 \big)}.$$
Note that $\gamma_n$ does not depend on $\widetilde{\beta}_{n, \theta}$, and 
$$\gamma_n \le 1.$$ 
This proof require significant effort. We refer the reader to Step 4 of the proof of Theorem 1.1 in \cite{BCY2022}. 

To bound these remaining factors in ${\cal I}(\pmb{\alpha_n})$, we use some monotonicity properties of the Gamma function: $\Gamma$ is decreasing on $(0,x_0)$ and increasing on $(x_0,\infty)$, where $x_0 \approx 1.4$. Therefore, for any $\alpha_j \in \{0,1-2H,2(1-2H)\}$ and $H \in [a,b]$,
we have $\Gamma(\widetilde{\alpha}_1 + 1) \le \Gamma \big( \frac{2H_0 + a - 1}{2H_0} \big)$ and $\prod_{k = 1}^{n-1}\Gamma(\widetilde{\beta}_k + 1) \le \big( \Gamma\big( 1 - \frac{3-4a}{4H_0} \big) \big)^{n-1}$. Next, we observe that $\widetilde{\beta}_{n,\theta} + 1 = 1 -\frac{1+\alpha_n + 2\theta}{4H_0}$ takes values in the interval $[1 - \frac{1- a + \theta}{2H_0},1-\frac{1}{4H_0}]$ whose lower bound may be close to 0. Since $\lim_{x\to 0}\Gamma(x)=\infty$, we control this term by fixing an arbitrary value $c_0\in (0,\frac{2H_0 + a -1}{2H_0})$, and then
choosing
\begin{equation}
\label{cond-theta-rough}
0< \theta < 2H_0(1-c_0) + a - 1 
\end{equation}
With this choice of $\theta$, \eqref{cond1-20231020-5:02pm} holds, and
more importantly $1 - \frac{1- a + \theta}{2H_0}>c_0$, so that for any $\alpha_n\in \{0,1-2H\}$ and $H \in [a,b]$,
\begin{equation}
\Gamma\Big(1 -\frac{1+\alpha_n + 2\theta}{4H_0} \Big) \le \Gamma(c_0).
\end{equation}
For any $H \in [a,b]$, the upper bound of $|\widetilde{\alpha}| + |\widetilde{\beta}| + n$ is
\begin{align*}
|\widetilde{\alpha}| + |\widetilde{\beta}| + n &\le  \frac{n(2H_0 + H - 1)}{2H_0} - \frac{1 + \alpha_n}{4H_0} \le  \frac{n(2H_0 + H - 1)}{2H_0} - \frac{1}{4H_0}\\
&\le  \frac{n(2H_0 + b - 1)}{2H_0} - \frac{1}{4H_0} \le \frac{n(2H_0 + b - 1)}{2H_0} 
\end{align*}
and the lower bound of $|\widetilde{\alpha}| + |\widetilde{\beta}| + n$ is
\begin{align*}
|\widetilde{\alpha}| + |\widetilde{\beta}| + n 
&\ge \frac{(n-1)(2H_0 + H - 1)}{2H_0} - \frac{1 + \alpha_n}{4H_0} \ge \frac{(n-1)(2H_0 + H - 1)}{2H_0} -  \frac{1-H}{2H_0}\\
&\ge \frac{(n-1)(2H_0 + a - 1)}{2H_0} -  \frac{1-a}{2H_0}.
\end{align*}
To bound the $\Gamma$-value appearing in the denominator, we pick an integer $m_0$ such that $\frac{(m_0-1)(2H_0 + a - 1)}{2H_0} -  \frac{1-a}{2H_0} > x_0$. Then, for any $n\geq m_0$ and for any $H \in [a,b]$,
\begin{align*}
\Gamma \Big(  |\widetilde{\alpha}| + |\widetilde{\beta}| + n + 1  \Big)
>  \Gamma\Big( \frac{(n-1)(2H_0 + a - 1)}{2H_0} -  \frac{1-a}{2H_0} + 1 \Big) > c^{n-1} \big[ (n-1)! \big]^{\frac{2H_0 + a -1 }{2H_0}}.
\end{align*}
Hence, combining all the estimates above, for any $\pmb{\alpha_n} \in D_{n}^{(H)}$ and for any $H \in [a,b]$,
\begin{align*}
{\cal I}(\pmb{\alpha_n}) \leq \frac{\big( C^{(2)} \big)^{n-1}}{[(n-1)!]^{\frac{2H_0+a-1}{2H_0}}}(t\vee 1)^{\frac{n(2H_0+b-1)}{2H_0}}.
\end{align*}
Returning to \eqref{psi-b}, we have:
\[
\int_{[0,t]^n}\Big(\psi_{t,h,n}^{H}(\pmb{t_n})\Big)^{\frac{1}{2H_0}}
d\pmb{t_n} \leq h^{\frac{\theta}{2H_0}} \frac{\big( C^{(3)} \big)^{n-1}}{[(n-1)!]^{\frac{a-1}{2H_0}}}.
\]

Finally, coming back to \eqref{relation11-20230831-10:18pm}, we obtain:
\[
A_n^H(t,h) \leq h^{\theta}  \frac{ \big( C^{(4)} \big)^{(n-1)2H_0}}{[(n-1)!]^{a-1}}.
\]
Consequently, for any $p\geq 2$, $t \in [t_0,T]$, $H \in [a,b]$ and $\theta$ as in \eqref{cond-theta-rough},
\begin{equation}
\label{bound-A-H}
\sum_{n\geq 1}(p-1)^{n/2}\left(\frac{1}{n!}A_n^H(t,h)\right)^{1/2} \leq C h^{\theta/2}.
\end{equation}
where $C$ depends on $p$, $a$, $b$, $\theta$, $c_0$, $H_0$, and $T$.


{\em We examine $B_n^{H}(t,h)$}. Let $D_{t,h}=[0,t+h]^n \verb2\2 [0,t]^n$. By inequality \eqref{LH-ineq},
\begin{equation}
\label{B-bd}
B_n^H(t,h) \leq b_{H_0}^n \left(\int_{[0,t+h]^n} \gamma_{t,h,n}^{H}(\pmb{t_n})^{\frac{1}{2H_0}}1_{D_{t,h}}
(\pmb{t_n})d\pmb{t_n},
\right)^{2H_0}
\end{equation}
where 
\[
\gamma_{t,h,n}^{H}(\pmb{t_n}) := (n!)^2 c_H^n \int_{\bR^n} 
\big|\cF \widetilde{f}_{t+h,x,n}(\pmb{t_n}, \cdot)(\pmb{\xi_n}) \big|^2  \prod_{j=1}^{n}|\xi_j|^{1-2H} d\pmb{\xi_n}.
\]
Note that for any $\bold{t} \in D_{t,h}$, there exists a permutation $\rho$ of $\{1, \ldots, n \}$ such that $ t_{\rho(1)}<\ldots<t_{\rho(n)}< t+h$ and $t < t_{\rho(n)} < t_{\rho(n+1)} = t+h$. For the estimate below, we assume for simplicity that $\rho$ is the identity, i.e. $0 < t_1 < \ldots < t_n$ and $t < t_n < t_{n+1} = t+h$. Then, by Lemma \ref{lem1-20230317-4:21pm} and using the fact that $w(t,x) \le C$ for all $t \in [t_0, T]$ and $x \in K$, we get:
\begin{align*}
&\big|\cF f_{t+h,x,n}(\pmb{t_n}, \cdot)(\pmb{\xi_n}) \big|^2 \le C \prod_{k = 1}^{n-1} \exp \Big( - \frac{t_{k+1} - t_{k}}{t_{k} t_{k+1}} \Big| \sum_{j = 1}^{k} t_{j} \xi_{j} \Big|^2  \Big) \exp \Big( - \frac{t+h - t_{n}}{t_{n}(t+h)} \Big| \sum_{j = 1}^{n} t_{j} \xi_{j} \Big|^2 \Big).
\end{align*}
A similar relation holds for arbitrary $\rho$, with $t_j$, $\xi_j$ replaced by $t_{\rho(j)}$, $\xi_{\rho(j)}$ respectively. Hence, using the same approach as in \eqref{rel2-20231008-2:25pm} and \eqref{relation1-20230831-3:59pm} and Lemma \ref{lemma2-20220530-4:52}, we have:
\begin{align*}
& \gamma_{t,h,n}^{H}(\pmb{t_n})
\le C \; c_H^n  \Big( \prod_{k=1}^{n} t_{k}\Big)^{2H-2}
\sum_{\pmb{\alpha_n} \in D_{n}^{(H)}}  \Big( \prod_{k = 1}^{n-1}\int_{\bR} 
\exp \Big( - \frac{t_{k+1} - t_{k}}{t_{k} t_{k+1}} \Big) |\eta_k|^{\alpha_k}  \mathrm{d}\eta_k \Big) \\
&\qquad \qquad \qquad \qquad \qquad  \qquad \qquad   \qquad \qquad \times \Big(\int_{\bR}  \exp \Big( - \frac{t+h - t_{n}}{t_{n}(t+h)} \Big) |\eta_n|^{\alpha_n}  \mathrm{d}\eta_n \Big) \\
& = C c_H^n  \Big( \prod_{k=1}^{n} t_{k} \Big)^{2H-2}
\sum_{\pmb{\alpha_n} \in D_{n}^{(H)}} 
\prod_{k = 1}^{n-1} \Gamma\Big( \frac{1+\alpha_k}{2}  \Big)
\Big( \frac{t_{k+1} - t_{k}}{t_{k+1}t_{k}} \Big)^{-\frac{1+\alpha_k}{2}} \Gamma\Big( \frac{1+\alpha_n}{2}  \Big) \Big( \frac{t+h - t_{n}}{t_{n}(t+h)} \Big)^{-\frac{1+\alpha_n}{2}}\\
&\le C C_{H,1} \sum_{\pmb{\alpha_n} \in D_{n}^{(H)}} t_{1}^{\frac{4H - 3 + \alpha_1}{2}} 
\Big( \prod_{k=2}^{n} t_{k}^{\frac{4H - 2 + \alpha_{k-1} + \alpha_k }{2}} \Big) (t+h)^{\frac{1 + \alpha_n}{2}} \prod_{k=1}^{n-1} 
\big( t_{k+1} - t_{k} \big)^{- \frac{1 + \alpha_k}{2}} \big( t+h - t_{n} \big)^{-\frac{1+\alpha_n}{2}}
\stepcounter{equation}\tag{\theequation}\label{relation1-20230901-2:58pm} 
\end{align*}
where $C_{H,1}$ is given by \eqref{constantC_H_1} and it can be uniformly bounded for all $H \in [a,b]$.

Hence, taking power $\frac{1}{2H_0}$ on both sides of relation \eqref{relation1-20230901-2:58pm}, we obtain:
\begin{align*}
\Big(  \gamma_{t,h,n}^{H}(\pmb{t_n}) \Big)^{\frac{1}{2H_0}} 
&\le 
\big( C^{(5)} \big)^{\frac{1}{2H_0}}  \sum_{\pmb{\alpha_n} \in D_{n}^{(H)}} \prod_{k = 1}^{n} t_{k}^{\widetilde{\alpha}_k} \big( t_{k+1} - t_{k}\big)^{\widetilde{\beta}_k} (t+h)^{\frac{1 + \alpha_n}{4H_0}},
\stepcounter{equation}\tag{\theequation}\label{relation2-20230901-4:00pm} 
\end{align*}
where $\widetilde{\alpha}_1 , \ldots, \widetilde{\alpha}_n$ are given by \eqref{rel1-20231212}; $\widetilde{\beta}_1, \ldots, \widetilde{\beta}_{n-1}$ are given by \eqref{rel2-20231212} and 
$$
\widetilde{\beta}_n = - \frac{1 + \alpha_n}{4H_0}
$$
Integrating over the set $D_{t,h}$ and using Lemma \ref{lem1-20230823-11:07am}, we obtain: 
\begin{align*}
&\int_{[0,t+h]^n} \Big(  \gamma_{t,h,n}^{H}(\pmb{t_n}) \Big)^{\frac{1}{2H_0}} 1_{D_{t,h}}({\bold t}) d \bold{t} \\
&\quad \le \big( C^{(5)} \big)^{\frac{1}{2H_0}}  n! 
\int_{t}^{t+h} \bigg( \int_{T_{n-1}(t_{n})} \sum_{\pmb{\alpha_n} \in D_{n}^{(H)}} (t+h)^{\frac{1 + \alpha_n}{4H_0}}  \prod_{k = 1}^{n-1} t^{\widetilde{\alpha}_k}_{k} (t_{k+1} - t_{k} )^{\widetilde{\beta}_k} (t+h - t_{n} )^{\widetilde{\beta}_n} d \pmb{t_{n-1}} \bigg) d t_{n}\\
&\quad \le \big( C^{(5)} \big)^{\frac{1}{2H_0}}  n!  \sum_{\pmb{\alpha_n} \in D_{n}^{(H)}} (t+h)^{\frac{1 + \alpha_n}{4H_0}}
\int_{t}^{t+h} J(t_n)  \cdot (t + h - t_{n})^{\widetilde{\beta}_n}\mathrm{d} t_{n}
\stepcounter{equation}\tag{\theequation}\label{rel2-20230902-4:20pm} 
\end{align*}
where
\begin{align*}
J(t_n) &:= \int_{T_{n-1}(t_{n})} 
\prod_{k=1}^{n-1} t_k^{\widetilde{\alpha}_k} (t_{k+1} - t_k)^{\widetilde{\beta}_k} d \pmb{t_{n-1}} \\
&= \frac{\Gamma(\widetilde{\alpha}_1 + 1) \prod_{k = 1}^{n-1}\Gamma(\widetilde{\beta}_k + 1)}{\Gamma \big(\sum_{k=1}^{n-1}\widetilde{\alpha}_k + \sum_{k=1}^{n-1}\widetilde{\beta}_k  + n  \big)} \gamma_{n-1} 
t^{\sum_{k=1}^{n-1}\widetilde{\alpha}_k + \sum_{k=1}^{n-1}\widetilde{\beta}_k + n - 1}
\stepcounter{equation}\tag{\theequation}\label{relation2-20230901-8:06pm} 
\end{align*}
with 
$$\gamma_{n-1} = \gamma_{n-1}(\pmb{\alpha_{n-1}}) = \prod_{k = 1}^{n-2} 
\frac{\Gamma\big( \sum_{i = 1}^{k}(\widetilde{\alpha}_i + \widetilde{\beta}_i) + k + 1 + \widetilde{\alpha}_{k+1} \big)}{\Gamma\big( \sum_{i = 1}^{k}(\widetilde{\alpha}_i + \widetilde{\beta}_i) + k + 1 \big)}.
$$
Note that $\gamma_{n-1} \le 1$, by using the same arguments in Step 4 of the proof of Theorem 1.1 in \cite{BCY2022}. To bound these factors in \eqref{relation2-20230901-8:06pm}, we use the same approach as in the study of ${\cal I}(\pmb{\alpha_n})$ above. It follows that:
\begin{align*}
J(t_n) \le \frac{ \big( C^{(6)} \big)^{n-1}}{\big[ (n-1)! \big]^{\frac{2H_0 + a - 1}{2H_0}}} \; (t \vee 1)^{\frac{(n-1)(2H_0 + b - 1)}{2H_0}}.
\end{align*}
Using the change of variable $s = t+h-t_n$, we conclude that
\begin{align*}
\int_{[0,t+h]^n}\gamma_{t,h,n}^{(H)}(\pmb{t_n})^{\frac{1}{2H_0}}
1_{D_{t,h}}(\pmb{t_n}) d\pmb{t_n} \leq  \frac{ \big( C^{(7)} \big)^{n-1}}{\big[ (n-1)! \big]^{\frac{2H_0 + a - 1}{2H_0}}} \sum_{\pmb{\alpha_n}\in D_n^{(H)}}
\frac{1}{1-\frac{1+\alpha_n}{4H_0}}h^{1-\frac{1+\alpha_n}{4H_0}}.
\end{align*}
Note that $(1-\frac{1+\alpha_n}{4H_0})^{-1}$ is bounded by $\frac{2H_0}{2H_0+a-1}$ for any $\alpha_n \in \{0,1-2H\}$ and $H \in [a,b]$. Moreover, due to condition \eqref{cond1-20231020-5:02pm},
\[
\frac{\theta}{2H_0}<\frac{2H_0+a-1}{2H_0}\leq 1-\frac{1+\alpha_n}{4H_0}\leq 1-\frac{1}{4H_0}.
\]
Therefore,
\[
\int_{[0,t+h]^n}\gamma_{t,h,n}^{(H)}(\pmb{t_n})^{\frac{1}{2H_0}}
1_{D_{t,h}}(\pmb{t_n}) d\pmb{t_n} \leq h^{\frac{\theta}{2H_0}} \frac{ \big( C^{(8)} \big)^{n-1} }{\big[ (n-1)! \big]^{\frac{2H_0 + a - 1}{2H_0}}} .
\]
Returning to \eqref{B-bd}, we infer that
$B_n(t,h) \leq  h^{\theta} \frac{C^{n-1}}{[(n-1)!]^{a-1}}$,
and hence, for any $p\geq 2$, $t \in [t_0,T]$ and $\theta$ satisfying \eqref{cond1-20231020-5:02pm} (and in particular, for any $\theta$ chosen as in \eqref{cond-theta-rough}),
\[
\sum_{n\geq 1}(p-1)^{n/2}\left(\frac{1}{n!}B_n(t,h)  \right)^{1/2} \leq C h^{\theta/2}.
\]
Here $C$ is a constant depending on $p$, $a$, $b$, $c_0$, $H_0$, and $T$.


{\bf Step 2 (space increments).} We denote $z = x' - x$. As in the proof of Theorem \ref{Th-uniform-C},
\[
\|u^{H}(t,x+z)-u^{H}(t,x)\|_p \leq \sum_{n\geq 1}(p-1)^{n/2} \|I_{n}^{H}(f_{t,x+z,n}-f_{t,x,n})\|_p
\leq \sum_{n\geq 1}(p-1)^{n/2} \left( \frac{1}{n!} C_{n}^{H}(t,z)\right)^{1/2},
\]
where $C_{n}^{H}(t,z)=(n!)^2 \| \widetilde{f}_{t,x+z,n}-\widetilde{f}_{t,x,n}
\|_{\cH_{H}^{\otimes n}}^{2}$. By \eqref{LH-ineq}, we have:
\begin{align*}
C^{H}_n(t,z) 
&\le b^n_{H_0} \bigg( \int_{[0,t]^n} \Big( \Psi^{H}_{t,z,n}(\pmb{t_n}) \Big)^{\frac{1}{2H_0}} d\pmb{t_n}  \bigg)^{2H_0},
\stepcounter{equation}\tag{\theequation}\label{relation7-20230902-10:53pm} 
\end{align*}
where we fixed $\pmb{t_{n}} \in [0,t]^n$ and $\rho$ is the permutation of $1,\ldots,n$ such that $t_{\rho(1)}<\ldots<t_{\rho(n)}$ with $t_{\rho(n+1)}=t$ and
\[
\Psi^{H}_{t,z,n}(\pmb{t_n}) := 
\int_{\bR^n} \Big|\cF \Big(f_{t,x+z,n}(t_{\rho(1)}, \cdot, \ldots, t_{\rho(1)}, \cdot) - f_{t,x,n}(t_{\rho(1)}, \cdot, \ldots, t_{\rho(1)}, \cdot) \Big)(\pmb{\xi_n}) \Big|^2 
\prod_{j=1}^{n}|\xi_j|^{1 - 2H}d\pmb{\xi_n}.
\]
Assume for simplicity that $t_1 < \ldots < t_n$. To study $\Psi^{H}_{t,z,n}(\pmb{t_n})$, we can use the same estimates \eqref{rel1-20230909-9pm} and \eqref{rel2-20230909-9pm}, for the increments of the Fourier transform appearing above. Using the definition of $I^{(n)}_{t}(\pmb{t_n})$ in \eqref{rel2-20231008-2:25pm}, there exist a constant $C$ denepdning on $t_0$, $\theta$ and $T$ such that
\begin{align*}
\Psi^{H}_{t,z,n}(\pmb{t_n}) \le |z|^{2\theta} C \big( I_1 + I_2 \big)
\stepcounter{equation}\tag{\theequation}\label{rel1-20230902-8:08pm} 
\end{align*}
where
\begin{align*}
I_1 &:=  C_{\theta} T^{\theta} (t - t_{n})^{- \theta} c_H^n \int_{\bR^n} \prod_{k = 1}^{n} \exp\bigg\{ -\frac{1}{2} \frac{t_{k+1} - t_{k}}{t_{k+1}t_{k}} \Big| \sum_{j=1}^{k} t_{j} \xi_{j} \Big|^2 \bigg\} \big|\xi_{k}\big|^{1-2H} d\pmb{\xi_n}\\
&\le C_{\theta} T^{\theta} (t - t_{n})^{- \theta}  I^{(n)}_{t/2}(\pmb{t_n} / 2)
\end{align*}
and
\begin{align*}
I_2 &:= c_H^n \int_{\bR^n} \prod_{k = 1}^{n} \exp\bigg\{ - \frac{t_{k+1} - t_{k}}{t_{k+1}t_{k}} \Big| \sum_{j=1}^{k} t_{j} \xi_{j} \Big|^2 \bigg\} \big|\xi_{k}\big|^{1-2H} d\pmb{\xi_n}= I^{(n)}_{t}(\pmb{t_n}).
\end{align*}
Taking power $1/(2H_0)$ on \eqref{rel1-20230902-8:08pm}, we obtain:
\begin{align*}
&\Big( \Psi^{H}_{t,z,n}(\pmb{t_n}) \Big)^{\frac{1}{2H_0}}  \le |z|^{\frac{\theta}{H_0}} C^{\frac{1}{2H_0}} \bigg(   
C^{\frac{1}{2H_0}} _{\theta} T^{\frac{\theta}{2H_0}} \Big( (t - t_{n})^{-\theta}I^{(n)}_{t/2}(\pmb{t_n} / 2) \Big)^{\frac{1}{2H_0}} + \Big(I^{(n)}_{t}(\pmb{t_n}) \Big)^{\frac{1}{2H_0}} 
\bigg).
\end{align*}

We estimate separately the two terms in the inequality above. We start with the first summand. Using \eqref{rel1-20231104-2:55pm}, we have
\begin{align*}
&C^{\frac{1}{2H_0}} _{\theta} T^{\frac{\theta}{2H_0}} \Big( (t - t_{n})^{-\theta}I^{(n)}_{t/2}(\pmb{t_n} / 2) \Big)^{\frac{1}{2H_0}} \\
&\qquad \le \Big( C_{\theta} T^{\theta} (C^{(2)})^{n} 2^{n} \Big)^{\frac{1}{2H_0}}
\sum_{\boldsymbol{\alpha} \in D^{(H)}_n} t^{\frac{1+\alpha_n}{4H_0}} \prod_{k = 1}^{n} t^{\widetilde{\alpha}_k} _{k}  
\prod_{k = 1}^{n-1} \big( t_{k+1} - t_{k}\big)^{\widetilde{\beta}_k}
\big( t - t_{n}\big)^{\widetilde{\beta}_{n, \theta}}
\end{align*}
where where $\widetilde{\alpha}_1 , \ldots, \widetilde{\alpha}_n$ are given by \eqref{rel1-20231212}; $\widetilde{\beta}_1, \ldots, \widetilde{\beta}_{n-1}$ are given by \eqref{rel2-20231212} and $\widetilde{\beta}_{n, \theta}$ is defined in \eqref{rel3-20231212}. For the second summand, we use \eqref{relation1-20230831-3:59pm} and take power $\frac{1}{2H_0}$, we obtain:
\begin{align*}
\Big(I^{(n)}_{t}(\pmb{t_n}) \Big)^{\frac{1}{2H_0}} \le C_{H,1}^{\frac{n}{2H_0}} \sum_{\pmb{\alpha_n} \in D_{n}^{(H)}}  t^{\frac{1+\alpha_n}{4H_0}}  \prod_{k=1}^{n} t_k^{\widetilde{\alpha}_k} (t_{k+1} - t_k)^{\widetilde{\beta}_k} 
\end{align*}
where $\widetilde{\alpha}_k$ and $\widetilde{\beta}_k$ are defined the same as in \eqref{relation2-20230901-4:00pm}. 

Putting these estimates for the two terms together, we obtain that for any $0 < t_1 < \ldots < t_n < t_{n+1} = t$, there exists a constant $C$ depending on $a$, $b$, $t_0$, $\theta$ and $T$ such that 
\begin{align*}
&\Big( \Psi^{H}_{t,z,n}(\pmb{t_n}) \Big)^{\frac{1}{2H_0}} \le |z|^{\frac{\theta}{H_0}} C \bigg(   2^{\frac{n}{2H_0}} 
\sum_{\pmb{\alpha_n} \in D_{n}^{(H)}}  t^{\frac{1+\alpha_n}{4H_0}} \prod_{k = 1}^{n} t^{\widetilde{\alpha}_k}_{k}  \prod_{k = 1}^{n-1} \big( t_{k+1} - t_{k}\big)^{\widetilde{\beta}_k}
\big( t - t_{n}\big)^{\widetilde{\beta}_{n, \theta}} \\
&\qquad \qquad \qquad \qquad \qquad \qquad \qquad \qquad +
\sum_{\pmb{\alpha_n} \in D_{n}^{(H)}}  t^{\frac{1+\alpha_n}{4H_0}}  \prod_{k=1}^{n} t_k^{\widetilde{\alpha}_k} (t_{k+1} - t_k)^{\widetilde{\beta}_k} 
\bigg).
\stepcounter{equation}\tag{\theequation}\label{rel1-20231116-2:06pm} 
\end{align*}
A similar estimate holds for arbitrary $(t_1, \ldots, t_n) \in [0, t]^n$ with $t_{\rho(1)} < \ldots < t_{\rho(n)}$ for same permutation $\rho \in S_n$. 

We now integrate $\big( \Psi^{H}_{t,z,n}(\pmb{t_n}) \big)^{\frac{1}{2H_0}}$ over $[0,t]^n$. For this, we use Lemma \ref{lem1-20230823-11:07am}. For the first integral, we use \eqref{rel1-20231210-3:20pm} and for the second integral we use \eqref{relation2-20230901-8:06pm}. Note that, the polynomial function of $t$ in the right side of \eqref{rel1-20231210-3:20pm}, respectively \eqref{relation2-20230901-8:06pm}, is $t^{|\widetilde{\alpha}| + |\widetilde{\beta}| + n} =  t^{n \frac{2H_0 + H - 1}{2H_0} - \frac{1+\alpha_n}{4H_0}}$ and the factor $t^{-\frac{1+\alpha_n}{4H_0}}$ cancels when we multiplying the integral by $t^{\frac{1+\alpha_n}{4H_0}}$, as required when integrating the right hand side of \eqref{rel1-20231116-2:06pm} with respect to $\pmb{t_n}$. Using the same estimates above, we see that
\[
\int_{[0,t]^n}\Big( \Psi^{H}_{t,z,n}(\pmb{t_n}) \Big)^{\frac{1}{2H_0}}  d\pmb{t_n} \leq  |z|^{\frac{\theta}{H_0}} \frac{ \big( C^{(9)} \big)^{n-1} }{[(n-1)!]^{\frac{a-1}{2H_0}}}.
\]
We conclude that
\[
\sum_{n\geq 1}(p-1)^{n/2}\left( \frac{1}{n!}C_n^H(t,z)\right)^{1/2}\leq C |z|^{\theta},
\]
where $C$ is a constant depending on $t_0$, $T$, $p$, $a$, $b$, $c_0$ and $\theta$.

\end{proof}

{\bf Proof of Theorem \ref{main-th2}}: {\em Step 1. (finite dimensional convergence)} In this step, we prove that:
\[
\big(u^{H_n}(t_1,x_1),\ldots,u^{H_n}(t_k,x_k)\big)
\stackrel{d}{\to}
\big(u^{H^*}(t_1,x_1),\ldots,u^{H^*}(t_k,x_k)\big),
\]
for any $(t_1,x_1),\ldots,
(t_k,x_k)\in [0,T] \times \bR$. It is enough to prove that for any $(t,x) \in [0,T] \times \bR$, $u^{H_n}(t,x) \to u^{H^*}(t,x)$ in $L^2(\Omega)$ as $n\to \infty$.

As in the proof of Theorem \ref{main-th1}, we approximate $u^{H}(t,x)$ by the partial sum:
\[
u_m^H(t,x)=1+\sum_{k=1}^m I_{n}^H(f_{t,x,k}).
\]
Then $u_m^{H^{\ast}}(t,x) \to u^{H^{\ast}}(t,x)$ in $L^2(\Omega)$ as $m\to \infty$. Moreover, by Lemma \ref{rough-conv-Ik}, for any $m\geq 1$ fixed,
$u_m^{H_n}(t,x) \to u_m^{H*}(t,x)$ in $L^2(\Omega)$ as $n\to \infty$. So, it remains to prove that 
$$\sup_{n\geq 1}\bE |u_m^{H_n}(t,x)-u^{H_n}(t,x)|^2 \to 0$$ 
as $m\to \infty$. We choose values $a$ and $b$ such that \eqref{def-ab} holds. Since $H_n \to H^*$, there exists $N \in \bN$ such that $a<H_n<b$ for all $n\geq N$. Therefore, it suffices to prove that:
\begin{equation}
\label{rough-unif-conv}
\sup_{H \in [a,b]}\bE |u_m^H(t,x)-u^H(t,x)|^2=\sup_{H \in [a,b]}\sum_{k\geq m+1}\bE|I_k^{H}(f_{t,x,k})|^2 \to 0 \quad \mbox{as $m\to \infty$}.
\end{equation}

To prove \eqref{rough-unif-conv}, we need to estimate $\bE|I_k^{H}(f_{t,x,k})|^2$. Those estimates have been done in \cite{BCY2022}, but we need to revisit them here since we are interested in obtaining a uniform bound in $H$. First, note that by Littlewood-Hardy inequality \eqref{LH-ineq},
\begin{equation}
\label{bound-Ik}
\bE|I_k^{H}(f_{t,x,k})|^2 =k! \, \|\widetilde{f}_{t,x,k} \|_{\cH_{H}^{\otimes k}}^2 \leq k! \, b_{0}^k \left(\int_{[0,t]^k}
A_k^{H}(\pmb{t_k})^{\frac{1}{2H_0}}d\pmb{t_k}\right)^{2H_0},
\end{equation}
where $A_k^{H}(\pmb{t_k})=c_H^k \int_{\bR^k}|\cF \widetilde{f}_{t,x,k}(\pmb{t_k},\bullet)(\pmb{\xi_k})|^2 \prod_{j=1}^{k}|\xi_j|^{1-2H}d\pmb{\xi_k}$.

Next, we would like to find an upper bound for $A_k^{H}(\pmb{t_k})$. For this, using the same arguments as in the proof of Theorem 2.2 of \cite{BL2023} (the study of $A_k^{H}(\pmb{t_k})$), with $\cF G_{t_{\rho(j+1)}-t_{\rho(j)}}(\eta_j)$ replaced by $\cF G_{(t_{\rho(j+1)} - t_{\rho(j)})/t_{\rho(j+1)}t_{\rho(j)}}(\eta_j)$, we obtain:
\begin{align*}
&A_k^{H}(\pmb{t_k}) \leq \frac{c_H^k}{(k!)^2} \Big( \prod_{j = 1}^{k} t_{\rho(j)} \Big)^{2H - 2} 
\sum_{\pmb{\alpha}\in D_k^{(H)}}  \prod_{j=1}^{k} \bigg\{ \int_{\bR}  \exp \Big(   - \frac{t_{\rho(j+1)} - t_{\rho(j)}}{t_{\rho(j+1)}t_{\rho(j)}} | \eta_j |^2  \Big) |\eta_j|^{\alpha_j} d\eta_j \bigg\}.
\end{align*}
where $D_k^{(H)}$ be the set of multi-indices $\pmb{\alpha}=(\alpha_1,\ldots,\alpha_k)$ with $\alpha_j=(1-2H)a_j$ and $\pmb{a}=(a_1,\ldots,a_k) \in A_k$
Using Lemma \ref{lemma2-20220530-4:52}, we have:
\begin{align*}
A_k^{H}(\pmb{t_k}) &\le \frac{c_H^k}{(k!)^2}  \Big( \prod_{j = 1}^{k} t_{\rho(j)} \Big)^{2H - 2} 
\sum_{\pmb{\alpha}\in D_k^{(H)}}  \prod_{j = 1}^{k} \Gamma
\Big(\frac{1+ \alpha_j}{2} \Big) \Big( \frac{t_{\rho(j+1)} - t_{\rho(j)}}{t_{\rho(j+1)}t_{\rho(j)}} \Big)^{- \frac{1 + \alpha_k}{2}}\\
&\le \frac{C_{H,1}^k}{(k!)^2} \sum_{\pmb{\alpha}\in D_k^{(H)}}  t^{\frac{1 + \alpha_n}{2}} t_{\rho(1)}^{\frac{4H - 3 + \alpha_1}{2}} 
\Big( \prod_{j=2}^{k} t_{\rho(j)}^{\frac{4H - 2 + \alpha_{k-1} + \alpha_k }{2}} \Big) 
\prod_{j=1}^{k} (t_{\rho(j+1)} - t_{\rho(j)})^{- \frac{1 + \alpha_k}{2}}
\end{align*}
where for the second inequality we used the constant $C_{H,1}$ defined by \eqref{constantC_H_1}. 

Coming back to \eqref{bound-Ik}, we obtain that:
\begin{align*}
& \bE|I_k^H(f_{t,x,k})|^2 \leq b^2_{H_0} w_{+}^2(t,x) C_{H,1}^k (k!)^{2H_0 - 1} 
\bigg( \sum_{\pmb{\alpha}\in D_k^{(H)}}  t^{\frac{1+\alpha_n}{4H_0}} 
\int_{T_k(t)} \prod_{j=1}^{k} t_{j}^{\widetilde{\alpha}_j} (t_{j+1} - t_{j})^{\widetilde{\beta}_j} d\pmb{t_k} \bigg)^{2H_0}
\end{align*}
where $\widetilde{\alpha}_j$ and $\widetilde{\beta}_j$ for all $j = 1, \ldots, k$ are the same as in \eqref{relation2-20230901-4:00pm}. The last integral is calculated by Lemma \ref{lem1-20230823-11:07am}, the integral is
\begin{align*}
\int_{T_k(t)} \prod_{j=1}^{k} t_{\rho(j)}^{\widetilde{\alpha}_j} (t_{\rho(j+1)} - t_{\rho(j)})^{\widetilde{\beta}_j} d\pmb{t_k}
&= \frac{\Gamma(\widetilde{\alpha}_1 + 1) \prod_{j = 1}^{k}\Gamma(\widetilde{\beta}_j + 1)}{\Gamma \big(\sum_{j=1}^{k}\widetilde{\alpha}_j + \sum_{j=1}^{k}\widetilde{\beta}_j  + k + 1  \big)} \gamma_{k-1} 
t^{\sum_{j=1}^{k}\widetilde{\alpha}_j + \sum_{j=1}^{k}\widetilde{\beta}_j + k}
\end{align*}
Similarly as in \eqref{relation2-20230901-8:06pm}, those factors appears in above can be bounded uniformly in $H \in [a,b]$. There exists a constant $C > 0$ such that 
\begin{align*}
\sup_{H \in [a,b]} \sum_{k\ge m+1} \mathbb{E}\big| I^{H}_k(f_k(\cdot, t,x)) \big|^2  \le \sum_{k\ge m+1} C^k \frac{(t \vee 1)^{k (2H_0 + b - 1)}}{(k!)^{a} }.
\end{align*}
The last term clearly converges to $0$ as $m \to \infty$. 
This concludes the proof of \eqref{rough-unif-conv}.

\medskip

{\em Step 2. (tightness)} The fact that the sequence $(u^{H_n})_{n\geq 1}$ is tight in $C([t_0,T] \times \bR)$ follows by Proposition 2.3 of \cite{yor} using Theorem \ref{rough-unif-mom} above.

\qed

\medskip

\noindent{\em Acknowledgement}. The author is grateful to Raluca Balan for insightful discussions and for help in organizing and enhancing overall quality of this work.




\end{document}